\numberwithin{equation}{section}
\newtheorem{theorem}{Theorem}[section]
\newtheorem{remark}{Remark}[section]
\newtheorem{lemma}{Lemma}[section]
\newtheorem{assumption}{Assumption}
\newtheorem{example}{Example}[section]
\DeclareMathOperator*{\esssup}{ess\,sup}
\begin{document}

\begin{frontmatter}

\title{Maximum Principle for State-Constrained Optimal Control Problems of Volterra Integral Equations having Singular and Nonsingular Kernels\tnoteref{mytitlenote}
}
\tnotetext[mytitlenote]{This research was supported in part by the National Research Foundation of Korea (NRF) Grant funded by the Ministry of Science and ICT, South Korea (NRF-2021R1A2C2094350) and in part by Institute of Information \& communications Technology Planning \& Evaluation (IITP) grant funded by the Korea government (MSIT) (No.2020-0-01373, Artificial Intelligence Graduate School Program (Hanyang University)).}

\author{Jun Moon}
\address{Department of Electrical Engineering, Hanyang University, Seoul 04763, South Korea}
\ead{junmoon@hanyang.ac.kr}

\begin{abstract}
In this paper, we study the optimal control problem with terminal and inequality state constraints for state equations described by Volterra integral equations having singular and nonsingular kernels. The singular kernel introduces abnormal behavior of the state trajectory with respect to the parameter of $\alpha \in (0,1)$. Our state equation is able to cover various state dynamics such as any types of Volterra integral equations with nonsingular kernels only, fractional differential equations (in the sense of Riemann-Liouville or Caputo), and ordinary differential state equations. We obtain the well-posedness (in $L^p$ and $C$ spaces) and precise estimates of the state equation using the generalized Gronwall's inequality and the proper regularities of integrals having singular and nonsingular integrands. We then prove the maximum principle for the corresponding state-constrained optimal control problem. In the derivation of the maximum principle, due the presence of the state constraints and the control space being only a separable metric space, we have to employ the Ekeland variational principle and the spike variation technique, together with the intrinsic properties of distance functions and the generalized Gronwall's inequality, to obtain the desired necessary conditions for optimality. In fact, as the state equation has both singular and nonsingular kernels, the maximum principle of this paper is new, where its proof is more involved than that for the problems of Volterra integral equations studied in the existing literature. Examples are provided to illustrate the theoretical results of this paper.
\end{abstract}
\begin{keyword}
Volterra integral equations, singular and nonsingular kernels, state-constrained optimal control problems, Maximum principle, Ekeland variational principle.
\MSC[2020] 45D05, 45G05, 45G15, 49K21, 49J40 
\end{keyword}

\end{frontmatter}

\tableofcontents

\section{Introduction}\label{Section_1}

In this paper, we consider the optimal control problem of 
\begin{align}	
\label{eq_intro_1_1}
\textbf{(P)}~~J(x_0,u(\cdot)) = \int_0^T l(r,x(r),u(r) \dd r + h(x_0,x(T)),
\end{align}
subject to the following state equation with $\alpha \in (0,1)$,
\begin{align}
\label{eq_intro_1_2}
x(t) = x_0 + \int_0^t \frac{f(t,s,x(s),u(s))}{(t-s)^{1-\alpha}} \dd s + \int_0^t g(t,s,x(s),u(s)) \dd s,~ \textrm{a.e.}~ t \in [0,T],
\end{align}
and the state constraints
\begin{align}
\label{eq_intro_1_3}
\begin{cases}
(x_0, x(T)) \in F, & \textrm{(terminal state constraint)},\\
	G^i(t,x(t)) \leq 0,~\forall t \in [0,T],~ i=1,\ldots,m, & \textrm{(inequality state constraint).} 
\end{cases}
\end{align}
The precise problem statement of \textbf{(P)} including the space of admissible controls and the standing assumptions for (\ref{eq_intro_1_1})-(\ref{eq_intro_1_3}) is given in Section \ref{Section_2_2}. We mention that the optimal control problems with state constraints capture various practical aspects of systems in science, biology, engineering, and economics \cite{Hartl_SICON_1995, Arutyunov_JOTA_2020, Evans_2010, Vinter_book}.

The state equation in (\ref{eq_intro_1_2}) is known as a class of Volterra integral equations. The main feature of Volterra integral equations is the effect of memories, which does not appear in ordinary (state) differential equations. In fact, Volterra integral equations of various kinds have been playing an important role in modeling and analyzing of practical physical, biological, engineering, and other phenomena that are governed by memory effects \cite{Burton_book}. We note that one major distinction between (\ref{eq_intro_1_2}) and other existing Volterra integral equations is that (\ref{eq_intro_1_2}) has two different kernels $\frac{f(t,s,x,u)}{(t-s)^{1-\alpha}}$ and $g(t,s,x,u)$, in which the first kernel $\frac{f(t,s,x,u)}{(t-s)^{1-\alpha}}$ becomes singular at $s=t$, while the second kernel $g(t,s,x,u)$ is nonsingular.  In fact, $\alpha \in (0,1)$ in the singular kernel of (\ref{eq_intro_1_2}) determines the amount of the singularity, in which the large singular behavior occurs with small $\alpha \in (0,1)$. 

Optimal control problems for various kinds of Volterra integral equations via the maximum principle have been studied extensively in the literature; see \cite{Vinokurov_SICON_1969, Angell_JOTA_1976, Kamien_RES_1976, Medhin_JMAA_1988, Carlson_JOTA_1987, Burnap_IMA_Control_1999, Vega_JOTA_2006, Bonnans_Vega_JOTA_2013, Dmitruk_MCRF_2017, Dmitruk_SICON_2014, Bonnans_SVA_2010} and the references therein. Specifically, the first study on optimal control for Volterra integral equations (using the maximum principle) can be traced back to \cite{Vinokurov_SICON_1969}. Several different formulations (with/without state constraints, with/without delay, with/without additional equality and/or inequality constraints) of optimal control for Volterra integral equations and their generalizations are reported in  \cite{Angell_JOTA_1976, Kamien_RES_1976, Medhin_JMAA_1988, Carlson_JOTA_1987, Burnap_IMA_Control_1999, Vega_JOTA_2006, Belbas_AMC_2007, Bonnans_SVA_2010}. Some recent progress in different directions including the stochastic framework can be found in \cite{Bonnans_Vega_JOTA_2013, Dmitruk_MCRF_2017, Dmitruk_SICON_2014, Wang_ESAIM_2018, Hamaguchi_Arvix_2021}. We note that the above-mentioned existing works considered the situation with nonsingular kernels only in Volterra integral equations, which corresponds to $f \equiv 0$ in (\ref{eq_intro_1_2}). Hence, the problem settings in the earlier works can be viewed as a special case of \textbf{(P)}.

Recently, the optimal control problem for Volterra integral equations having singular kernels only (equivalently, $g \equiv 0$ in (\ref{eq_intro_1_2})) was studied in \cite{Lin_Yong_SICON_2020}. Due to the presence of the singular kernel, the technical analysis including the maximum principle (without state constraints) in \cite{Lin_Yong_SICON_2020} should be different from that of the existing works mentioned above. In particular, the proof for the well-posedness and estimates of Volterra integral equations in \cite[Theorem 3.1]{Lin_Yong_SICON_2020} require a new type of the Gronwall's inequality. Furthermore, the maximum principle (without state constraints) in \cite[Theorem 4.3]{Lin_Yong_SICON_2020} needs a different duality analysis for variational and adjoint integral equations, induced by the variational approach. More recently, linear-quadratic optimal control problem (without state constraints) for linear Volterra integral equations with singular kernels only was studied in \cite{Han_Arxiv_2021}.

We note that Volterra integral equations having singular and nonsingular kernels are strongly related to classical state equations and fractional order differential equations in the sense of Riemann-Liouville or Caputo \cite{Kilbas_book}. For the case with singular kernels only, a similar argument is given in \cite[Section 3.2]{Lin_Yong_SICON_2020}. In particular, let $\mathcal{D}_{\alpha} ^C [x(\cdot)]$ be the fractional derivative operator of order $\alpha \in (0,1)$ in the sense of Caputo \cite[Chapter 2.4]{Kilbas_book}. Then applying \cite[Theorem 3.24 and Corollary 3.23]{Kilbas_book} to (\ref{eq_intro_1_2}) yields
\begin{subequations}
\begin{align}
\label{eq_intro_1_4}
\mathcal{D}_{\alpha}^C [x(\cdot)](t) = f(t,x(t),u(t)) & ~\Leftrightarrow~ x(t) = x_0 + \frac{1}{\Gamma(\alpha)} \int_0^t \frac{f(s,x(t),u(s))}{(t-s)^{1-\alpha}} \dd s,~ \textrm{a.e.}~ t \in [0,T], \\
\label{eq_intro_1_5}
\frac{ \dd x(t)}{\dd t} = g(t,x(t),u(t)) & ~\Leftrightarrow~ x(t) = x_0 + \int_0^t g(s,x(s),u(s)) \dd s,~ \textrm{a.e.}~ t \in [0,T],
\end{align}
\end{subequations}
where $\Gamma(\cdot)$ is the gamma function. Note that while (\ref{eq_intro_1_4}) is a class of fractional differential equations in the sense of Caputo, (\ref{eq_intro_1_5}) is a classical ordinary differential equation.
Instead of $\mathcal{D}_{\alpha}^C [x(\cdot)]$ in (\ref{eq_intro_1_4}), we may use the fractional derivative of order $\alpha \in (0,1)$ in the sense of Riemann-Liouville \cite[Chapter 2.1 and Theorem 3.1]{Kilbas_book}. Hence, we observe that (\ref{eq_intro_1_4}) and (\ref{eq_intro_1_5}) are special cases of our state equation in (\ref{eq_intro_1_2}). This implies that the state equation in (\ref{eq_intro_1_2}) is able to describe various types of differential equations including combinations of fractional (in Riemann-Liouville- or Caputo-type) and ordinary differential state equations. We also mention that there are several different results on optimal control for fractional differential equations; see \cite{Agrawal_ND_2004, Bourdin_arvix_2012, Kamocki_AMC_2014,  Gomoyunov_SICON_2020} and the references therein.

The aim of this paper is to study the optimal control problem stated in \textbf{(P)}. As noted above, since (\ref{eq_intro_1_2}) has both singular and nonsingular kernels, when $f \equiv 0$, (\ref{eq_intro_1_2}) is reduced to the Volterra integral equation with singular kernels only studied in \cite{Lin_Yong_SICON_2020}. Since \cite{Lin_Yong_SICON_2020} did not consider the state-constrained control problem, \textbf{(P)} can be viewed as a generalization of \cite{Lin_Yong_SICON_2020} to the state-constrained control problem for Volterra integral equations having singular and nonsingular kernels. Moreover, with $g \equiv 0$, (\ref{eq_intro_1_2}) is reduced to the classical Volterra integral equation with nonsingular kernels only (e.g. \cite{Dmitruk_MCRF_2017, Dmitruk_SICON_2014, Burnap_IMA_Control_1999, Medhin_JMAA_1988, Kamien_RES_1976, Bonnans_SVA_2010}). Hence, \textbf{(P)} also covers the optimal control problems for Volterra integral equations with nonsingular kernels only.

Under mild assumptions on $f$ and $g$, we first obtain the well-posedness (in $L^p$ and $C$ spaces) and precise estimates for generalized Volterra integral equations of (\ref{eq_intro_1_2}) when the initial condition of (\ref{eq_intro_1_2}) also depends on $t$ (see Lemma \ref{Lemma_2_1} and Appendix \ref{Appendix_B}). This requires the extensive use of the generalized Gronwall's inequality with singular and nonsingular kernels, together with the several different regularities of integrals having singular and nonsingular integrands, where their results (including the generalized Gronwall's inequality) are obtained in Appendix \ref{Appendix_A}. Note that the main technical analysis for the well-posedness and estimates of (\ref{eq_intro_1_2}) (see Lemma \ref{Lemma_2_1} and Appendix \ref{Appendix_B}) should be different from those for the case with singular kernels only in \cite{Lin_Yong_SICON_2020}, as the presence of the singular and nonsingular kernels in (\ref{eq_intro_1_2}) causes various cross coupling characteristics.

Next, we obtain the maximum principle for \textbf{(P)} (see Theorem \ref{Theorem_3_1}). Due the presence of the state constraints in (\ref{eq_intro_1_3}) and the control space being only a separable metric space (that does not necessarily have any algebraic structure), the derivation of the maximum principle in this paper must be different from that for the unconstrained case with singular kernels only studied in \cite[Theorem 4.3]{Lin_Yong_SICON_2020}. Specifically, we have to employ the Ekeland variational principle and the spike variation technique, together with the intrinsic properties of distance functions and the generalized Gronwall's inequality (see Appendix \ref{Appendix_A}), to establish the duality analysis for Volterra-type variational and adjoint equations, which leads to the desired necessary conditions for optimality. 
Furthermore, as (\ref{eq_intro_1_2}) has both singular and nonsingular kernels, the proof for the maximum principle of this paper should be more involved than that for the classical state-constrained maximum principle without singular kernels studied in the existing literature (e.g. \cite[Theorem 1]{Bonnans_SVA_2010} and  \cite{Dmitruk_MCRF_2017, Dmitruk_SICON_2014, Burnap_IMA_Control_1999, Medhin_JMAA_1988, Kamien_RES_1976}). In fact, the analysis of the maximum principle for state-constrained optimal control problems is entirely different from that of the problems without state constraints \cite{Hartl_SICON_1995, Bourdin_arxiv_2016}. We also note that different from existing works for classical optimal control of Volterra integral equations (e.g. \cite{Dmitruk_MCRF_2017, Dmitruk_SICON_2014, Burnap_IMA_Control_1999, Medhin_JMAA_1988, Kamien_RES_1976, Bonnans_SVA_2010}), our paper does not assume the differentiability of (singular and nonsingular) kernels in $(t,s,u)$ (time and control variables) and the convexity of the control space.

The rest of this paper is organized as follows. The notation and the problem statement of \textbf{(P)} are given in Section \ref{Section_2}. The statement of the maximum principle for \textbf{(P)} is provided in Section \ref{Section_3}. Some examples of \textbf{(P)} are studied in Section \ref{Section_5}. The proof of the maximum principle for \textbf{(P)} is given in Section \ref{Section_4}. Appendices \ref{Appendix_A}-\ref{Appendix_D} give some preliminary results and lemmas including the well-posedness and estimates of (\ref{eq_intro_1_2}).

\section{Notation and Problem Formulation}\label{Section_2}


\subsection{Notation}\label{Section_2_1}

Let $\mathbb{R}_+$ and $\mathbb{R}_-$ be the sets of nonnegative and nonpositive numbers, respectively. Let $\mathbb{R}^n$ be the $n$-dimensional Euclidean space, where $\langle x,y \rangle_{\mathbb{R}^n \times \mathbb{R}^n} := x^\top y$ is the inner product and $|x|_{\mathbb{R}^n} := \langle x,x \rangle^{1/2}_{\mathbb{R}^n \times \mathbb{R}^n}$ is the norm for $x,y \in \mathbb{R}^n$. We sometimes write $\langle \cdot,\cdot \rangle$ and $|\cdot|$ when there is no confusion. For $A \in \mathbb{R}^{m \times n}$, $A^\top$ denotes the transpose of $A$. Let $I_{n}$ be an $n \times n$ identity matrix.
Let $\Delta := \{(t,s) \in [0,T] \times [0,T]~|~ 0 \leq s \leq t \leq T \}$ with $T > 0$ being a fixed horizon. Define $\mathds{1}_{A}(\cdot)$ by the indicator function of any set $A$. A modulus of continuity is any increasing real-valued function $\omega :[0,\infty) \rightarrow [0,\infty)$, vanishing at $0$, i.e., $\lim_{t \downarrow 0} \omega(t) = 0$, and continuous at $0$. In this paper, the constant $C$ denotes the generic constant, whose value is different from line to line.

For any differentiable function $f:\mathbb{R}^n \rightarrow \mathbb{R}^l$, let $f_x : \mathbb{R}^n \rightarrow \mathbb{R}^{l \times n}$ be the partial derivative of $f$ with respect to $x \in \mathbb{R}^n$. Note that $f_x =  \begin{bmatrix}
	f_{1,x}^\top & \cdots & f_{l,x}^\top \end{bmatrix}^\top$ with $f_{j,x} \in  \mathbb{R}^{1 \times n}$, and when $l=1$, $f_x \in \mathbb{R}^{1 \times n}$. For any differentiable function $f : \mathbb{R}^n \times \mathbb{R}^l \rightarrow \mathbb{R}^l$, $f_{x} : \mathbb{R}^n \times \mathbb{R}^l \rightarrow \mathbb{R}^{l \times n}$ for $x \in \mathbb{R}^n$, and $f_y: \mathbb{R}^n \times \mathbb{R}^l \rightarrow \mathbb{R}^{l \times l}$ for $y \in \mathbb{R}^l$. 
	
	

For $1 \leq p < \infty$, we define the following spaces:
\begin{itemize}
\item $L^p([0,T];\mathbb{R}^n)$: the space of functions $\psi:[0,T] \rightarrow \mathbb{R}^n$ such that $\psi$ is measurable and satisfies $\|\psi(\cdot)\|_{L^p([0,T];\mathbb{R}^n)} := \Bigl ( \int _0^T |\psi(t)|^p_{\mathbb{R}^n} \dd t \Bigr )^{1/p}$;
\item $L^{\infty}([0,T];\mathbb{R}^n)$: the space of functions $\psi:[0,T] \rightarrow \mathbb{R}^n$ such that $\psi$ is measurable and satisfies $\|\psi(\cdot)\|_{L^{\infty}([0,T];\mathbb{R}^n)} :=  \esssup_{t \in [0,T]} |\psi(t)|_{\mathbb{R}^n} < \infty$;
\item $C([0,T];\mathbb{R}^n)$: the space of functions $\psi:[0,T] \rightarrow \mathbb{R}^n$ such that $\psi$ is continuous and satisfies $\|\psi(\cdot)\|_{\infty} := \sup_{t \in [0,T]} |\psi(t)|_{\mathbb{R}^n} < \infty $;
\item $\textsc{BV}([0,T];\mathbb{R}^n)$: the space of functions $\psi:[0,T] \rightarrow \mathbb{R}^n$ such that $\psi$ is a function with bounded variation on $[0,T]$.
\end{itemize}
The norm on $\textsc{BV}([0,T];\mathbb{R}^n)$ is defined by $\|\psi(\cdot)\|_{\textsc{BV}([0,T];\mathbb{R}^n)} := \psi(0) + \textsc{TV}(\psi)$, where $\textsc{TV}(\psi) := \sup_{(t_k)_k} \bigl \{ \sum_{k} |\psi(t_{k+1}) - \psi(t_k)|_{\mathbb{R}^n} \bigr \} < \infty$ with the supremum being taken by all partitions of $[0,T]$. Let $\textsc{NBV}([0,T];\mathbb{R}^n)$ be the space of functions $\psi(\cdot) \in  \textsc{BV}([0,T];\mathbb{R}^n)$ such that $\psi(\cdot) \in  \textsc{BV}([0,T];\mathbb{R}^n)$ is normalized, i.e., $\psi(0) = 0$ and $\psi$ is left continuous. The norm on $\textsc{NBV}([0,T];\mathbb{R}^n)$ is defined by $\|\psi(\cdot)\|_{\textsc{NBV}([0,T];\mathbb{R}^n)} := \textsc{TV}(\psi)$. When $\psi(\cdot) \in \textsc{NBV}([0,T];\mathbb{R})$ is monotonically nondecreasing, we have $\|\psi(\cdot)\|_{\textsc{NBV}([0,T];\mathbb{R}} = \psi(T)$. Note that both $(\textsc{BV}([0,T];\mathbb{R}^n), \|\cdot\|_{\textsc{BV}([0,T];\mathbb{R}^n)})$ and $(\textsc{NBV}([0,T];\mathbb{R}^n), \|\cdot\|_{\textsc{NBV}([0,T];\mathbb{R}^n)})$ are Banach spaces. 

\subsection{Problem Formulation}\label{Section_2_2}

\begin{figure}
\centering
\includegraphics[scale=0.35]{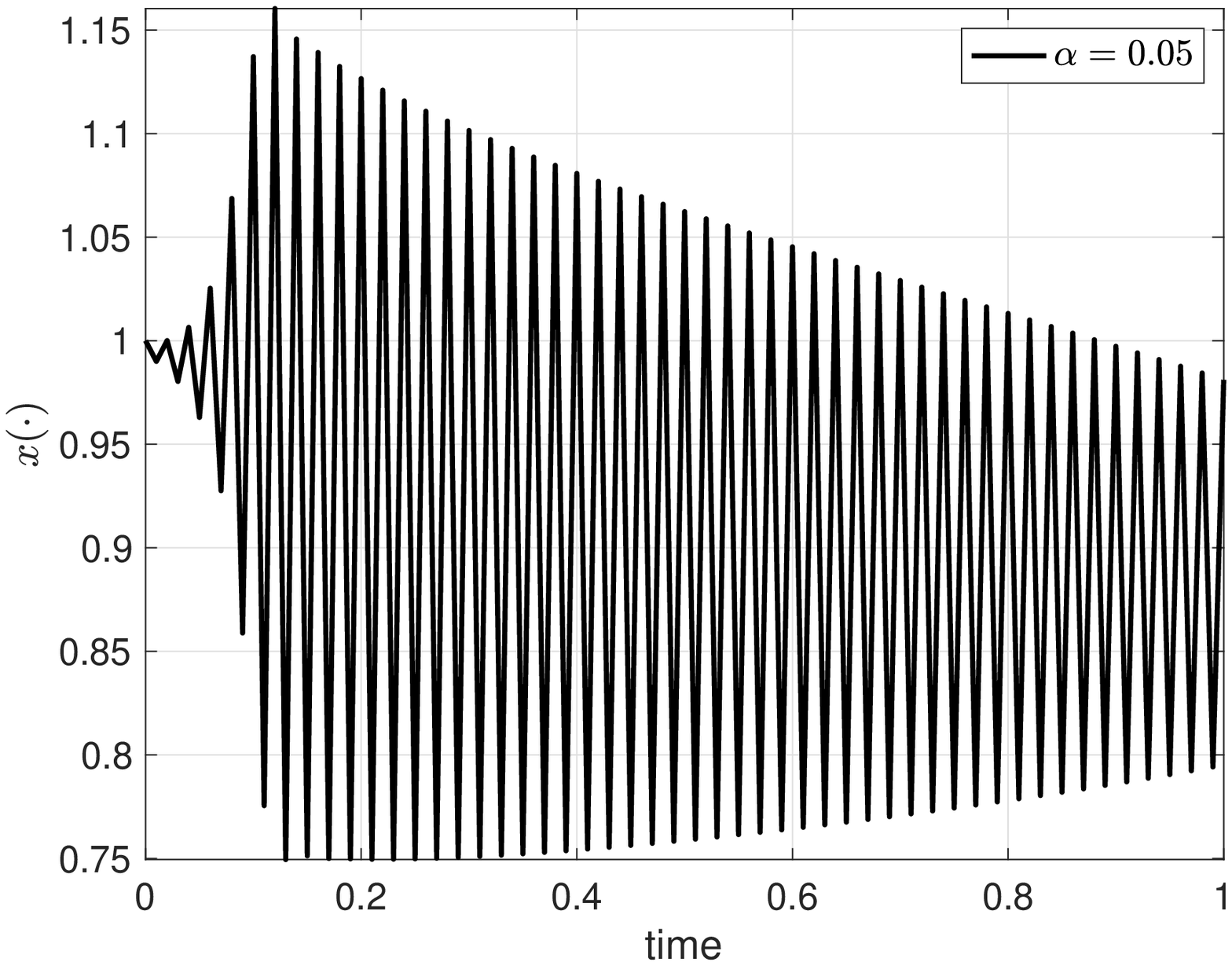}~~~
\includegraphics[scale=0.35]{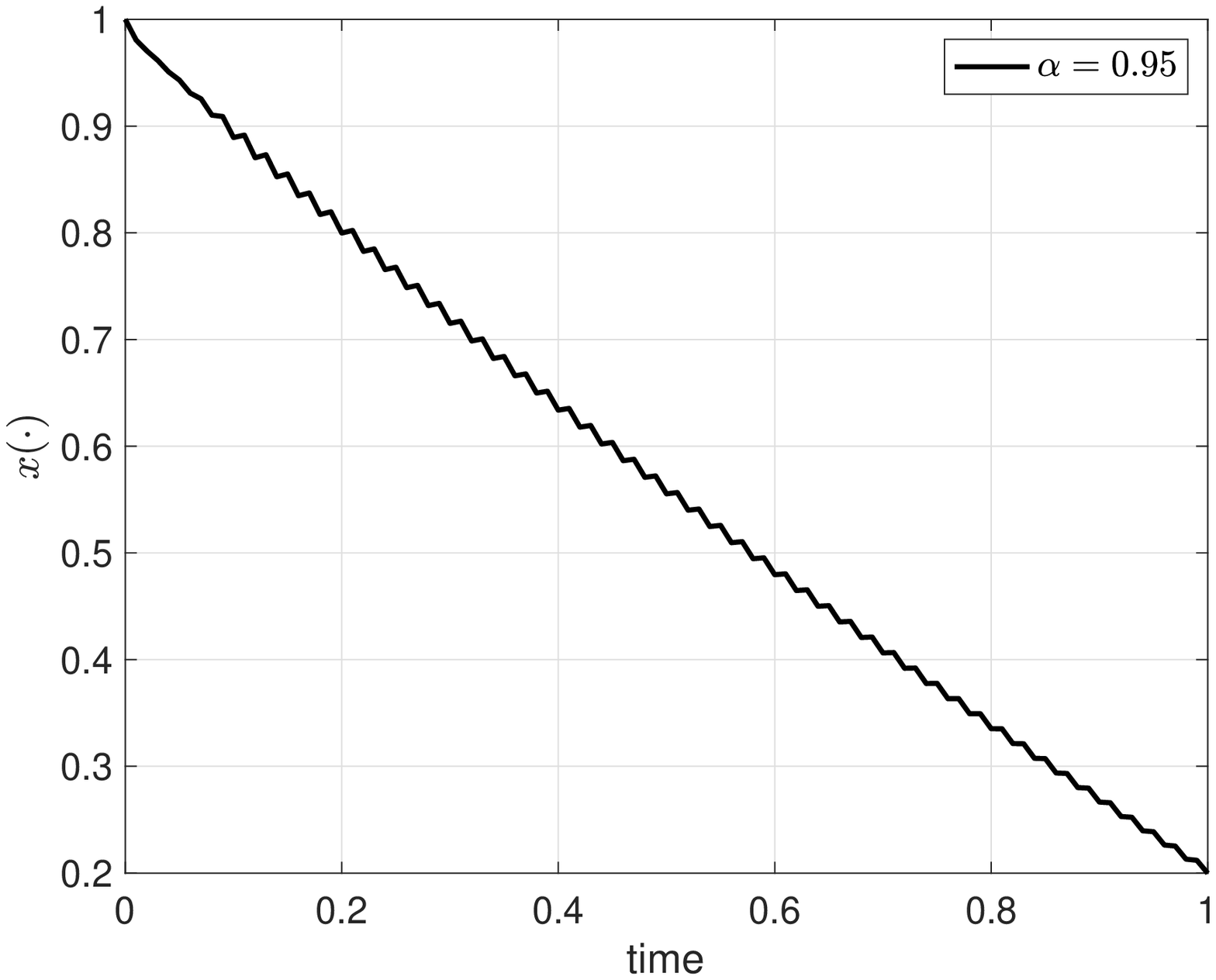}	
\caption{State trajectories when $x_0 = 1$, $f(t,s,x,u) = -0.4 \sin(2\pi x)$, and $g(t,s,x,u) = -x$. Note that the state trajectory shows more singular behavior with small $\alpha \in (0,1)$.}
\label{Fig_1_1_1_1}
\end{figure}

Consider the following Volterra integral equation:
\begin{align}
\label{eq_1}
x(t) = x_0 + \int_0^t \frac{f(t,s,x(s),u(s))}{(t-s)^{1-\alpha}} \dd s + \int_0^t g(t,s,x(s),u(s)) \dd s,~ \textrm{a.e.}~ t \in [0,T],
\end{align}
where $\alpha \in (0,1)$ is the parameter of singularity, $x(\cdot) \in \mathbb{R}^n$ is the state with the initial condition $x_0 \in \mathbb{R}^n$, and $u(\cdot) \in U \subset \mathbb{R}^d$ is the control with $U$ being the control space. In (\ref{eq_1}), $\frac{f(t,s,x,u)}{(t-s)^{1-\alpha}}$ is the singular kernel (with the singularity appearing at $s=t$) and $g(t,s,x,u)$ is the nonsingular kernel, where $f,g:\Delta \times \mathbb{R}^n \times U \rightarrow \mathbb{R}^n$ are generators. We note that $\alpha \in (0,1)$ determines the level of singularity of (\ref{eq_1}); see Figure \ref{Fig_1_1_1_1}. Notice also that $f$ and $g$ are dependent on two time parameters, $t$ and $s$, where their roles are different. While $t$ is the outer time variable to determine the current time, $s$ is the inner time variable describing the path or memory of the state equation from $0$ to $t$. We sometimes use the notation $x(\cdot;x_0,u) := x(\cdot)$ to emphasize the dependence on the initial state and the control.

\begin{assumption}\label{Assumption_2_1}
\begin{enumerate}[(i)]
\item $(U,\rho)$ is a separable metric space, where $U \subset \mathbb{R}^d$ and $\rho$ is the metric induced by the standard Euclidean norm $|\cdot|_{\mathbb{R}^d}$;
\item There is a constant $K \geq 0$ such that for some modulus of continuity $\omega$,
\begin{align*}
\begin{cases}
	|f(t,s,x,u) -  f(t^\prime,s,x,u)| + |g(t,s,x,u) -  g(t^\prime,s,x,u)| \leq K \omega(|t-t^\prime|)(1+|x|), \\
		~~~~~~~~~~ \forall (t,s),(t^\prime,s) \in \Delta, ~x \in \mathbb{R}^n,~ u \in U; 
\end{cases}	
\end{align*}
\item For $p > \frac{1}{\alpha}$, there are nonnegative functions $K_0(\cdot) \in L^{ \frac{1}{\alpha} +}([0,T];\mathbb{R})$ and $K(\cdot) \in L^{ \frac{p}{\alpha p - 1} +}([0,T];\mathbb{R})$, where $L^{p+}([0,T];\mathbb{R}^n) := \cup_{r > p} L^{r}([0,T];\mathbb{R}^n)$ for $1 \leq p < \infty$, such that
\begin{align*}
\begin{cases}
	|f(t,s,x,u) - f(t,s,x^\prime,u^\prime)| + |g(t,s,x,u) - g(t,s,x^\prime,u^\prime)| \leq K(s) (|x-x^\prime| + \rho(u,u^\prime)) , \\
	~~~~~~~~~~ \forall (t,s) \in \Delta,~ x,x^\prime \in \mathbb{R}^n,~ u,u^\prime \in U, \\
	|f(t,s,0,u)| + |g(t,s,0,u)| \leq K_0(s),~ \forall (t,s) \in \Delta,~ u \in U;
\end{cases}	
\end{align*}
\item $f$ and $g$ are of class $C^1$ (continuously differentiable) in $x$, which are bounded and continuous in $(x,u) \in \mathbb{R}^n \times U$.
\end{enumerate}		
\end{assumption}

For $p \geq 1$ and $u_0 \in U$, the space of admissible controls for (\ref{eq_1}) is defined by
\begin{align*}
\mathcal{U}^p[0,T] = \Bigl \{u:[0,T] \rightarrow U~|~ \textrm{$u$ is measurable in $t \in [0,T]$} ~\&~ \rho(u(\cdot),u_0) \in L^p([0,T];\mathbb{R}_+) \Bigr \}
\end{align*}
We state the following lemma; the proof is provided in Appendix \ref{Appendix_B} (see Lemmas \ref{Lemma_B_1} and \ref{Lemma_B_2}).

\begin{lemma}\label{Lemma_2_1}
Let (i)-(iii) of Assumption \ref{Assumption_2_1} hold. Then the following results hold:
\begin{enumerate}[(i)]
\item For any $(x_0,u(\cdot)) \in \mathbb{R}^n \times \mathcal{U}^p[0,T]$, (\ref{eq_1}) admits a unique solution in $C([0,T];\mathbb{R}^n)$, i.e., $x(\cdot;x_0,u) \in C([0,T];\mathbb{R}^n)$, and there is a constant $C \geq 0$ such that 
\begin{align*}
\Bigl \|x (\cdot;x_0,u) \Bigr \|_{L^p([0,T];\mathbb{R}^n)} \leq C \Bigl (1 + |x_0|_{\mathbb{R}^n} + \Bigl \|\rho(u(\cdot),u_0) \Bigr \|_{L^p([0,T];\mathbb{R}_+)} \Bigr )	;
\end{align*}	
\item For any $x_0,x_0^\prime \in \mathbb{R}^n$ and $u(\cdot),u^\prime (\cdot) \in \mathcal{U}^p[0,T]$, there is a constant $C \geq 0$ such that
\begin{align*}
& \Bigl \|x(\cdot;x_0,u) - 	x(\cdot;x_0^\prime,u^\prime) \Bigr \|_{L^p([0,T];\mathbb{R}^n)}  \leq C  |x_0 - x_0^\prime |_{\mathbb{R}^n} \\
&~~~~~ + C \Biggl [ \int_0^T \Bigl ( \int_0^t \frac{|f(t,s,x(s;x_0,u),u(s)) - f(t,s,x(s;x_0,u),u^\prime(s))|}{(t-s)^{1-\alpha}} \dd s \Bigr )^p \dd t \Biggr]^{\frac{1}{p}} \\
&~~~~~ + C \Biggl [ \int_0^T \Bigl ( \int_0^t | g(t,s,x(s;x_0,u),u(s)) - g(t,s,x(s;x_0,u),u^\prime(s)) | \dd s \Bigr)^p  \dd t \Biggr ]^{\frac{1}{p}}.
\end{align*}
\end{enumerate}
\end{lemma}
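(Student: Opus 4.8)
The plan is to recast (\ref{eq_1}) as a fixed-point equation $x = \mathcal{T}[x]$ for the Volterra operator
\begin{align*}
\mathcal{T}[x](t) := x_0 + \int_0^t \frac{f(t,s,x(s),u(s))}{(t-s)^{1-\alpha}} \dd s + \int_0^t g(t,s,x(s),u(s)) \dd s ,
\end{align*}
and to control the singular and nonsingular kernels \emph{simultaneously} by means of the generalized Gronwall inequality of Appendix \ref{Appendix_A}. For the a priori bound in (i), I would first split $f(t,s,x,u) = [f(t,s,x,u) - f(t,s,0,u)] + f(t,s,0,u)$, and likewise for $g$, and invoke Assumption \ref{Assumption_2_1}(iii) to obtain the pointwise growth estimate $|f(t,s,x,u)| + |g(t,s,x,u)| \leq K_0(s) + K(s)|x|$. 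Substituting this into (\ref{eq_1}) yields
\begin{align*}
|x(t)| \leq |x_0| + \int_0^t \frac{K_0(s) + K(s) |x(s)|}{(t-s)^{1-\alpha}} \dd s + \int_0^t \bigl( K_0(s) + K(s) |x(s)| \bigr) \dd s .
\end{align*}
The inhomogeneous $K_0$-terms are finite and $L^p$-bounded: by H\"older's inequality $(t-\cdot)^{\alpha-1} \in L^{q}([0,t])$ for any $q < \frac{1}{1-\alpha}$, whose conjugate exponent exceeds $\frac{1}{\alpha}$ and is therefore dominated by the integrability $K_0 \in L^{1/\alpha +}$; this is exactly the singular-integral regularity recorded in Appendix \ref{Appendix_A}. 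Taking $L^p$ norms and absorbing the remaining $K(s)|x(s)|$ terms through the $L^p$ form of the generalized Gronwall inequality (whose applicability is guaranteed by $p > 1/\alpha$ together with $K \in L^{p/(\alpha p - 1)+}$) then produces the claimed bound $\|x(\cdot;x_0,u)\|_{L^p} \leq C(1 + |x_0| + \|\rho(u(\cdot),u_0)\|_{L^p})$.

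For existence and uniqueness I would run the Picard iteration $x^{(0)} \equiv x_0$, $x^{(k+1)} = \mathcal{T}[x^{(k)}]$. The Lipschitz bound of Assumption \ref{Assumption_2_1}(iii) gives, for successive differences,
\begin{align*}
|x^{(k+1)}(t) - x^{(k)}(t)| \leq \int_0^t \frac{K(s) |x^{(k)}(s) - x^{(k-1)}(s)|}{(t-s)^{1-\alpha}} \dd s + \int_0^t K(s) |x^{(k)}(s) - x^{(k-1)}(s)| \dd s .
\end{align*}
Iterating this inequality, the singular Volterra kernel composes into Mittag-Leffler-type kernels with factorial decay, so the generalized Gronwall inequality of Appendix \ref{Appendix_A} furnishes summable constants; hence $(x^{(k)})_k$ is Cauchy in $L^p([0,T];\mathbb{R}^n)$ and converges to a fixed point, and uniqueness follows by applying the same inequality to the difference of two solutions. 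To upgrade the fixed point from $L^p$ to $C([0,T];\mathbb{R}^n)$, I would invoke Assumption \ref{Assumption_2_1}(ii): the modulus-of-continuity bound in the outer variable $t$, combined with the continuity of the singular Volterra integral operator established in Appendix \ref{Appendix_A}, shows that $t \mapsto \int_0^t (t-s)^{\alpha-1} f(t,s,x(s),u(s)) \dd s$ is continuous, whence $\mathcal{T}[x] \in C([0,T];\mathbb{R}^n)$ and the solution inherits continuity.

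For part (ii), I would subtract the integral equations for $x(\cdot;x_0,u)$ and $x(\cdot;x_0',u')$ and insert the intermediate quantities $f(t,s,x(s;x_0,u),u'(s))$ and $g(t,s,x(s;x_0,u),u'(s))$, thereby splitting each kernel difference into a part that is Lipschitz in the state and a part that records only the control discrepancy. The state-Lipschitz parts, dominated by $K(s)$, are again absorbed by the generalized Gronwall inequality, while the $|x_0 - x_0'|$ contribution and the two explicit control-difference integrals survive on the right-hand side, producing precisely the stated inequality.

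The main obstacle throughout is the singular kernel $(t-s)^{\alpha-1}$: the classical Gronwall lemma and the classical sup-norm contraction both fail, so every closing step must instead rest on the generalized (singular) Gronwall inequality, and even the well-posedness of $\mathcal{T}$ (finiteness, $L^p$-boundedness, and $t$-continuity of the singular integral) hinges on the delicate H\"older bookkeeping matching the exponents $\frac{1}{\alpha}$ for $K_0$ and $\frac{p}{\alpha p - 1}$ for $K$ against the local integrability threshold $\frac{1}{1-\alpha}$ of $(t-s)^{\alpha-1}$. A secondary difficulty, emphasized in the text, is the cross-coupling of the singular and nonsingular kernels, which forces the two integral terms to be estimated jointly, rather than separately, at each stage of the argument.
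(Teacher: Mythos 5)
Your treatment of the two estimates and of the continuity upgrade coincides with the paper's own proof: in Appendix \ref{Appendix_B} (Lemma \ref{Lemma_B_1}), the bound in (i) and the stability bound in (ii) are obtained exactly as you describe, by moving the $K_0$-terms and the control-discrepancy integrals into an inhomogeneity $b(\cdot)$ and absorbing the terms $K(s)|x(s)|$ (resp.\ $K(s)|x(s;x_0,u)-x(s;x_0',u')|$) with the generalized Gronwall inequality of Lemma \ref{Lemma_A_5}, after the same exponent bookkeeping you sketch (choosing an auxiliary $q>\frac{1}{\alpha}$ with $K\in L^q$ and $z,b\in L^{q/(q-1)}$); continuity is then Lemma \ref{Lemma_B_2}, resting on Lemmas \ref{Lemma_A_3} and \ref{Lemma_A_4}, as in your last step of part (i). Where you genuinely diverge is existence and uniqueness: the paper does not use Picard iteration but a contraction-mapping argument for the operator $\mathcal{F}$ on $L^p([0,\tau];\mathbb{R}^n)$ with $\tau$ small (via the Young-type estimate of Lemma \ref{Lemma_A_2}, with separate exponent cases depending on how $p$ compares to $\frac{1}{1-\alpha}$), followed by induction across the subintervals $[\tau,2\tau],\ldots$; the payoff of that route is that only a one-step bound is needed, with no control of iterated kernels, and Remark \ref{Remark_B_2} notes that the Picard scheme then comes for free as the contraction iterates.

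Your Picard route can be made to work, but one step as written is off: the successive-difference relation $|x^{(k+1)}-x^{(k)}| \le \mathcal{K}\,|x^{(k)}-x^{(k-1)}|$ is a recursion across \emph{different} functions, whereas Lemma \ref{Lemma_A_5} is a self-referential inequality in a single unknown $z$; it cannot be invoked to give "summable constants" for the recursion. What you actually need are the iterated-kernel composition estimates that live inside the proof of Lemma \ref{Lemma_A_5} (H\"older against $K\in L^q$ plus Beta-function identities, with the singularity exponent improving by $\alpha-\frac{1}{p}$ per composition until the kernel is bounded, after which factorial decay gives summability of $\sum_k\|\mathcal{K}^k z_0\|_{L^p}$). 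Alternatively, prove uniqueness by applying Lemma \ref{Lemma_A_5} to the difference of two solutions, where it does apply directly with $b\equiv 0$, and get existence either from those composition estimates or from the paper's short-interval contraction. With that repair, your proposal is a correct, slightly different packaging of the same machinery.
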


We introduce the following objective functional:
\begin{align}
\label{eq_2}
J(x_0,u(\cdot)) = \int_0^T l(r,x(r),u(r) \dd r + h(x_0,x(T)).
\end{align}
Then the main objective of this paper is to solve the following optimal control problem:
\begin{align*}
\textbf{(P)}~ \inf_{u(\cdot) \in \mathcal{U}^p[0,T]} J(x_0,u(\cdot)),~\textrm{subject to (\ref{eq_1}),}
\end{align*}
and the state constraints given by
\begin{align}
\label{eq_3}
\begin{cases}
(x_0, x(T;x_0,u)) \in F, & \textrm{(terminal state constraint)},\\
	G^i(t,x(t;x_0,u)) \leq 0,~\forall t \in [0,T],~ i=1,\ldots,m, & \textrm{(inequality state constraint).} 
\end{cases}
\end{align}

\begin{assumption}\label{Assumption_2_2}
	\begin{enumerate}[(i)]
	\item $l:[0,T] \times \mathbb{R}^n \times U \rightarrow \mathbb{R}$ is continuous in $t \in [0,T]$, and is of class $C^1$ in $x$, which is bounded and continuous in $(x,u) \in \mathbb{R}^n \times U$. Moreover, there is a constant $K \geq 0$ such that
	\begin{align*}
	\begin{cases}
	|l(s,x,u) - l(s,x^\prime,u^\prime)| \leq K (|x-x^\prime| + \rho(u,u^\prime)) ,~ \forall s \in [0,T],~ x,x^\prime \in \mathbb{R}^n,~ u,u^\prime \in U, \\
	|l(s,0,u)| \leq K,~ \forall s \in [0,T],~ u \in U;
	\end{cases}	
	\end{align*}
	\item $h : \mathbb{R}^n \times \mathbb{R}^n \rightarrow \mathbb{R}$ is of class $C^1$ in both variables, which are bounded. Let $h_{x}$ and $h_{x_0}$ be partial derivatives of $h$ with respect to $x$ and $x_0$, respectively. Moreover, there is a constant $K \geq 0$ such that
	\begin{align*}
	|h(x_0,x) - h(x_0^\prime,x^\prime)| \leq K ( |x_0 - x_0^\prime |	+ |x^\prime - x^\prime | ),~ \forall (x_0,x),(x_0^\prime,x^\prime) \in \mathbb{R}^n \times \mathbb{R}^n;
	\end{align*}
	\item $F$ is a nonempty closed convex subset of $\mathbb{R}^{2n}$;
	\item For $i=1,\ldots,m$, $G^i:[0,T] \times \mathbb{R}^n \rightarrow \mathbb{R}$ is continuous in $t \in [0,T]$ and is of class $C^1$ in $x$, which is bounded in both variables.
	\end{enumerate}
\end{assumption}

Under Assumptions \ref{Assumption_2_1} and \ref{Assumption_2_2}, the main objective of this paper is to derive the Pontryagin-type maximum principle for \textbf{(P)}, which constitutes the necessary conditions for optimality. Note that Assumptions \ref{Assumption_2_1} and \ref{Assumption_2_2} are crucial for the well-posedness of the state equation in (\ref{eq_1}) by Lemma \ref{Lemma_2_1} (see also Appendix \ref{Appendix_B}) as well as the maximum principle of \textbf{(P)}. Assumptions similar to Assumptions \ref{Assumption_2_1} and \ref{Assumption_2_2} have been used in various optimal control problems and their maximum principles; see \cite{Yong_book, Li_Yong_book, Lin_Yong_SICON_2020, Bettiol_CVOC_2021, Bourdin_arxiv_2016, Bonnans_SVA_2010, Carlson_JOTA_1987, Vega_JOTA_2006, Moon_Automatica_2020_1, Dmitruk_MCRF_2017, Dmitruk_SICON_2014, Bonnans_Vega_JOTA_2013, Vinokurov_SICON_1969, Bourdin_MP_2020, Hamaguchi_Arvix_2021} and the references therein.

\section{Statement of the Maximum Principle}\label{Section_3}

We provide the statement of the maximum principles for \textbf{(P)}. The proof is given in Section \ref{Section_4}. 

\begin{theorem}\label{Theorem_3_1}
	Let Assumptions \ref{Assumption_2_1} and \ref{Assumption_2_2} hold. Suppose that $(\overline{u}(\cdot), \overline{x}(\cdot)) \in \mathcal{U}^p[0,T] \times C([0,T];\mathbb{R}^n)$ is the optimal pair for \textbf{(P)}, i.e., $\overline{u}(\cdot) \in \mathcal{U}^p[0,T]$ and the optimal solution to \textbf{(P)}, where $\overline{x}(\cdot;\overline{x}_0,\overline{u}) := \overline{x}(\cdot) \in C([0,T];\mathbb{R}^n)$ is the corresponding optimal state trajectory of (\ref{eq_1}). Then there exists the tuple $(\lambda,\xi,\theta_1,\ldots,\theta_m)$, where $\lambda \in \mathbb{R}$, $\xi \in \mathbb{R}^{2n}$ with $(\xi_1,\xi_2) \in \mathbb{R}^n \times \mathbb{R}^n$, and $\theta(\cdot) := (\theta_1(\cdot),\ldots,\theta_m(\cdot)) \in \textsc{NBV}([0,T];\mathbb{R}^m)$ with $\theta_i(\cdot) \in  \textsc{NBV}([0,T];\mathbb{R})$ for $i=1,\ldots,m$, such that the following conditions are satisfied:
\begin{itemize}
	\item Nontriviality condition: the tuple $(\lambda,\xi,\theta_1(\cdot),\ldots,\theta_m(\cdot))$ is not trivial, i.e., it holds that \\ $(\lambda,\xi,\theta_1(\cdot),\ldots,\theta_m(\cdot)) \neq 0$, where
	\begin{align*}
	\begin{cases}
	\lambda \geq  0, \\
	\xi = \begin{bmatrix}
 	\xi_1 \\
 	\xi_2
 \end{bmatrix}  \in N_F \Bigl (\begin{bmatrix}
	\overline{x}_0 \\
	\overline{x}(T)
\end{bmatrix} \Bigr ), \\
	\theta_i(\cdot)	\in \textsc{NBV}([0,T];\mathbb{R})~ \textrm{with}~  \|\theta_i(\cdot)\|_{\textsc{NBV}([0,T];\mathbb{R})} = \theta_i(T) \geq 0,~\forall i=1,\ldots,m,
	\end{cases}
	\end{align*}
	with $N_F(x)$ being the normal cone to the convex set $F$ defined in (\ref{eq_4_1}), and $\theta_i(\cdot) \in  \textsc{NBV}([0,T];\mathbb{R})$, $i=1,\ldots,m$, being finite, nonnegative, and monotonically nondecreasing on $[0,T]$;
\item Nonnegativity condition:
	\begin{align*}
	\begin{cases}
	\lambda \geq 0, \\
	\dd \theta_i(s) \geq 0,~ \forall s \in [0,T],~i=1, \ldots, m,
	\end{cases}
	\end{align*}
	where $\dd \theta_i$ denotes the Lebesgue-Stieltjes measure on $[0,T]$ corresponding to $\theta_i$, $i=1,\ldots,m$;
	\item Adjoint equation: there exists a nontrivial $p(\cdot) \in L^p([0,T];\mathbb{R}^n)$ such that $p$ is the unique solution to the following backward Volterra integral equation having singular and nonsingular kernels:
\begin{align*}
	p(t) & = \int_t^T \frac{f_x(r,t,\overline{x}(t),\overline{u}(t))^\top}{(r-t)^{1-\alpha}} p(r) \dd r - \mathds{1}_{[0,T)} (t) \frac{f_x(T,t,\overline{x}(t),\overline{u}(t))^\top}{(T-t)^{1-\alpha}} \Bigl ( \lambda h_x(\overline{x}_0,\overline{x}(T)) + \xi_2^\top \Bigr )^\top \\
&~~~ + \int_t^T g_x(r,t,\overline{x}(t),\overline{u}(t))^\top p(r) \dd r	 - g_x(T,t,\overline{x}(t),\overline{u}(t))^\top \Bigl ( \lambda h_x(\overline{x}_0,\overline{x}(T)) + \xi_2^\top \Bigr )^\top \\
&~~~ - \lambda l_x(t,\overline{x}(t),\overline{u}(t))^\top  - \sum_{i=1}^m G_x^{i}(t,\overline{x}(t))^\top \frac{\dd \theta_i(t)}{\dd t},~  \textrm{a.e.}~ t \in [0,T];
\end{align*}
	\item Transversality condition:
	\begin{align*}	
	0 & \leq 	\Bigl \langle \xi_1, \overline{x}_0 - y_1 \Bigr \rangle_{\mathbb{R}^n \times \mathbb{R}^n} + \Bigl \langle \xi_2, \overline{x}(T) - y_2 \Bigr \rangle_{\mathbb{R}^n \times \mathbb{R}^n},~ \forall y = \begin{bmatrix}
			 y_1 \\
			 y_2
		\end{bmatrix} \in F, \\
	\int_0^T p(t) \dd t &= \xi_1 + \xi_2 + \lambda h_{x_0}(\overline{x}_0,\overline{x}(T))^\top  +\lambda h_x(\overline{x}_0,\overline{x}(T))^\top;
	\end{align*}
	\item Complementary slackness condition:
	\begin{align*}	
	\int_0^T G^i(t,\overline{x}(t;\overline{x}_0,\overline{u})) \dd \theta_i(t) = 0,~ \forall i=1,\ldots,m,
	\end{align*}
	which is equivalent to
	\begin{align*}
	\textsc{supp}(\dd \theta_i(\cdot)) \subset \{ t \in [0,T]~|~ G^i(t,\overline{x}(t;\overline{x}_0,\overline{u})= 0\},~ \forall i=1,\ldots,m,	
	\end{align*}
	where $\textsc{supp}(\dd \theta_i(\cdot))$ denotes the support of the measure $\dd \theta_i$, $i=1,\ldots,m$;
	\item Hamiltonian-like maximum condition: 
	\begin{align*}
&\int_t^T p(r)^\top  \frac{f(r,t,\overline{x}(t),\overline{u}(t))}{(r-t)^{1-\alpha}} \dd r - \mathds{1}_{[0,T)}(t) \Bigl ( \lambda h_x(\overline{x}_0,\overline{x}(T)) + \xi_2^\top \Bigr ) \frac{f(T,t,\overline{x}(t),\overline{u}(t))}{(T-t)^{1-\alpha}} \\
&~~~ + \int_t^T p(r)^\top  g(r,t,\overline{x}(t),\overline{u}(t)) \dd r - \Bigl ( \lambda h_x(\overline{x}_0,\overline{x}(T)) + \xi_2^\top \Bigr )  g(T,t,\overline{x}(t),\overline{u}(t)) \\
&~~~ - \lambda l(t,\overline{x}(t),\overline{u}(t)) \\
& = \max_{u \in U} \Biggl \{ \int_t^T p(r)^\top  \frac{f(r,t,\overline{x}(t),u)}{(r-t)^{1-\alpha}} \dd r - \mathds{1}_{[0,T)}(t) \Bigl ( \lambda h_x(\overline{x}_0,\overline{x}(T)) + \xi_2^\top \Bigr ) \frac{f(T,t,\overline{x}(t),u)}{(T-t)^{1-\alpha}} \\
&~~~ + \int_t^T p(r)^\top  g(r,t,\overline{x}(t),u) \dd r - \Bigl ( \lambda h_x(\overline{x}_0,\overline{x}(T)) + \xi_2^\top \Bigr )  g(T,t,\overline{x}(t),u)  \\
&~~~ - \lambda l(t,\overline{x}(t),u) \Biggr \},~\textrm{a.e. $ t \in [0,T]$.}
\end{align*}
	\end{itemize}
\end{theorem}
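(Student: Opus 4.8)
The plan is to follow the Ekeland variational principle combined with spike (needle) variations, since the control space $(U,\rho)$ is only a separable metric space and admits no linear or convex structure permitting classical convex perturbations. First I would introduce a penalized cost functional that aggregates the deviation of the cost from its optimal value together with the constraint violations measured through distance functions: for $\varepsilon > 0$, set
\begin{align*}
J_\varepsilon(u(\cdot)) := \Bigl [ \bigl ( (J(\overline{x}_0,u) - J(\overline{x}_0,\overline{u}) + \varepsilon)^+ \bigr)^2 + d_F(\overline{x}_0, x(T;\overline{x}_0,u))^2 + \sum_{i=1}^m d_{\mathcal{C}_-}(G^i(\cdot,x(\cdot;\overline{x}_0,u)))^2 \Bigr ]^{1/2},
\end{align*}
where $d_F$ is the Euclidean distance to the closed convex set $F$ and $d_{\mathcal{C}_-}$ is the distance in $C([0,T];\mathbb{R})$ to the cone of nonpositive continuous functions. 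Using the continuity estimate in Lemma \ref{Lemma_2_1}(ii), $J_\varepsilon$ is continuous on the complete metric space $(\mathcal{U}^p[0,T], \overline{d})$ equipped with the Ekeland metric $\overline{d}(u,v) := |\{t \in [0,T]~:~u(t) \neq v(t)\}|$, and $J_\varepsilon(\overline{u}) = \varepsilon$ because $\overline{u}$ is feasible and optimal. Ekeland's variational principle then yields an approximate minimizer $u^\varepsilon(\cdot)$ with $\overline{d}(u^\varepsilon,\overline{u}) \leq \sqrt{\varepsilon}$ satisfying $J_\varepsilon(u^\varepsilon) \leq J_\varepsilon(u) + \sqrt{\varepsilon}\,\overline{d}(u,u^\varepsilon)$ for all admissible $u$.

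Next, for each $\varepsilon$ I would apply a spike variation of $u^\varepsilon$ supported on a set of measure $r$ with control value $v \in U$, insert it into the Ekeland inequality, divide by $r$, and let $r \downarrow 0$. The required first-order expansion of the state is governed by the variational equation, itself a linear Volterra integral equation with singular and nonsingular kernels,
\begin{align*}
z(t) = \int_0^t \frac{f_x(t,s,\overline{x}(s),u^\varepsilon(s)) z(s)}{(t-s)^{1-\alpha}} \dd s + \int_0^t g_x(t,s,\overline{x}(s),u^\varepsilon(s)) z(s) \dd s + \zeta(t),
\end{align*}
whose well-posedness and $L^p$/$C$ estimates follow from Lemma \ref{Lemma_2_1} applied to the linearized generators, with $\zeta$ collecting the spike source terms. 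The normalized multipliers $(\lambda^\varepsilon,\xi^\varepsilon,\theta^\varepsilon)$ arise from the subdifferentials of the squared terms of $J_\varepsilon$: $\lambda^\varepsilon \geq 0$ from the cost, $\xi^\varepsilon$ from the projection onto $F$ (hence $\xi^\varepsilon \in N_F$), and $\theta^\varepsilon \in \textsc{NBV}([0,T];\mathbb{R}^m)$ nonnegative and nondecreasing from the subdifferential of $d_{\mathcal{C}_-}$, which lives in the dual of $C([0,T];\mathbb{R})$, i.e., the space of finite measures. By construction these satisfy $(\lambda^\varepsilon)^2 + |\xi^\varepsilon|^2 + \sum_{i=1}^m \|\theta_i^\varepsilon\|^2 = 1$.

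The central construction is then the adjoint equation, introduced as the dual of the variational equation so that the duality pairing transforms the variational inequality into the pointwise Hamiltonian-like maximum condition. This requires a Fubini-type interchange of the order of integration in the double integrals containing the singular integrand $\tfrac{1}{(t-s)^{1-\alpha}}$, justified by the integrability and regularity results of Appendix \ref{Appendix_A}; the interchange produces the backward singular kernel $\tfrac{f_x(r,t,\cdot)^\top}{(r-t)^{1-\alpha}}$ together with the boundary contributions at $r = T$, namely the terms $\mathds{1}_{[0,T)}(t)\tfrac{f_x(T,t,\cdot)^\top}{(T-t)^{1-\alpha}}(\lambda h_x + \xi_2^\top)^\top$ and $g_x(T,t,\cdot)^\top(\lambda h_x + \xi_2^\top)^\top$ appearing in the stated adjoint equation. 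I expect this duality identity to be the main obstacle, because the singular kernel is coupled with the nonsingular one and both must be handled simultaneously while keeping track of the measure-valued multiplier $\theta_i^\varepsilon$ in the running-constraint term $G_x^i(t,\cdot)^\top \,\dd\theta_i/\dd t$.

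Finally, I would pass to the limit as $\varepsilon \downarrow 0$. The Ekeland estimate $\overline{d}(u^\varepsilon,\overline{u}) \to 0$ together with the continuity of Lemma \ref{Lemma_2_1} gives $x^\varepsilon \to \overline{x}$ uniformly; the finite-dimensional multipliers $(\lambda^\varepsilon,\xi^\varepsilon)$ converge along a subsequence, and the measures $\dd\theta^\varepsilon$ converge weakly-$*$ in $\textsc{NBV}$ by Helly's selection theorem (Banach--Alaoglu on the dual of the separable space $C$). The normalization is preserved in the limit, yielding the nontriviality condition, while weak-$*$ convergence of $\dd\theta_i^\varepsilon$ together with the support properties of the distance subdifferential yields the nonnegativity and complementary slackness conditions. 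Passing the adjoint equations and the variational inequality to the limit then produces the adjoint equation, the transversality condition, and the Hamiltonian-like maximum condition, completing the proof.
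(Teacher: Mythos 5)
Your overall strategy (penalization, Ekeland's principle on a complete metric space with the distance $\overline{d}$, spike variations, duality via a backward Volterra adjoint equation, then passage to the limit) is indeed the paper's approach. However, there is a genuine gap in your setup: you fix the initial state at $\overline{x}_0$ and apply Ekeland's principle only over $\mathcal{U}^p[0,T]$, perturbing nothing but the control. In the paper the problem data ($h(x_0,x(T))$ and the joint constraint $(x_0,x(T)) \in F$) make $x_0$ a decision variable, so Ekeland is applied on the product space $\bigl(\mathbb{R}^n \times \mathcal{U}^p[0,T], \widehat{d}\bigr)$ with $\widehat{d}\bigl((x_0,u),(\tilde{x}_0,\tilde{u})\bigr) = |x_0 - \tilde{x}_0| + \overline{d}(u,\tilde{u})$, and the variations combine a spike in the control with an initial-state perturbation $x_0^\epsilon + \delta a$, $a \in \mathbb{R}^n$. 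The resulting variational inequality holds for every direction $a$, and testing it with both $a$ and $-a$ (after choosing $y = (\overline{x}_0,\overline{x}(T))$ in $F$) is precisely what yields the transversality identity $\int_0^T p(t)\,\dd t = \xi_1 + \xi_2 + \lambda h_{x_0}(\overline{x}_0,\overline{x}(T))^\top + \lambda h_x(\overline{x}_0,\overline{x}(T))^\top$, which in turn gives the nontriviality of the adjoint variable $p$. With your control-only variations the inequality is available only at $a = 0$, so this identity (and the role of $\xi_1$ and $h_{x_0}$) cannot be recovered; your scheme would at best prove the theorem with the $x_0$-related conclusions deleted.

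Two further points need repair. First, your claim that ``the normalization is preserved in the limit'' is not automatic: under weak--$*$ convergence the dual norm is only lower semicontinuous, so from $(\lambda^\varepsilon)^2 + |\xi^\varepsilon|^2 + \|\mu^\varepsilon\|^2 = 1$ one can a priori conclude only $\lambda^2 + |\xi|^2 + \|\mu\|^2 \leq 1$, and mass of $\mu^\varepsilon$ can escape, leaving all limit multipliers zero. The paper closes this hole with an interior-point argument: since $S = C([0,T];\mathbb{R}_-^m)$ has nonempty interior, there exist $z' \in S$ and $\sigma > 0$ with $\sigma \|\mu^{\epsilon_k}\|_{C([0,T];\mathbb{R}^m)^*} \leq \langle \mu^{\epsilon_k}, \gamma(x^{\epsilon_k}(\cdot)) - z' \rangle$, and the right-hand side passes to the limit, which is what rules out simultaneous vanishing. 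Second, to differentiate the penalized functional you need more than nonemptiness of the subdifferential of the distance to the cone: the paper first renorms $C([0,T];\mathbb{R}^m)$ so that the dual norm is strictly convex (Remark \ref{Remark_4_2_3_5_2_3_2_1}), which forces $\partial d_S$ to be a singleton and gives strict Hadamard differentiability of $d_S^2$ off $S$ (Lemma \ref{Lemma_4_3_1_2_2_1}); without this (or an equivalent device) the chain-rule expansion you invoke when dividing the Ekeland inequality by $r$ and letting $r \downarrow 0$ is not justified.
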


Several important remarks are given below.

\begin{remark}\label{Remark_3_3}
The adjoint equation $p$ in Theorem \ref{Theorem_3_1} includes the (strong or distributional (or weak)) derivative of $\theta$, which is expressed as $\frac{\dd \theta_i(t)}{\dd t}$, $i=1,\ldots,m$. Notice that $\theta_i$, $i=1,\ldots,m$, are finite and monotonically nondecreasing by Theorem \ref{Theorem_3_1}, where their corresponding Lebesgue-Stieltjes measures, denoted by $\dd \theta_i$, $i=1,\ldots,m$, are nonnegative, i.e., $\dd \theta_i(s) \geq 0$, for $s \in [0,T]$ and $i=1,\ldots,m$. In fact, $([0,T],\mathcal{B}([0,T]))$, where $\mathcal{B}$ is the Borel $\sigma$-algebra generated by subintervals of $[0,T]$, is a measurable space on which the two nonnegative measures $\dd \theta_i$ and $\dd t $ are defined. Then we can easily see that $\dd \theta_i \ll \dd t$, i.e., $\dd \theta_i$ is absolutely continuous with respect to $\dd t$. That is, $\dd \theta_i(B) = 0$ whenever $\dd t(B) =  0$ for $B \in \mathcal{B}([0,T])$ and $i=1,\ldots,m$ \cite[Appendix C]{Conway_2000_book}. By the Radon-Nikodym theorem (see \cite[Appendix C]{Conway_2000_book}), this implies that there is a unique Radon-Nikodym derivative $\Theta_i(\cdot) \in L^1([0,T];\mathbb{R})$, $i=1,\ldots,m$, such that 
\begin{align*}
\frac{\dd \theta_i(t)}{\dd t} = \Theta_i(t)~\Leftrightarrow~  	\theta_i(t) = \int_0^t \Theta_i(s) \dd s,~ \forall i=1,\ldots,m,~  \textrm{a.e.}~ t\in [0,T].
\end{align*}
Hence, with the Radon-Nikodym derivative $\Theta_i(\cdot)$, $i=1,\ldots,m$, the adjoint equation $p$ in Theorem \ref{Theorem_3_1} can be written as
\begin{align}
\label{eq_3_1}
	p(t) & = \int_t^T \frac{f_x(r,t,\overline{x}(t),\overline{u}(t))^\top}{(r-t)^{1-\alpha}} p(r) \dd r - \mathds{1}_{[0,T)} (t) \frac{f_x(T,t,\overline{x}(t),\overline{u}(t))^\top}{(T-t)^{1-\alpha}} \Bigl ( \lambda h_x(\overline{x}_0,\overline{x}(T)) + \xi_2^\top \Bigr )^\top  \\
&~~~ + \int_t^T g_x(r,t,\overline{x}(t),\overline{u}(t))^\top p(r) \dd r	 - g_x(T,t,\overline{x}(t),\overline{u}(t))^\top \Bigl ( \lambda h_x(\overline{x}_0,\overline{x}(T)) + \xi_2^\top \Bigr )^\top \nonumber \\
&~~~ - \lambda l_x(t,\overline{x}(t),\overline{u}(t))^\top  - \sum_{i=1}^m G_x^{i}(t,\overline{x}(t))^\top \Theta_i(t),~  \textrm{a.e.}~ t \in [0,T]. \nonumber
\end{align}
Note that the well-posedness (existence and uniqueness of the solution) of the adjoint equation in (\ref{eq_3_1}) follows from Theorem \ref{Theorem_3_1} (see also Lemma \ref{Lemma_B_5} in Appendix \ref{Appendix_B}). 
\end{remark}

\begin{remark}
The strategy of the proof for Theorem \ref{Theorem_3_1} is based on the Ekeland variational principle. Moreover, as $U$ is only the (separable) metric space and does not have any algebraic structure, the spike variation technique has to be employed. In contrast to other classical approaches, our proof needs to deal with the Volterra-type variational and adjoint equations having singular and nonsingular kernels in the variational analysis. 
\end{remark}

\begin{remark}\label{Remark_3_2}
The nontrivial tuple $(\lambda, \xi, \dd \theta_1,\ldots,\dd \theta_m, p)$ is a Lagrange multiplier, which is said to be normal when $\lambda > 0$ and abnormal when $\lambda = 0$. In the normal case, we may assume the Lagrange multiplier to have been normalized so that $\lambda =1$.
\end{remark}

\begin{remark}
The necessary conditions in Theorem \ref{Theorem_3_1} are of interest only when the terminal state constraint is nondegenerate in the sense that $G_x^i(t,\overline{x}(t))^\top \neq 0$ whenever $G^i(t,\overline{x}(t)) = 0$ for all $t \in [0,T]$ and $i=1,\ldots,m$. 
A similar remark is given in \cite[page 330, Remarks (b)]{Vinter_book} for the classical state-constrained optimal control problem for ordinary state equations. 
\end{remark}

\begin{remark}\label{Remark_3_4}
Without the state constraints in (\ref{eq_3}), Theorem \ref{Theorem_3_1} holds with $\lambda = 1$, $\xi = 0$, and $\theta = 0$. This is equivalent to the following statement (see also \cite[Theorem 4.3]{Lin_Yong_SICON_2020} for the case with singular kernels only): If $(\overline{u}(\cdot), \overline{x}(\cdot)) \in \mathcal{U}^p[0,T] \times C([0,T];\mathbb{R}^n)$ is the optimal pair for \textbf{(P)}, then the following conditions hold:
\begin{itemize}
\item Adjoint equation:	$p(\cdot) \in L^p([0,T];\mathbb{R}^n)$ is the unique solution of the following backward Volterra integral equation having singular and nonsingular kernels:
\begin{align*}
	p(t) & = \int_t^T \frac{f_x(r,t,\overline{x}(t),\overline{u}(t))^\top}{(r-t)^{1-\alpha}} p(r) \dd r - \mathds{1}_{[0,T)} (t) \frac{f_x(T,t,\overline{x}(t),\overline{u}(t))^\top}{(T-t)^{1-\alpha}}  h_x(\overline{x}_0,\overline{x}(T))^\top \\
&~~~ + \int_t^T g_x(r,t,\overline{x}(t),\overline{u}(t))^\top p(r) \dd r	 - g_x(T,t,\overline{x}(t),\overline{u}(t))^\top  h_x(\overline{x}_0,\overline{x}(T))^\top  \\
&~~~ - l_x(t,\overline{x}(t),\overline{u}(t))^\top,~  \textrm{a.e.}~ t \in [0,T];
\end{align*}
\item Hamiltonian-like maximum condition: 
	\begin{align*}
&\int_t^T p(r)^\top  \frac{f(r,t,\overline{x}(t),\overline{u}(t))}{(r-t)^{1-\alpha}} \dd r - \mathds{1}_{[0,T)}(t)  h_x(\overline{x}_0,\overline{x}(T))  \frac{f(T,t,\overline{x}(t),\overline{u}(t))}{(T-t)^{1-\alpha}} \\
&+ \int_t^T p(r)^\top  g(r,t,\overline{x}(t),\overline{u}(t)) \dd r - h_x(\overline{x}_0,\overline{x}(T))   g(T,t,\overline{x}(t),\overline{u}(t))   -  l(t,\overline{x}(t),\overline{u}(t)) \\
& = \max_{u \in U} \Biggl \{ \int_t^T p(r)^\top  \frac{f(r,t,\overline{x}(t),u)}{(r-t)^{1-\alpha}} \dd r - \mathds{1}_{[0,T)}(t) h_x(\overline{x}_0,\overline{x}(T))  \frac{f(T,t,\overline{x}(t),u)}{(T-t)^{1-\alpha}} \\
&~~~ + \int_t^T p(r)^\top  g(r,t,\overline{x}(t),u) \dd r - h_x(\overline{x}_0,\overline{x}(T))  g(T,t,\overline{x}(t),u)   -  l(t,\overline{x}(t),u) \Biggr \},~\textrm{a.e. $ t \in [0,T]$.}
\end{align*}
\end{itemize}
\end{remark}

\begin{remark}
By taking  $f \equiv 0$ in Theorem \ref{Theorem_3_1}, we can obtain the maximum principle for classical Volterra integral equations with nonsingular kernels only. Note that Theorem \ref{Theorem_3_1} is different from the classical maximum principles for Volterra integral equations with nonsingular kernels only studied in the existing literature   (e.g. \cite[Theorem 1]{Bonnans_SVA_2010} and \cite{Dmitruk_MCRF_2017, Dmitruk_SICON_2014, Medhin_JMAA_1988}),  where Theorem \ref{Theorem_3_1} does not need differentiability of kernels with respect to time variables and the adjoint equation in Theorem \ref{Theorem_3_1} is expressed by the integral form. 
\end{remark}

\section{Examples}\label{Section_5}  

In this section, we provide two examples of \textbf{(P)}.

\begin{example}	\label{Example_1}
\normalfont
Consider the minimization of the following objective functional
\begin{align*}
J(x_0,u(\cdot)) = \int_0^3 [ x(s) + \frac{1}{2} u(s)^2 ] \dd s + (x_0 + x(3)),	
\end{align*}
subject to the Volterra integral equation with singular and nonsingular kernels given by
\begin{align}
\label{eq_s_4_1}
x(t) = x_0 + \int_0^t \frac{u(s)}{(t-s)^{1-\alpha}} \dd s	+ \int_0^t u(s) \dd s,~\textrm{a.e. $ t \in [0,3]$,}
\end{align}
and the state constraints
\begin{align}
\label{eq_s_4_2}
\begin{cases}
	(x_0,x(3)) \in F=\{10\} \times \{-16\}, & \textrm{(terminal state constraint)}, \\
G(t,x(t)) = - x(t) - \Bigl (\frac{t^2}{5} + 20 \Bigr ) \leq 0,~ \forall t \in [0,3], & \textrm{(inequality state constraint)}.
\end{cases}
\end{align}
We assume that the control space $U$ is an appropriate sufficiently large compact subset of $\mathbb{R}^d$ to satisfy Assumption \ref{Assumption_2_2}.

Note that $F$ is singleton, which is closed and convex. Hence, by (\ref{eq_s_4_2}), we can choose $\xi = 0$. This implies that the (candidate) optimal state trajectory holds $\overline{x}_0 = 10$ and $\overline{x}(3) = -10$. In addition, the transversality condition leads to $\int_0^3 p(t) \dd t = 2 \lambda$.  Assume by contradiction that $\lambda = 0$. Then the adjoint equation holds that $p(t) = \frac{\dd \theta(t)}{\dd t}$. This implies $\int_0^3 p(t) \dd t = \int_0^3 \dd \theta(t) =  \theta(3)  - \theta(0) =  \theta(3) = 0$, which, by the fact that $\theta(0) = 0$ and $\theta$ is monotonically nondecreasing, contradicts the nontriviality condition of $\theta$ as well as the adjoint equation $p$ in Theorem \ref{Theorem_3_1}. Therefore, $\lambda \neq 0$, and we may take the normalized case with $\lambda = 1$. Based on the preceding discussion and by Theorem \ref{Theorem_3_1}, the following conditions hold:
\begin{itemize}
\item Nontriviality and nonnegativity conditions:
\begin{itemize}
\item $\lambda = 1$ and $\theta(\cdot)	\in \textsc{NBV}([0,3];\mathbb{R})$ with $\|\theta(\cdot)\|_{\textsc{NBV}([0,3];\mathbb{R})} = \theta(3) \geq 0$, $\theta$ being finite and monotonically nondecreasing on $[0,3]$, and $\dd \theta(t) \geq 0$ for $t \in [0,3]$;
\end{itemize}
\item Adjoint equation:
\begin{align}
\label{eq_s_4_3}
p(t) = -  1 + \frac{\dd \theta(t)}{\dd t},~  \textrm{a.e.}~ t \in [0,3];
\end{align}
\item Transversality condition: 
\begin{align}
\label{eq_s_4_4}
\int_0^3 p(t) \dd t = \int_0^3 \Bigl [ -1 + \frac{ \dd \theta(t)}{ \dd t} \Bigr ] \dd t = -3 + \theta(3) = 2~ \Rightarrow~ \theta(3) = 5 > 0;
\end{align}
\item Complementary slackness condition:
\begin{align}	
\label{eq_s_4_5}
	\int_0^3 \Bigl [ - \overline{x}(t) - \Bigl (\frac{t^2}{5} + 20 \Bigr ) \Bigr ] \dd \theta(t) = 0	;
\end{align}
\item Hamiltonian-like maximum condition: the first-order optimality condition implies
\begin{align}
\label{eq_s_4_6}
\overline{u}(t)	= - 1 + \int_t^3 p(r) \dd r - \frac{\mathds{1}_{[0,3)}(t)}{(3-t)^{1-\alpha}} + \int_t^3  \frac{p(r)}{(r-t)^{1-\alpha}} \dd r,~\textrm{a.e. $ t \in [0,3]$.}
\end{align}
\end{itemize}

The numerical simulation results of Example \ref{Example_1} with $\alpha = 0.8$  and $\alpha = 0.5$ are given in Figures \ref{Fig_1} and \ref{Fig_11111}. One can easily observe that for each case, the optimal state trajectory holds the terminal condition as well as the inequality constraint in (\ref{eq_s_4_2}). In addition, $\theta(\cdot) \in \textsc{NBV}([0,3];\mathbb{R})$, where $\theta$ is finite and monotonically nondecreasing on $[0,3]$ and $\dd \theta(t) \geq 0$ for $t \in [0,3]$, and the adjoint equation holds $p(\cdot) \in L^p([0,3];\mathbb{R})$. The (candidate) optimal solution is obtained from the Hamiltonian-like maximum condition in (\ref{eq_s_4_6}). Note that the numerical approach that we adopt is as follows:
\begin{enumerate}
\setlength{\itemindent}{0.2in}
\item[(s.1)] Given $\theta(0) = 0$ and $\theta(3) > 0$, provide a guess of the measure $\dd \theta$ and then construct $\theta$;
\item[(s.2)] Compute the adjoint equation in (\ref{eq_s_4_3});
\item[(s.3)] Compute the optimal solution in (\ref{eq_s_4_6});
\item[(s.4)] Compute the controlled state equation in (\ref{eq_s_4_1}) under the optimal solution (\ref{eq_s_4_6}), which needs to satisfy the terminal and inequality constraints in (\ref{eq_s_4_2}); 
\item[(s.5)] Check the complementary slackness condition in (\ref{eq_s_4_5}) and the transversality condition in (\ref{eq_s_4_4});
\item[(s.6)] If the constraints and conditions in (s.4) and (s.5) hold, stop the algorithm. Otherwise, we iterate (s.1)-(s.5).
\end{enumerate}
\begin{figure}[t]
\centering
\includegraphics[scale=0.28]{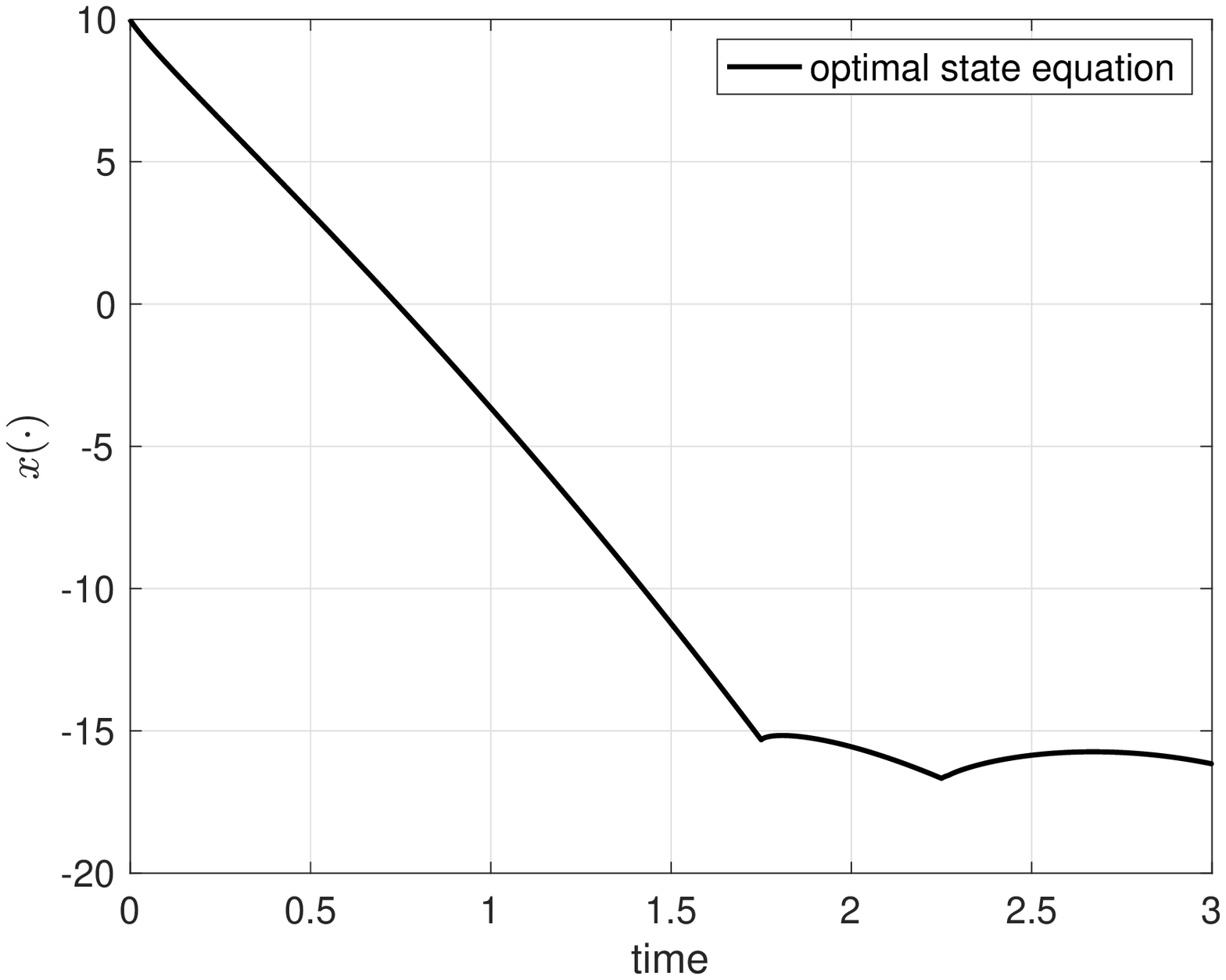}
\includegraphics[scale=0.28]{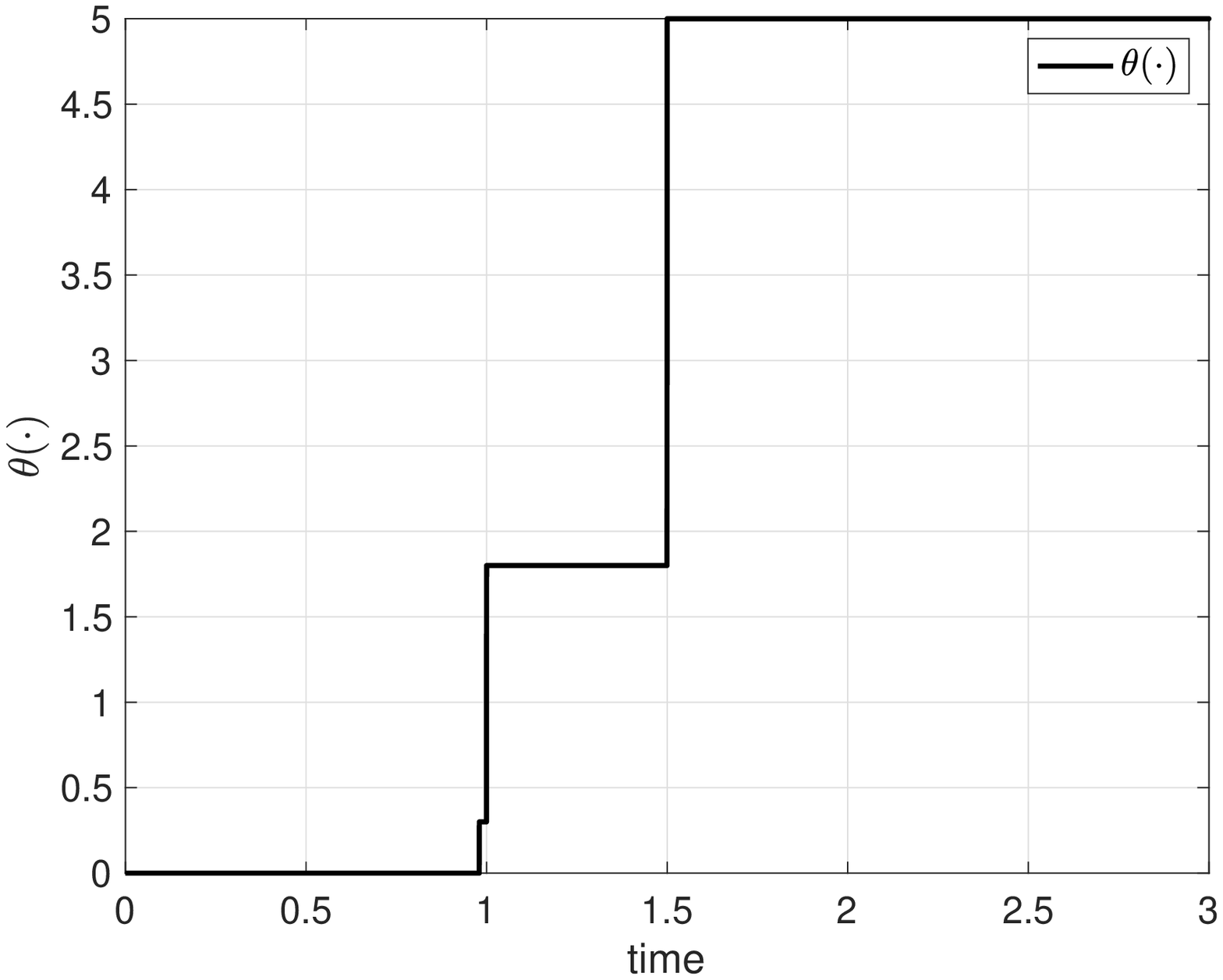}
\includegraphics[scale=0.28]{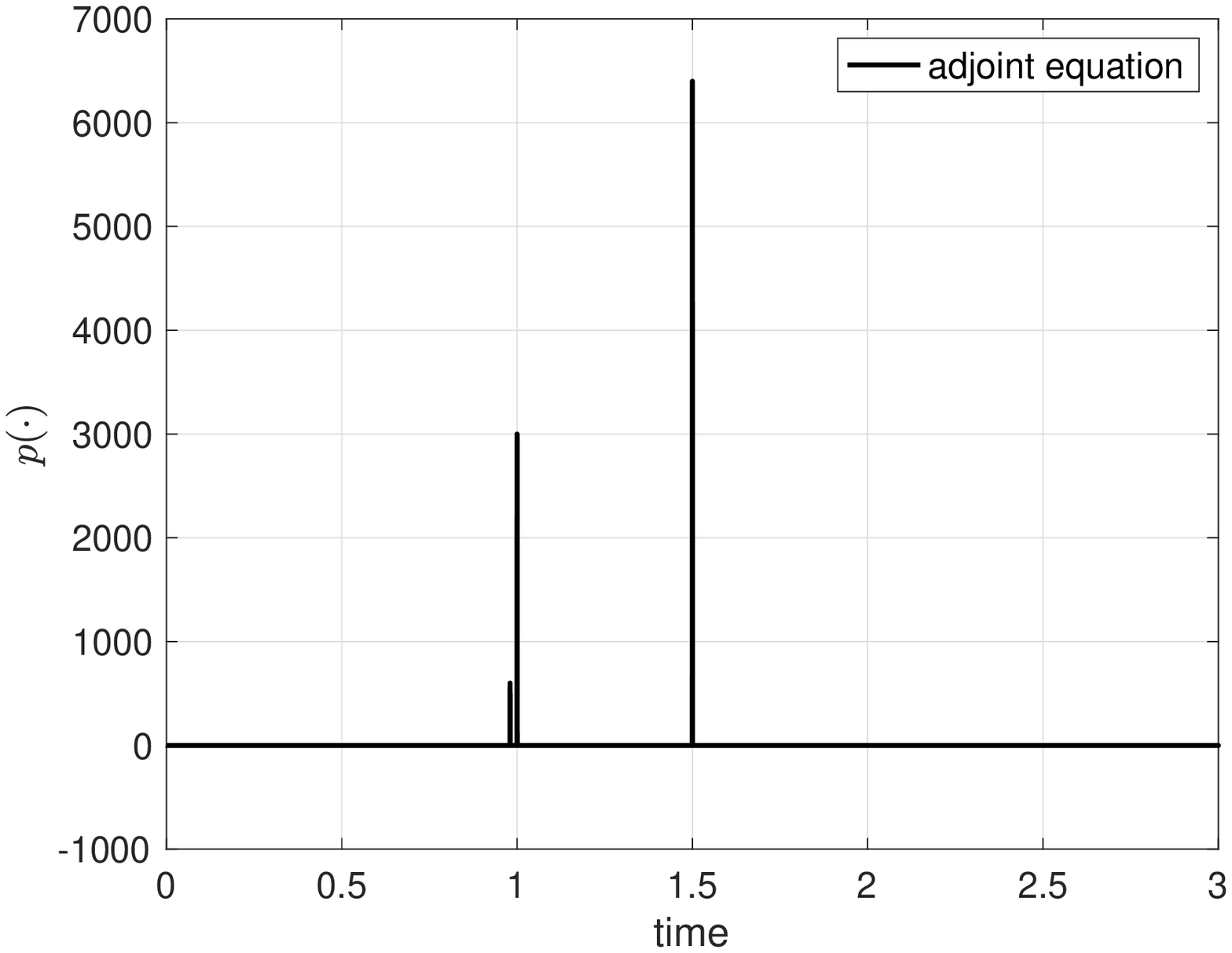}
\includegraphics[scale=0.28]{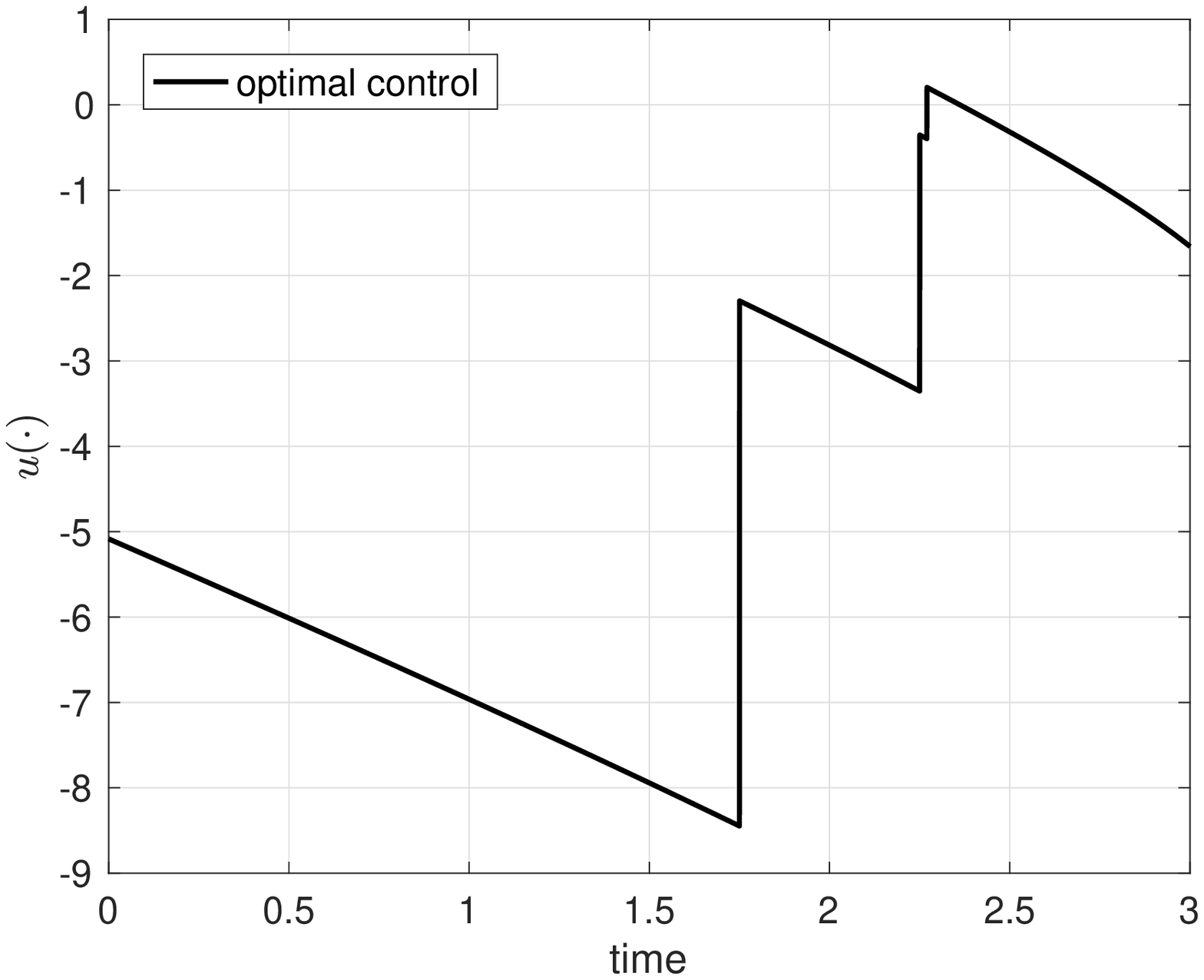}
\includegraphics[scale=0.28]{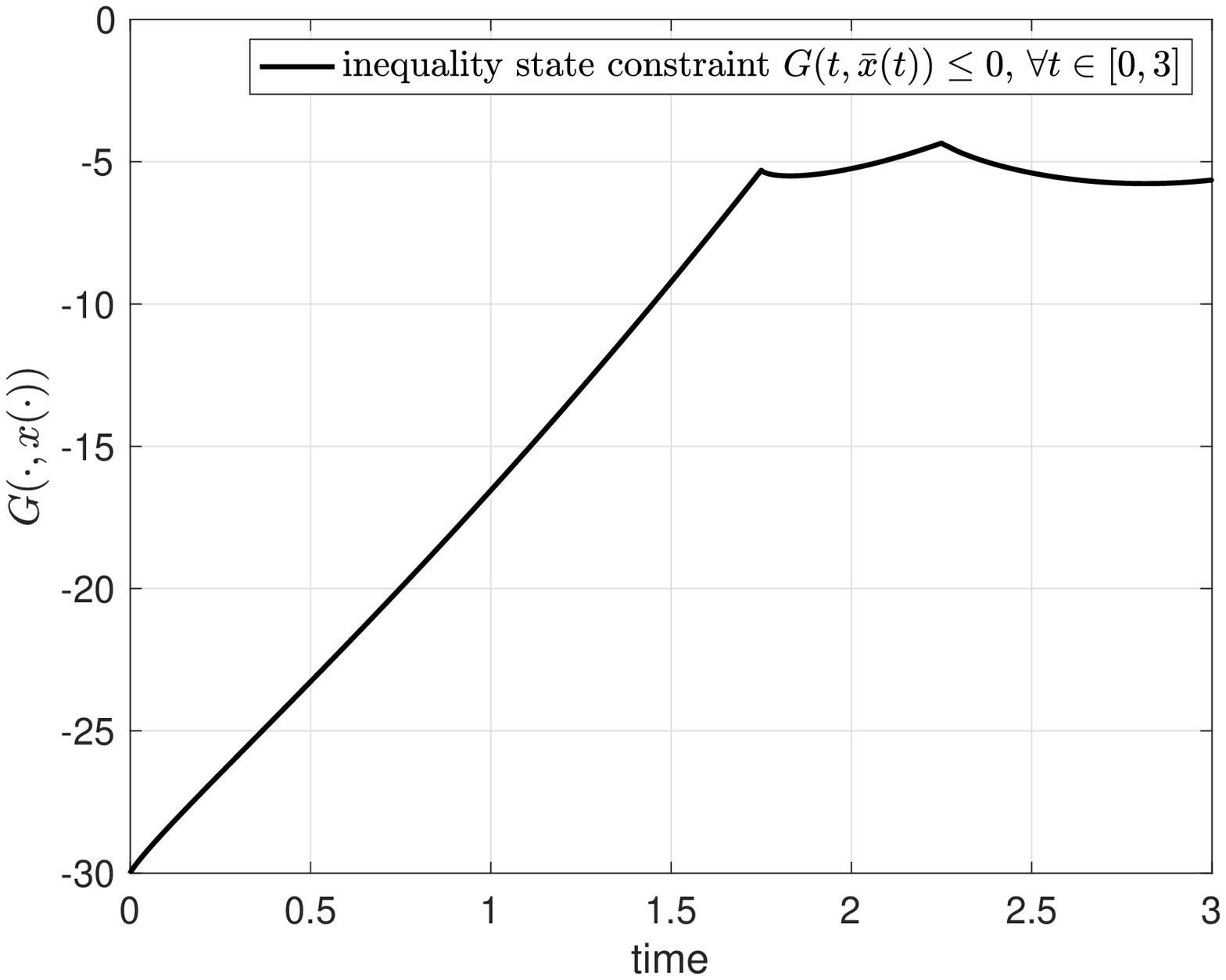}
\caption{Simulation results of Example \ref{Example_1} with $\alpha = 0.8$.}
\label{Fig_1}
\end{figure}

\begin{figure}[t]
\centering
\includegraphics[scale=0.28]{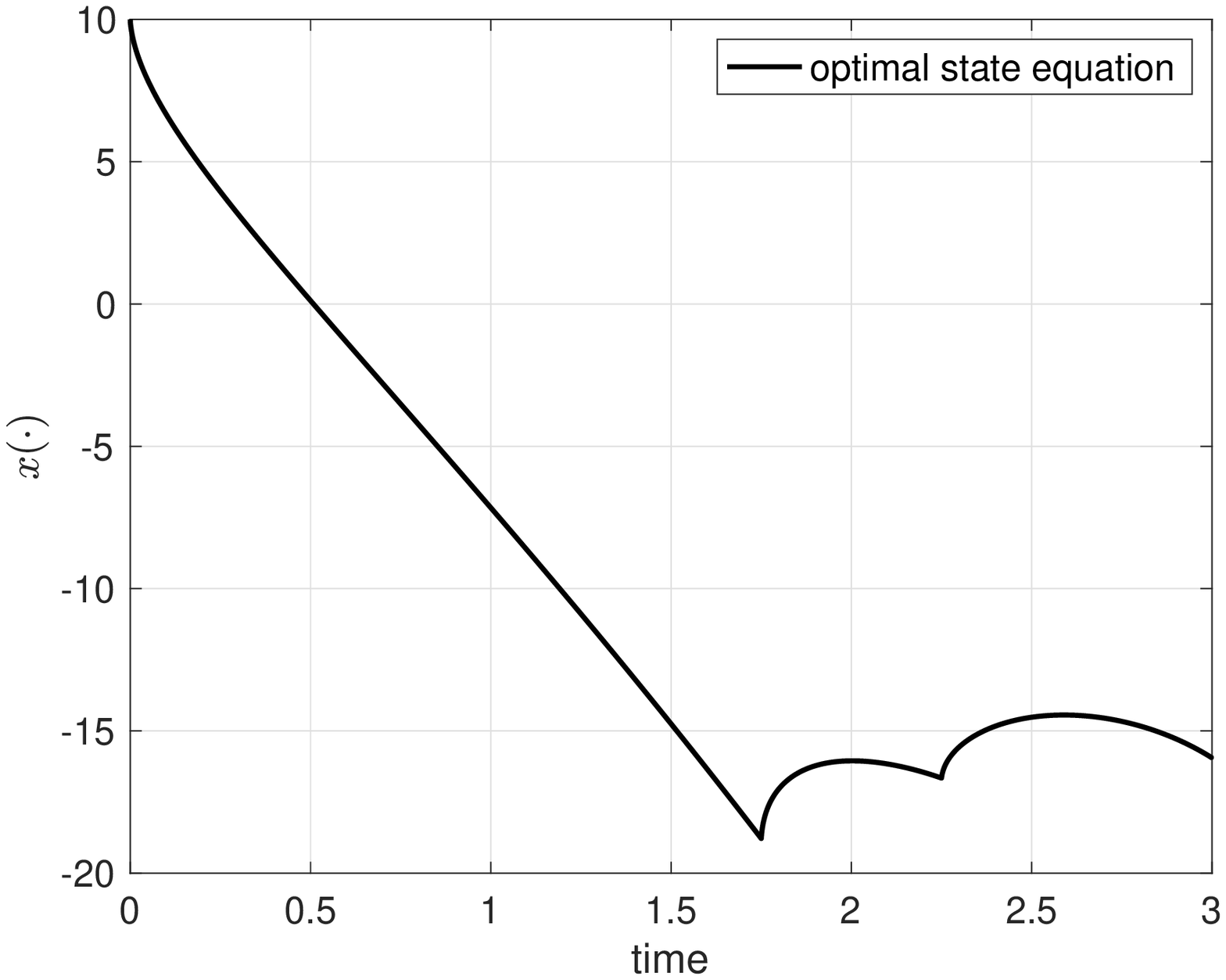}
\includegraphics[scale=0.28]{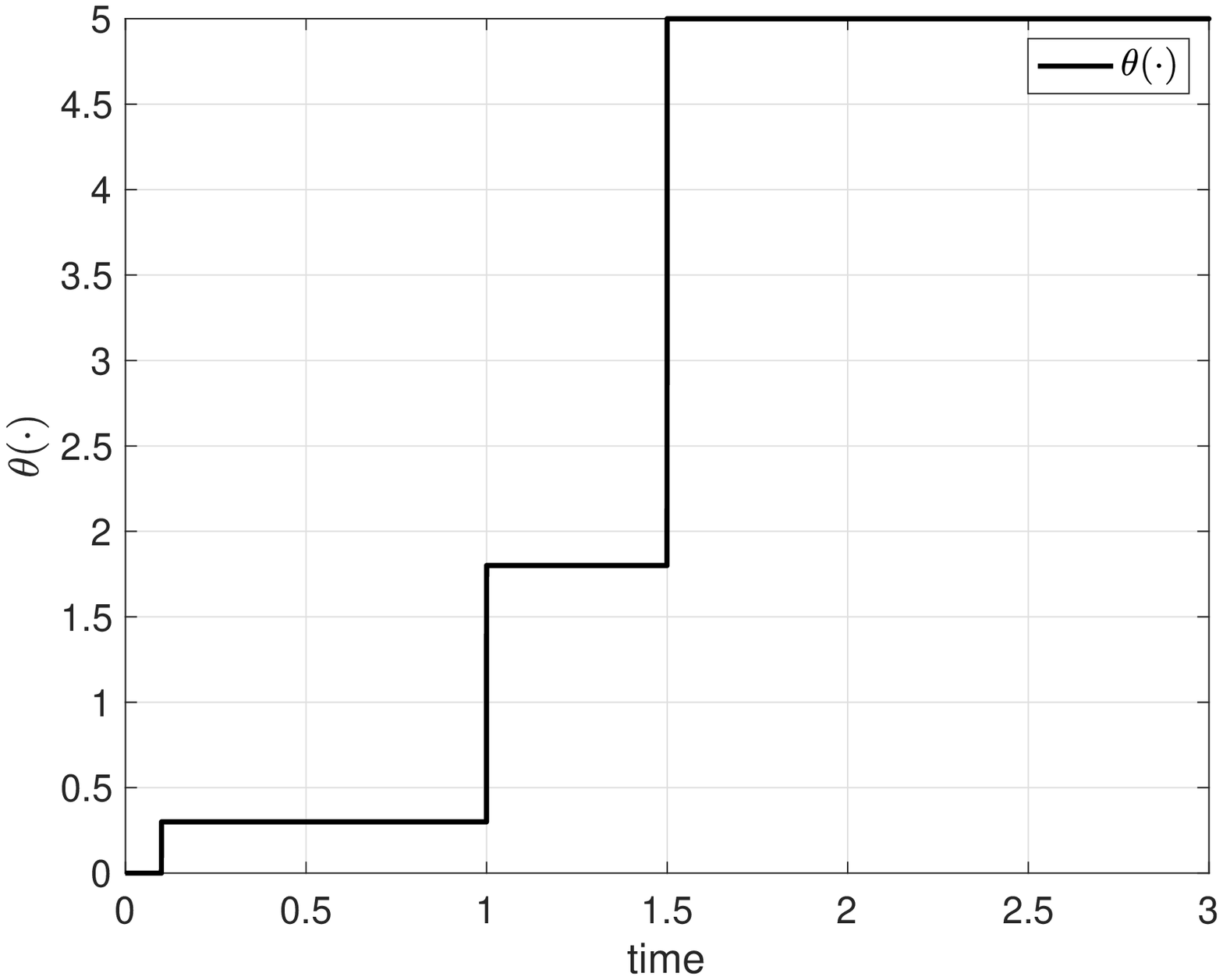}
\includegraphics[scale=0.28]{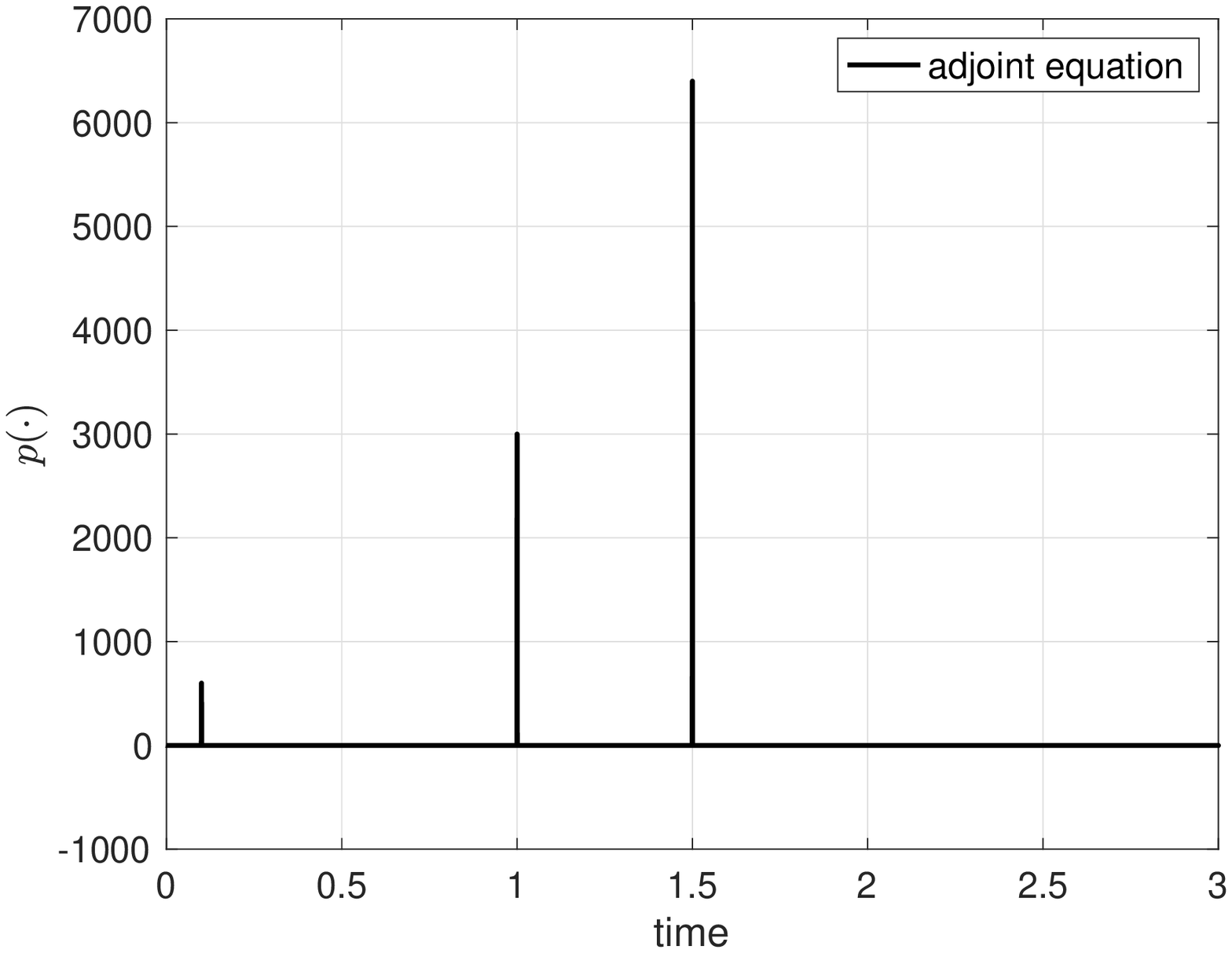}
\includegraphics[scale=0.28]{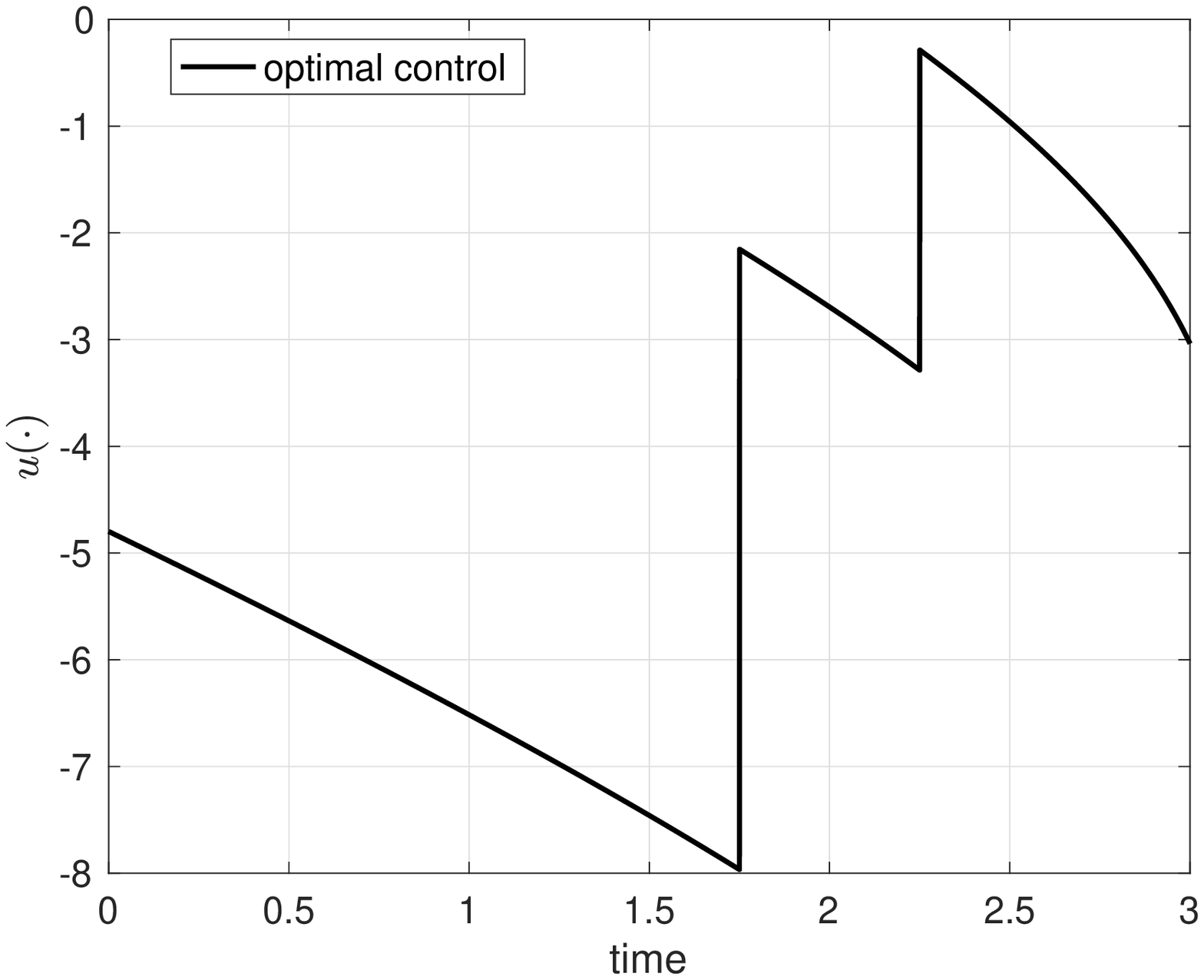}
\includegraphics[scale=0.28]{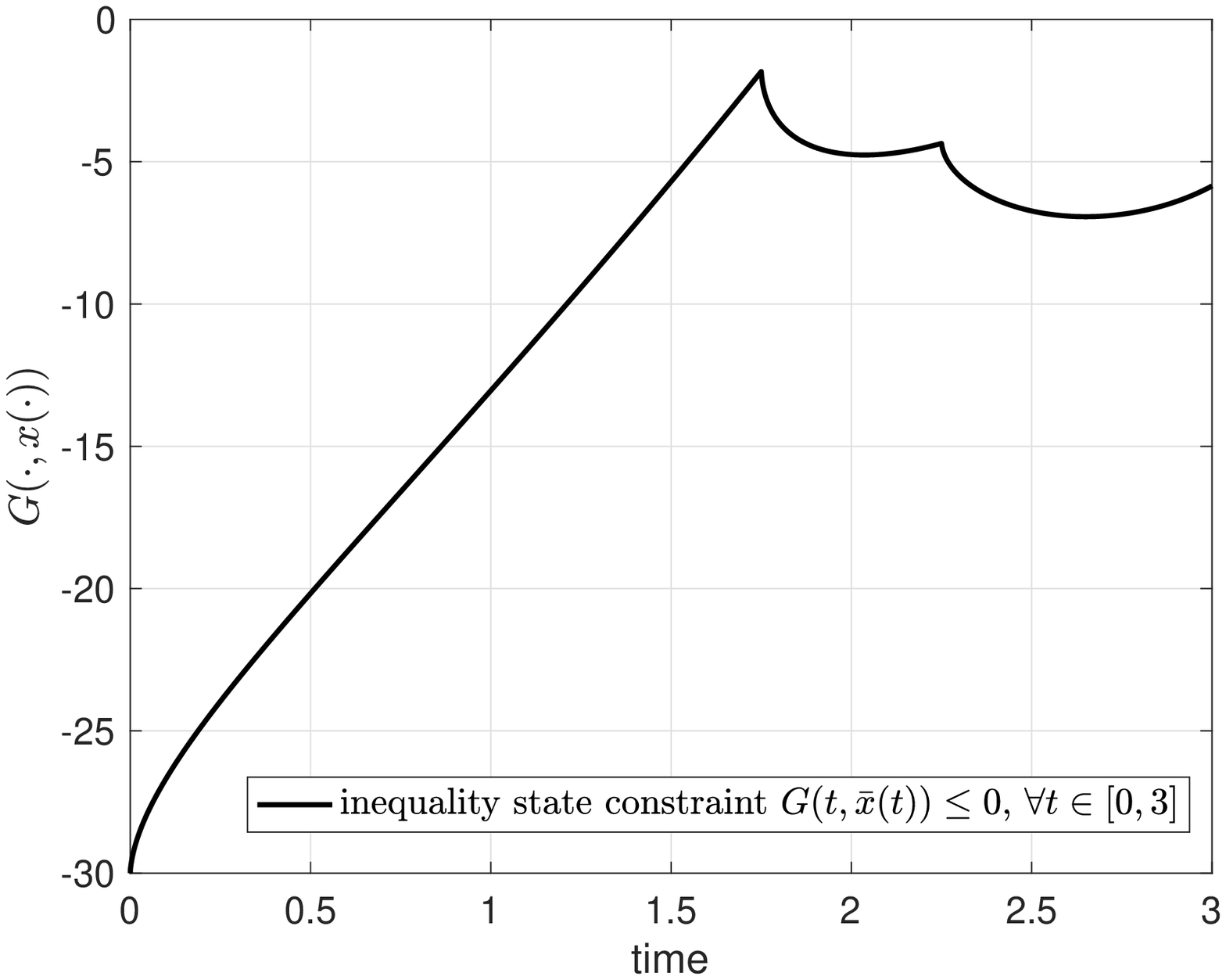}
\caption{Simulation results of Example \ref{Example_1} with $\alpha = 0.5$.}
\label{Fig_11111}
\end{figure}
\end{example}

\begin{example}	
\label{Example_2}
\normalfont
We consider the linear-quadratic problem of \textbf{(P)} without state constraints. The state equation and the objective functional are given by
\begin{align*}
x(t) & = x_0 + \int_0^t \frac{A_1(t,s) x(s) + B_1 u(s)}{(t-s)^{1-\alpha}} \dd s	+ \int_0^t \Bigl [ A_2(t,s) x(s) + B_2(t,s) u(s) \Bigr ] \dd s, \\
J(x_0,u(\cdot)) & = \frac{1}{2} \int_0^T \Bigl [ \langle x(s), Q(s) x(s) \rangle + \langle u(s), R(s) u(s) \rangle \Bigr ] \dd s + \frac{1}{2} \langle x(T), M x(T) \rangle,
\end{align*}
where (ii) of Assumption \ref{Assumption_2_1} holds and (see Lemmas \ref{Lemma_B_1}-\ref{Lemma_B_5_2323232} in Appendix \ref{Appendix_B})
\begin{align*}
\begin{cases}
A_1(\cdot,\cdot),A_2(\cdot,\cdot) \in L^{\infty}(\Delta;\mathbb{R}^{n \times n}),~ B_1(\cdot,\cdot),B_2(\cdot,\cdot) \in L^{\infty}(\Delta;\mathbb{R}^{n \times d}), \\
Q(\cdot) \in L^{\infty}([0,T];\mathbb{R}^{n \times n}),~ R(\cdot) \in L^{\infty}([0,T];\mathbb{R}^{d \times d}),~ M \in \mathbb{R}^{n \times n}, \\
Q(t) = Q(t)^\top  \geq 0,~ R(t) = R(t)^\top > c I_d~(c > 0),~ M=M^\top,~ \forall t \in [0,T].
\end{cases}	
\end{align*}
We further assume that the state space $X$ and the control space $U$ are appropriate sufficiently large compact subsets of $\mathbb{R}^n$ and $\mathbb{R}^d$, respectively, to satisfy Assumption \ref{Assumption_2_2}.

By Remark \ref{Remark_3_4} and the first-order optimality condition, the corresponding optimal solution is as follows:
\begin{align*}	
\overline{u}(t) & = R(t)^{-1} \Biggl [ \int_t^T \frac{B_1(r,t)^\top p(r)}{(r-t)^{1-\alpha}} \dd r - \mathds{1}_{[0,T)}(t) \frac{B_1(T,t)^\top M \overline{x}(T)}{(T-t)^{1-\alpha}} \\
&~~~~~ + \int_t^T B_2(r,t)^\top p(r) \dd r - B_2(T,t)^\top M \overline{x}(T) \Biggr ],~\textrm{a.e. $ t \in [0,T]$,}
\end{align*}
where $p$ is the adjoint equation given by
\begin{align*}
p(t) & = \int_t^T \frac{A_1(r,t)^\top p(r)}{(r-t)^{1-\alpha}} \dd r - \mathds{1}_{[0,T)}(t)	\frac{A_1(T,t)^\top M \overline{x}(T)}{(T-t)^{1-\alpha}} \\
&~~~ + \int_t^T A_2(r,t)^\top p(r) \dd r - A_2(T,t)^\top M \overline{x}(T) - Q(t) \overline{x}(t),~\textrm{a.e. $ t \in [0,T]$.}
\end{align*}
Assume that $T=2$, $x_0 = 1$, $A_1 = -1$, $A_2 = 0.2$, $B_1 = 2$, $B_2 = 0.1$, $M=1$, $R=1$, and $Q=0$. By applying the shooting method \cite{Bourdin_MP_2020}, the numerical simulation results are obtained in Figures \ref{Fig_3} and \ref{Fig_323423423}.
\begin{figure}[t]
\centering
\includegraphics[scale=0.28]{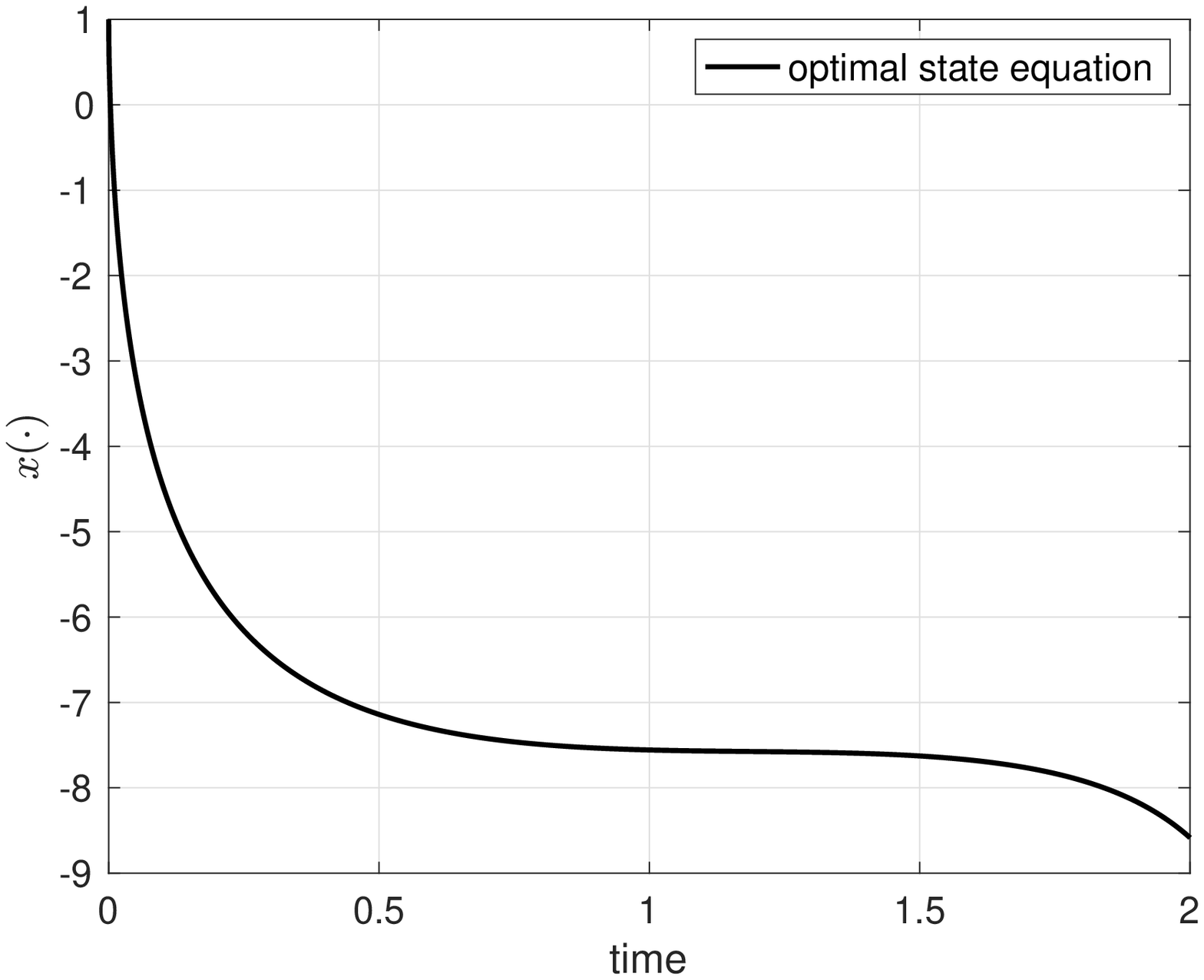}
\includegraphics[scale=0.28]{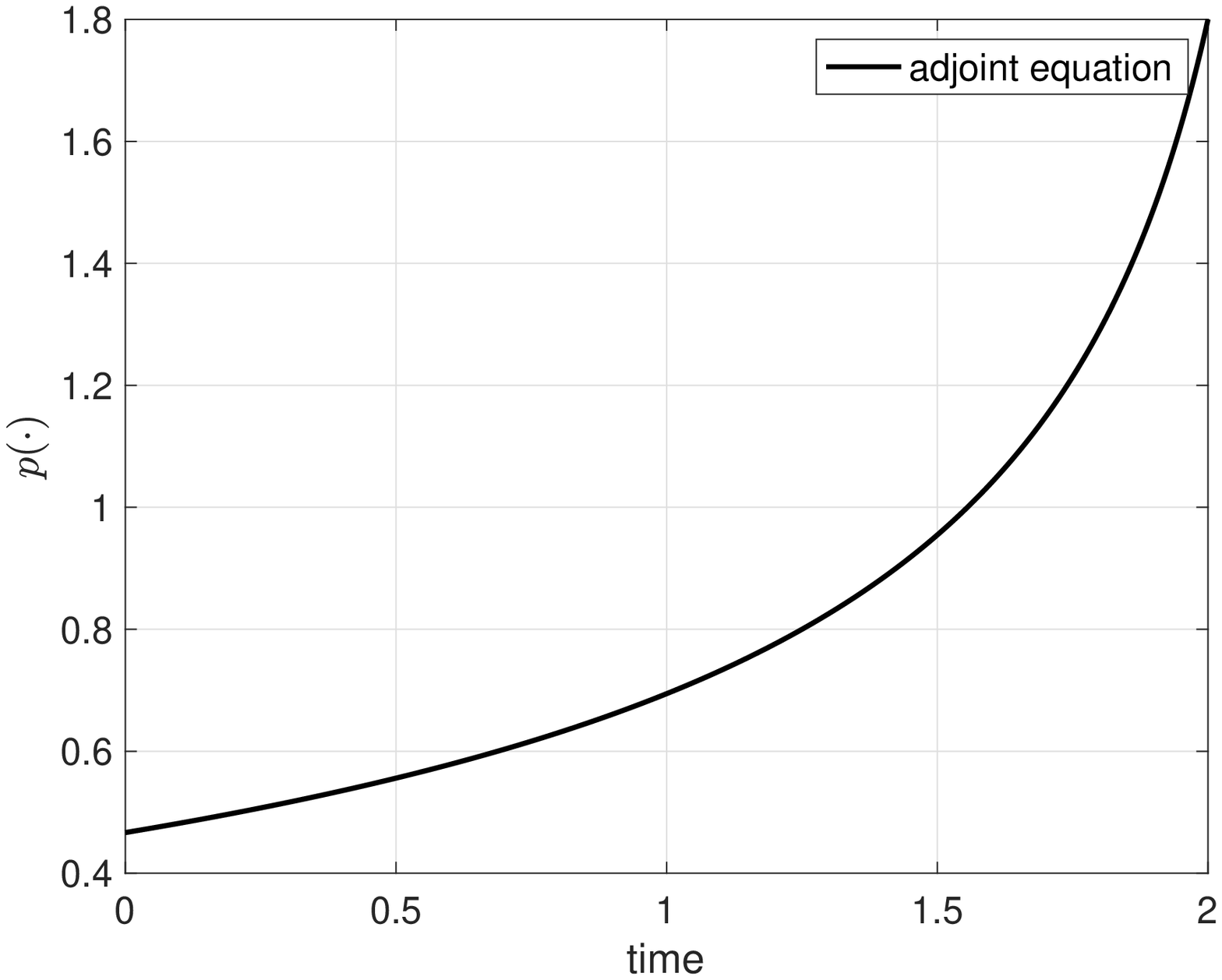}
\includegraphics[scale=0.28]{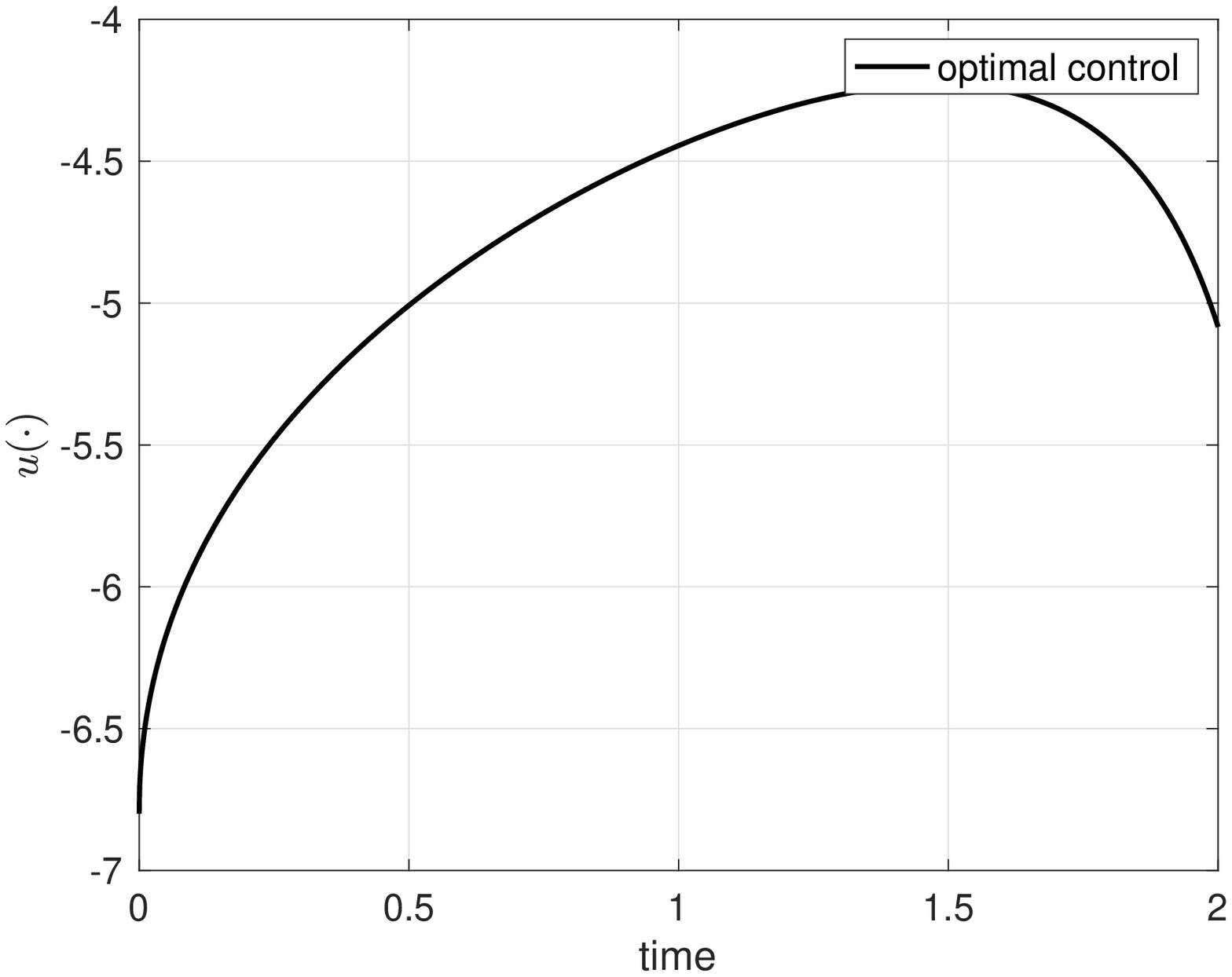}	
\caption{Simulation results of Example \ref{Example_2} with $\alpha = 0.5$.}
\label{Fig_3}
\end{figure}

\begin{figure}[t]
\centering
\includegraphics[scale=0.28]{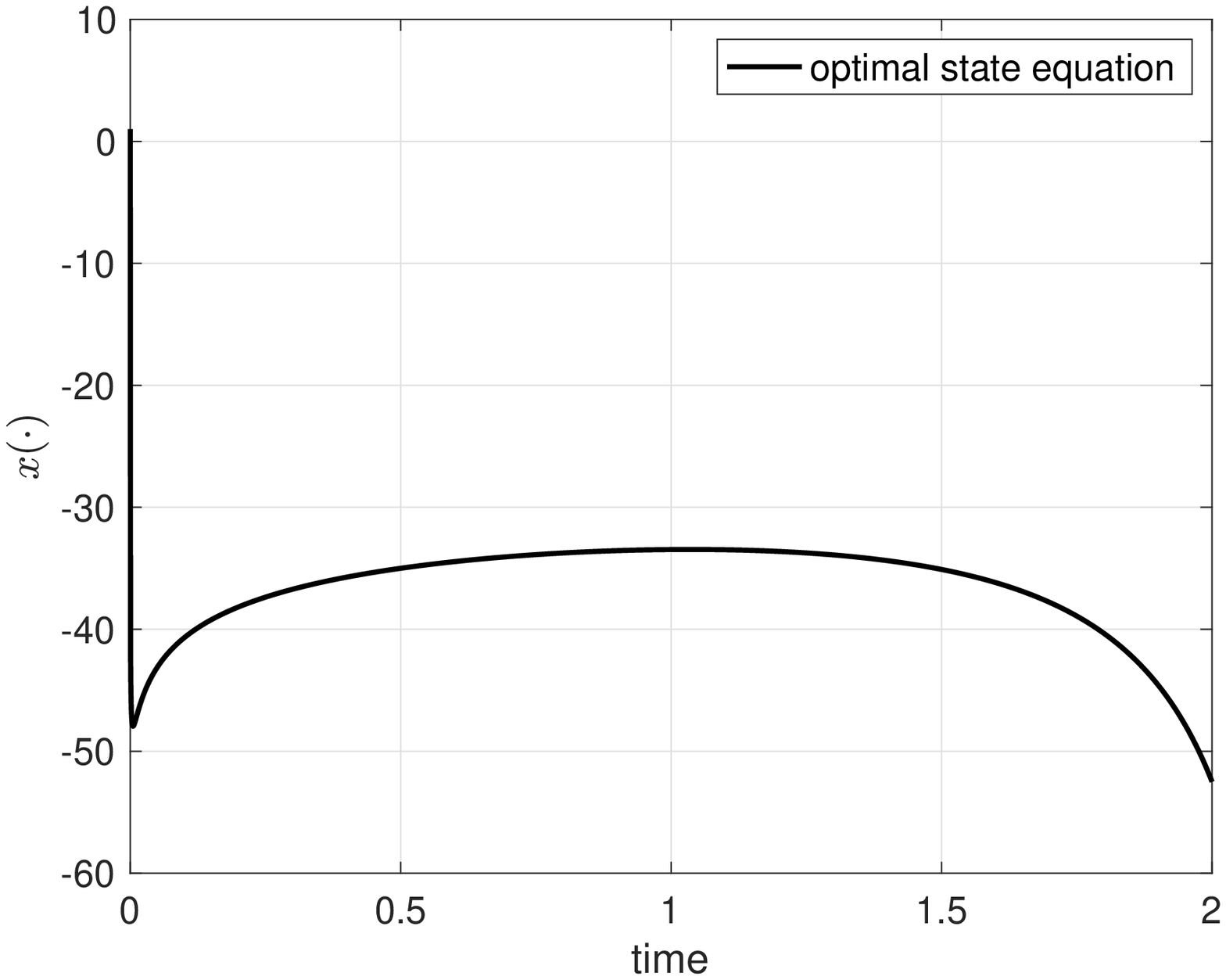}
\includegraphics[scale=0.28]{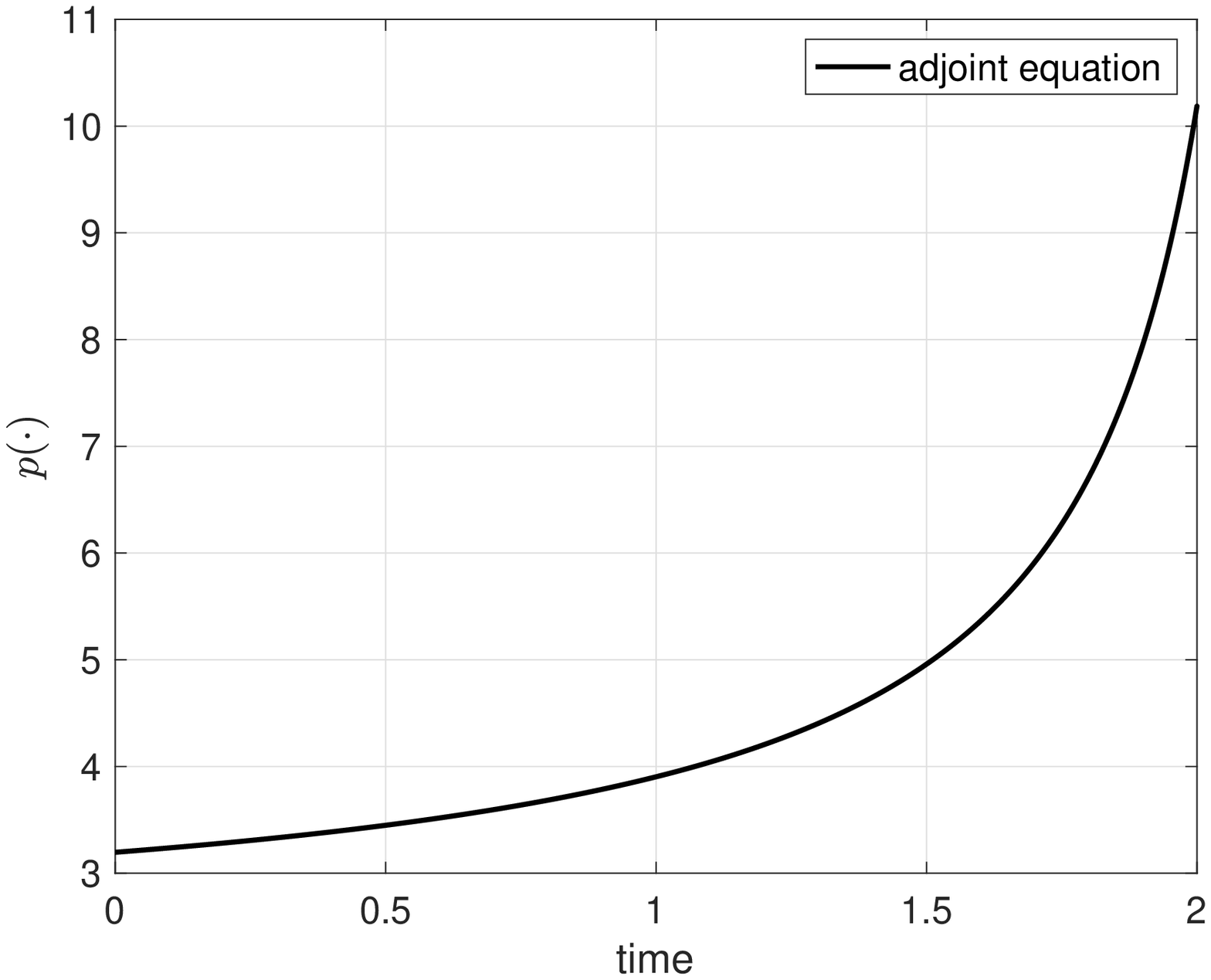}
\includegraphics[scale=0.28]{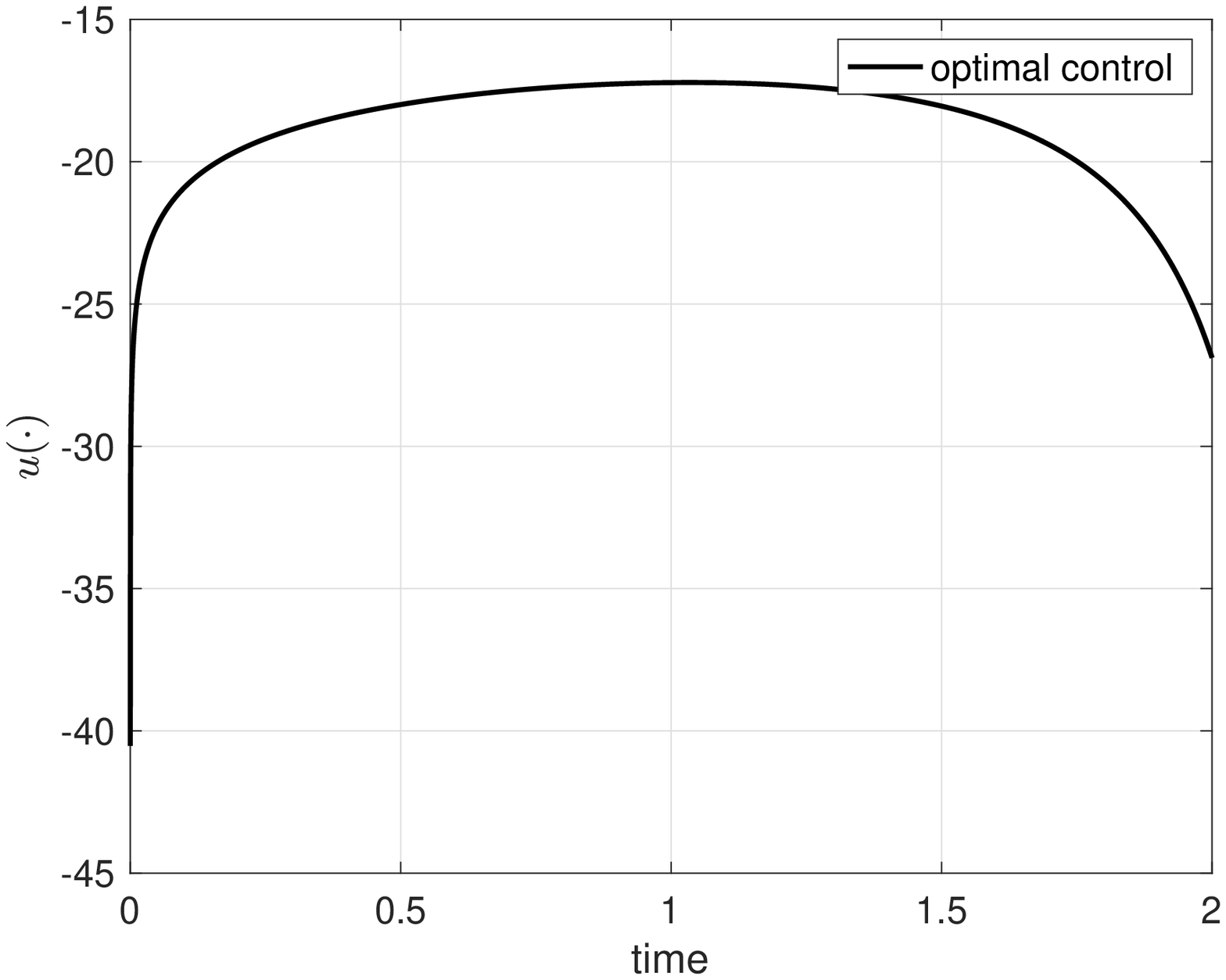}	
\caption{Simulation results of Example \ref{Example_2} with $\alpha = 0.01$.}
\label{Fig_323423423}
\end{figure}
\end{example}

\section{Proof of the Maximum Principle}\label{Section_4}
This section is devoted to prove Theorem \ref{Theorem_3_1}.

\subsection{Preliminaries on Distance Functions}\label{Section_4_1}

Let $(X,\|\cdot\|_{X})$ be a Banach space. We denote $(X^*,\|\cdot\|_{X^*})$ by the dual space of $(X,\|\cdot\|_{X})$, where $X^*$ is the space of bounded linear functionals on $X$ with the norm given by $\|\psi\|_{X^*} := \sup_{x \in X,~ \|x\|_{X} \leq 1} \langle \psi,x \rangle_{X^* \times X}$. Here, $\langle \cdot,\cdot \rangle_{X^* \times X}$ denotes the usual duality paring between $X$ and $X^*$, i.e., $\langle \psi,x \rangle_{X^* \times X} := \psi(x)$. Recall that $(X^*,\|\cdot\|_{X^*})$ is also a Banach space. 

We first deal with the terminal state constraints in (\ref{eq_3}). Recall that $F$ is a nonempty closed convex subsets of $\mathbb{R}^{2n}$. Let $d_{F} : \mathbb{R}^{2n} \rightarrow \mathbb{R}_{+}$ be the standard Euclidean distance function to $F$ defined by $d_{F}(x) := \inf_{y \in F} |x-y|_{\mathbb{R}^{2n}}$ for $x \in \mathbb{R}^{2n}$. Note that $d_{F}(x) = 0$ when $x \in F$. Then it follows from the projection theorem \cite[Theorem 2.10]{Ruszczynski_book} that there is a unique $P_{F}(x) \in F$ with $P_{F}(x) : \mathbb{R}^{2n} \rightarrow F \subset \mathbb{R}^{2n}$, the projection of $x \in \mathbb{R}^{2n}$ onto $F$, such that $d_{F}(x) = \inf_{y \in F} |x-y|_{\mathbb{R}^{2n}} = |x - P_{F}(x)|_{\mathbb{R}^{2n}}$. By \cite[Lemma 2.11]{Ruszczynski_book}, $P_{F}(x) \in F$ is the corresponding projection if and only if $\langle x - P_{F}(x),y- P_{F}(x) \rangle_{\mathbb{R}^{2n} \times \mathbb{R}^{2n}} \leq 0$ for all $y \in F$, which leads to the characterization of $P_{F}(x)$. In view of \cite[Definition 2.37]{Ruszczynski_book}, we have $x - P_{F}(x) \in N_{F}(P_{F}(x))$ for $x \in \mathbb{R}^{2n}$, where $N_{F}(x)$ is the normal cone to the convex set $F$ at a point $x \in \mathbb{R}^{2n}$ defined by
\begin{align}
\label{eq_4_1}
N_{F}(x) := \{ y \in \mathbb{R}^{2n}~|~ \langle y,y^\prime - x \rangle_{\mathbb{R}^{2n} \times \mathbb{R}^{2n}} \leq 0,~ \forall y^\prime \in F \}.	
\end{align}
Based on the distance function $d_F$, the terminal state constraint in (\ref{eq_3}) can be written as
\begin{align*}
d_F(\overline{x}_0, \overline{x}(T;\overline{x}_0,\overline{u})) = 0 ~\Leftrightarrow~ 	\begin{bmatrix}
		\overline{x}_0 \\
		\overline{x}(T;\overline{x}_0,\overline{u}))		 		
 	\end{bmatrix} \in F.
\end{align*}


\begin{lemma}\label{Lemma_4_1}
The function $d_F(x)^2$ is Fr\'echet differentiable on $\mathbb{R}^{2n}$ with the Fr\'echet differentiation of $d_F(x)^2$ at $x$ given by $D d_F(x)^2(h) = 2 \langle x-P_F(x),h \rangle_{\mathbb{R}^{2n} \times \mathbb{R}^{2n}}$ for $h \in \mathbb{R}^{2n}$.
\end{lemma}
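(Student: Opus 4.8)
The plan is to work directly from the representation $d_F(x)^2 = |x - P_F(x)|^2_{\mathbb{R}^{2n}}$ furnished by the projection theorem recalled above, and to establish Fr\'echet differentiability by squeezing the increment between two quadratic error terms. Writing $p := P_F(x)$ and $p_h := P_F(x+h)$ for a perturbation $h \in \mathbb{R}^{2n}$, I would prove the two-sided estimate
\[
\bigl| d_F(x+h)^2 - d_F(x)^2 - 2 \langle x - P_F(x), h \rangle_{\mathbb{R}^{2n} \times \mathbb{R}^{2n}} \bigr| \leq |h|^2_{\mathbb{R}^{2n}},
\]
from which the claim follows immediately after dividing by $|h|_{\mathbb{R}^{2n}}$ and letting $h \to 0$, the Fr\'echet derivative candidate $h \mapsto 2\langle x - P_F(x), h \rangle$ being manifestly a bounded linear functional on $\mathbb{R}^{2n}$.

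For the upper bound I would use that $p = P_F(x) \in F$ is a feasible competitor for the projection of $x+h$, so that $d_F(x+h)^2 = |x+h-p_h|^2 \leq |x+h-p|^2 = d_F(x)^2 + 2\langle x-p, h\rangle + |h|^2$, which gives the desired inequality with constant $1$. For the lower bound I would symmetrically use that $p_h \in F$ is feasible for the projection of $x$, giving $d_F(x)^2 = |x-p|^2 \leq |x-p_h|^2$; expanding $|x-p_h|^2 = |(x+h)-p_h-h|^2$ and rearranging then yields $d_F(x+h)^2 - d_F(x)^2 - 2\langle x-p,h\rangle \geq |h|^2 - 2\langle p_h - p, h\rangle$.

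The main obstacle, and the only nontrivial step, is controlling the cross term $\langle p_h - p, h\rangle = \langle P_F(x+h)-P_F(x), h\rangle$ arising in the lower bound. Here I would invoke the nonexpansiveness (i.e. $1$-Lipschitz continuity) of the metric projection onto the closed convex set $F$, namely $|P_F(x+h)-P_F(x)|_{\mathbb{R}^{2n}} \leq |h|_{\mathbb{R}^{2n}}$. This in turn follows by adding the two variational inequalities obtained from the characterization $\langle x - P_F(x), y - P_F(x) \rangle_{\mathbb{R}^{2n}\times\mathbb{R}^{2n}} \leq 0$ (valid for all $y \in F$) stated above via \cite[Lemma 2.11]{Ruszczynski_book}: evaluating it at $x$ with the choice $y=p_h$ and at $x+h$ with the choice $y=p$ and summing gives $|p_h-p|^2 \leq \langle h, p_h-p\rangle$, whence $|p_h-p|_{\mathbb{R}^{2n}} \leq |h|_{\mathbb{R}^{2n}}$ by Cauchy--Schwarz. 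The same Cauchy--Schwarz estimate then bounds $|\langle p_h-p, h\rangle| \leq |p_h-p|_{\mathbb{R}^{2n}}|h|_{\mathbb{R}^{2n}} \leq |h|^2_{\mathbb{R}^{2n}}$, so the lower bound becomes $\geq -|h|^2_{\mathbb{R}^{2n}}$. Combined with the upper bound, this establishes the two-sided estimate and completes the proof; I expect no further technical subtleties since everything is finite-dimensional.
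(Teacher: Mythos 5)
Your proposal is correct and follows essentially the same route as the paper's proof: both establish the two-sided quadratic estimate by using $P_F(x)$ as a competitor for the projection of $x+h$ (upper bound) and $P_F(x+h)$ as a competitor for the projection of $x$ (lower bound), with the cross term $\langle P_F(x+h)-P_F(x),h\rangle$ controlled by nonexpansiveness of the projection. The only difference is cosmetic: the paper cites nonexpansiveness from \cite[Theorem 2.13]{Ruszczynski_book}, whereas you rederive it (in slightly sharper monotonicity form $|P_F(x+h)-P_F(x)|^2 \leq \langle h, P_F(x+h)-P_F(x)\rangle$) from the variational characterization of the projection.
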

\begin{proof}
Note that
\begin{align*}
d_F(x+h)^2 - d_F(x)^2  & \leq |x+h - P_F(x)|_{\mathbb{R}^{2n}}^2 - |x - P_F(x)|_{\mathbb{R}^{2n}}^2  = 2 \langle x - P_F(x),h \rangle + |h|_{\mathbb{R}^{2n}}^2,
\end{align*}
and by the fact that the projection operator is nonexpansive, i.e., $|P_{F}(x) - P_{F}(x^\prime) |_{\mathbb{R}^{2n}} \leq |x-x^\prime|_{\mathbb{R}^{2n}}$, for all $x,x^\prime \in \mathbb{R}^{2n}$ (see \cite[Theorem 2.13]{Ruszczynski_book}), 
\begin{align*}
d_F(x)^2 - d_F(x+h)^2 & \leq|x - P_F(x+h)|_{\mathbb{R}^{2n}}^2 - |x+h - P_F(x+h)|_{\mathbb{R}^{2n}}^2 \\
& = - 2 \langle x - P_F(x), h \rangle +  2 \langle P_F(x+h) - P_F(x), h \rangle +  |h|_{\mathbb{R}^{2n}}^2 \\
& \leq - 2 \langle x - P_F(x), h \rangle + 3|h|_{\mathbb{R}^{2n}}^2.
\end{align*}
Since $-3 |h|_{\mathbb{R}^{2n}} \leq \frac{|d_F(x+h)^2 - d_F(x)^2 - 2 \langle x - P_F(x), h \rangle |}{|h|_{\mathbb{R}^{2n}}}	 \leq |h|_{\mathbb{R}^{2n}}$, this completes the proof.
\end{proof}

Now, we consider the inequality state constraint given in (\ref{eq_3}). Let $\gamma: C([0,T];\mathbb{R}^n) \rightarrow C([0,T];\mathbb{R}^m)$ be defined by $\gamma(x(\cdot;x_0,u)) := (\gamma_1(x(\cdot;x_0,u)),\ldots, \gamma_m(x(\cdot;x_0,u))):= G(\cdot,x(\cdot;x_0,u)) = \begin{bmatrix}
 	G^1(\cdot,x(\cdot;x_0,u)) & \cdots & G^m(\cdot,x(\cdot;x_0,u))
 \end{bmatrix}$. Moreover, we let $S \subset C([0,T];\mathbb{R}^m)$ be the nonempty closed convex cone of $C([0,T];\mathbb{R}^m)$ defined by $S := C([0,T];\mathbb{R}_{-}^m)$, where $\mathbb{R}_{-}^m := \mathbb{R}_{-} \times \cdots \times \mathbb{R}_{-}$. Note that $S$ has a nonempty interior. Then the inequality state constraint in (\ref{eq_3}) can be expressed as follows:
\begin{align}
\label{eq_4_2}
\gamma(\overline{x}(\cdot;\overline{x}_0,\overline{u})) \in S~ \Leftrightarrow~ G^i(t,\overline{x}(t;\overline{x}_0,\overline{u})) \leq 0,~\forall t \in [0,T],~ i=1,\ldots,m.
\end{align}

Recall that $G^i$, $i=1,\ldots,m$, are continuously differentiable in $x$. Then $\gamma$ is Fr\'echet differentiable with its Fr\'echet differentiation at $\gamma(x(\cdot)) \in S$ given by $D\gamma(x(\cdot))(w) =  G_x(\cdot,x(\cdot)) w $ for all $w \in C([0,T];\mathbb{R}^n)$ \cite[page 167]{Flett_book}. The normal cone to $S$ at $x \in S$ is defined by
\begin{align}
\label{eq_4_3}
N_S(x) : = \{ \kappa \in C([0,T]; \mathbb{R}^m)^*~|~\langle \kappa,\kappa^\prime - x \rangle_{C_m^* \times C_m} \leq 0,~ \forall \kappa^\prime \in S \},
\end{align}
where $\langle \cdot,\cdot \rangle_{C_m^* \times C_m} := \langle \cdot,\cdot \rangle_{C([0,T]; \mathbb{R}^m)^* \times C([0,T]; \mathbb{R}^m)}$ stands for the duality paring between $C([0,T]; \mathbb{R}^m)$ and $C([0,T]; \mathbb{R}^m)^*$ with $C([0,T]; \mathbb{R}^m)^*$ being the dual space of $C([0,T]; \mathbb{R}^m)$.

\begin{remark}\label{Remark_4_2_3_5_2_3_2_1}
Note that $(C([0,T]; \mathbb{R}^m),\|\cdot\|_{\infty})$ is a separable Banach space \cite[Theorem 6.6, page 140]{Conway_2000_book}. Then by \cite[Theorem 2.18, page 42]{Li_Yong_book}, there exists a norm $\|\cdot \|_{C([0,T]; \mathbb{R}^m)}$ on $C([0,T]; \mathbb{R}^m)$, which is equivalent to $\|\cdot\|_{\infty}$ \cite[Definition 2.17, page 42]{Li_Yong_book}, such that $(C([0,T]; \mathbb{R}^m)^*, \|\cdot \|_{C([0,T]; \mathbb{R}^m)^*})$ is strictly convex, i.e., $\|x \|_{C([0,T]; \mathbb{R}^m)^*} = \|y \|_{C([0,T]; \mathbb{R}^m)^*} = 1$ and $\|x+y \|_{C([0,T]; \mathbb{R}^m)^*} =2 $ imply $x=y$ for $x,y \in C([0,T]; \mathbb{R}^m)^*$ \cite[Definition 2.12, page 41]{Li_Yong_book}.
\end{remark}

Let $d_{S}: C([0,T]; \mathbb{R}^m) \rightarrow \mathbb{R}_{+}$ be the distance function to $S$ defined by
\begin{align*}
d_S(x) := \inf_{y \in S} \|x-y\|_{C([0,T]; \mathbb{R}^m)}~ \textrm{for}~x \in C([0,T]; \mathbb{R}^m).
\end{align*}
By definition of $d_S$, (\ref{eq_4_2}) is equivalent to
\begin{align*}
d_S \Bigl ( \gamma(\overline{x}(\cdot;\overline{x}_0,\overline{u})) \Bigr ) = 0 ~ \Leftrightarrow ~ \gamma \Bigl (\overline{x}(\cdot;\overline{x}_0,\overline{u}) \Bigr ) \in S.	
\end{align*}

\begin{lemma}\label{Lemma_4_2}
The distance function $d_S$ is nonexpansive,  continuous, and convex. 
\end{lemma}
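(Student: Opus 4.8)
The distance function $d_S$ is nonexpansive, continuous, and convex.

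Let me think about this carefully.The plan is to verify the three properties in turn, noting that nonexpansiveness is the workhorse from which continuity follows at once, while convexity is the only one of the three that genuinely exploits the fact that $S$ is a convex cone rather than merely a nonempty set.

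First I would establish nonexpansiveness directly from the triangle inequality. Fix arbitrary $x,x^\prime \in C([0,T];\mathbb{R}^m)$. For any $y \in S$ one has
\begin{align*}
d_S(x) \leq \|x-y\|_{C([0,T];\mathbb{R}^m)} \leq \|x-x^\prime\|_{C([0,T];\mathbb{R}^m)} + \|x^\prime - y\|_{C([0,T];\mathbb{R}^m)}.
\end{align*}
Taking the infimum over $y \in S$ on the right-hand side gives $d_S(x) \leq \|x-x^\prime\|_{C([0,T];\mathbb{R}^m)} + d_S(x^\prime)$, and interchanging the roles of $x$ and $x^\prime$ yields the reverse bound. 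Hence $|d_S(x)-d_S(x^\prime)| \leq \|x-x^\prime\|_{C([0,T];\mathbb{R}^m)}$, which is nonexpansiveness. I emphasize that this step uses only that $S$ is nonempty; neither convexity nor the cone structure is needed here. Continuity is then immediate, since a nonexpansive map is $1$-Lipschitz and therefore continuous.

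For convexity I would fix $\lambda \in [0,1]$ together with $x,x^\prime \in C([0,T];\mathbb{R}^m)$, and invoke the convexity of $S$: for any $y,y^\prime \in S$ the convex combination $\lambda y + (1-\lambda)y^\prime$ again lies in $S$, so it is an admissible competitor for the infimum defining $d_S(\lambda x + (1-\lambda)x^\prime)$. Using the triangle inequality and the absolute homogeneity of the norm,
\begin{align*}
d_S \bigl(\lambda x + (1-\lambda)x^\prime\bigr) \leq \bigl\| \lambda(x-y) + (1-\lambda)(x^\prime - y^\prime)\bigr\|_{C([0,T];\mathbb{R}^m)} \leq \lambda \|x-y\|_{C([0,T];\mathbb{R}^m)} + (1-\lambda)\|x^\prime - y^\prime\|_{C([0,T];\mathbb{R}^m)}.
\end{align*}
Taking the infimum over $y \in S$ and over $y^\prime \in S$ separately then gives $d_S(\lambda x + (1-\lambda)x^\prime) \leq \lambda d_S(x) + (1-\lambda)d_S(x^\prime)$, which is the desired convexity inequality.

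The only point requiring mild care — and the closest thing to an obstacle in an otherwise routine argument — is the final passage to the infimum in the convexity step: one must minimize over the two free variables $y$ and $y^\prime$ independently rather than along the diagonal $y=y^\prime$. This is legitimate precisely because, after the triangle inequality is applied, the upper bound separates additively into a term depending only on $y$ and a term depending only on $y^\prime$, so the infimum of the sum equals the sum of the infima. No use of the particular structure $S = C([0,T];\mathbb{R}_{-}^m)$ or of the equivalent strictly convex norm from Remark \ref{Remark_4_2_3_5_2_3_2_1} is needed; convexity of $S$ alone suffices.
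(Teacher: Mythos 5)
Your proof is correct and follows essentially the same route as the paper: nonexpansiveness (and hence continuity) via the triangle inequality, and convexity via convex combinations of competitors in $S$. The only difference is presentational — the paper works with $\epsilon$-approximate minimizers $\pi, \pi_x, \pi_y \in S$ while you pass directly to infima — which is the same underlying argument.
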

\begin{proof}
To simplify the notation, let $\|\cdot\| := \|\cdot\|_{C([0,T]; \mathbb{R}^m)}$. We fix $x,y \in S$. Let $\epsilon > 0$ be given. By definition, there is $\pi \in S$ such that $d_S(y) \geq \|\pi-y\| - \epsilon$. We then have
\begin{align*}
d_S(x) \leq \|x-\pi\| \leq \|x-y\| +\|y-\pi\| \leq 	\|x-y\| + d_S(y) + \epsilon.
\end{align*}
Similarly, we have $d_S(x) \geq |\pi-x\| - \epsilon$, and
\begin{align*}	
d_S(y) \leq \|y-\pi\| \leq \|x-y\| + \|x-\pi\| \leq \|x-y\| + d_S(x) + \epsilon.
\end{align*}
Since $\epsilon$ is arbitrary, $|d_S(x) - d_S(y)| \leq \|x-y\|$ holds, which also implies the continuity of $d_S$.

As $S$ is convex, we have $(1-\eta) x + \eta y \in S$ for $\eta \in [0,1]$. By definition of $d_S$, there are $\pi_x, \pi_y \in S$ such that $\|\pi_x - x\| \leq d_S(x) + \epsilon$ and $\|\pi_y - y\| \leq d_S(y) + \epsilon$. 
Define $\pi := (1-\eta) \pi_x +  \eta \pi_y \in S$ for $\eta \in [0,1]$. It then follows that
\begin{align*}
d_S ((1-\eta) x + \eta y) & \leq \|\pi - ((1-\eta) x + \eta y) \|  
\leq (1-\eta) d_S(x) + \eta d_S(y) + \epsilon.
\end{align*}
Since $\epsilon$ is arbitrary, $d_S$ is convex. We complete the proof.
\end{proof}

We define the subdifferential of $d_S$ at $x \in C([0,T];\mathbb{R}^m)$ by \cite[page 214]{Rockafellar_book}
%
\begin{align}
\label{eq_4_5_6_5_43_3_5_7_3_3}
\partial d_S(x) := \{ y^\prime \in C([0,T];\mathbb{R}^m)^*~|~\langle y^\prime , y -x \rangle_{C_m^* \times C_m}	\leq d_S(y) - d_S(x),~ \forall y \in C([0,T];\mathbb{R}^m) \}.
\end{align}
By \cite[page 27]{Clarke_book} and Lemma \ref{Lemma_4_2}, since $d_S$ is continuous, $\partial d_S(x)$ is a nonempty ($\partial d_S(x) \neq \emptyset$), convex, and weak--$^*$ compact subset of $C([0,T];\mathbb{R}^m)^*$. Moreover, from \cite[Proposition 2.1.2]{Clarke_book}, it holds that $\|y^\prime\|_{C([0,T];\mathbb{R}^m)^*} \leq 1$ for all $y^\prime \in \partial d_S(x)$. 

An important consequence of Remark \ref{Remark_4_2_3_5_2_3_2_1} and Lemma \ref{Lemma_4_2} is as follows:
\begin{remark}\label{Remark_4_1}
Since $\partial d_S(x)$ is convex, $\eta y^\prime + (1-\eta) y^{\prime \prime} \in \partial d_S(x)$ for any $y^\prime,y^{\prime \prime} \in \partial d_S(x)$ and $\eta \in [0,1]$. Consider $\|\eta y^\prime + (1-\eta) y^{\prime \prime}  \|_{C([0,T]; \mathbb{R}^m)^*} = 1$ for $\eta \in [0,1]$. Then $\|y^\prime\|_{C([0,T];\mathbb{R}^m)^*} = 1$ and $ \|y^{\prime \prime} \|_{C([0,T];\mathbb{R}^m)^*} = 1$ when $\eta = 1$ and $\eta = 0$, respectively. Moreover, when $\eta = \frac{1}{2}$, $\|y^\prime + y^{\prime \prime}  \|_{C([0,T]; \mathbb{R}^m)^*} = 2$. Since $(C([0,T]; \mathbb{R}^m)^*, \|\cdot \|_{C([0,T]; \mathbb{R}^m)^*})$ is strictly convex, we must have $y^\prime = y^{\prime \prime} \in C([0,T];\mathbb{R}^m)^*$, which implies $\partial d_S(x) =\{y^\prime\}$, i.e., $\partial d_S(x)$ is a singleton, and $\|y^\prime\|_{C([0,T];\mathbb{R}^m)^*} = 1$. 
\end{remark}

\begin{lemma}\label{Lemma_4_3_1_2_2_1}
The distance function $d_S$ is strictly Hadamard differentiable on $C([0,T];\mathbb{R}^m) \setminus S$ with the Hadamard differential $D d_S$ satisfying $\|D d_S(x)\|_{C([0,T]; \mathbb{R}^m)^*} = \|y^\prime\|_{C([0,T]; \mathbb{R}^m)^*} = 1$ for all $x \in C([0,T];\mathbb{R}^m) \setminus S$. Consequently, $d_S(x)^2$ is strictly Hadamard differentiable on $C([0,T];\mathbb{R}^m) \setminus S$ with the Hadamard differential given by $D d_S(x)^2 = 2 d_S(x) D d_S(x)$ for $x \in C([0,T];\mathbb{R}^m) \setminus S$. Moreover, $d_S(x)^2$ is Fr\'echet differentiable on $S$ with the Fr\'echet differential being $D d_S(x)^2 = 0 \in C([0,T]; \mathbb{R}^m)^*$ for all $x \in S$.
\end{lemma}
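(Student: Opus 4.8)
The plan is to handle the three assertions in turn, leaning on what is already in place: $d_S$ is convex, continuous, and nonexpansive (Lemma \ref{Lemma_4_2}); its subdifferential $\partial d_S(x)$ is a nonempty, convex, weak-$^*$ compact set whose elements all have norm at most $1$; and, decisively, for $x \notin S$ Remark \ref{Remark_4_1} forces $\partial d_S(x)$ to be a singleton $\{y^\prime\}$ with $\|y^\prime\|_{C([0,T];\mathbb{R}^m)^*} = 1$. The strategy is then: (a) convert the single-valued subdifferential into strict Hadamard differentiability of $d_S$; (b) propagate this to $d_S^2$ on $C([0,T];\mathbb{R}^m)\setminus S$ by a chain rule; and (c) treat $x\in S$ separately by a direct squeeze using nonexpansiveness.

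For step (a), I would argue that since $d_S$ is convex and continuous, single-valuedness of its subdifferential at $x$ is equivalent to Gâteaux differentiability, with the derivative equal to the unique subgradient $y^\prime$. Because $d_S$ is moreover globally $1$-Lipschitz (nonexpansive), its Clarke generalized gradient coincides with the convex subdifferential and is therefore also the singleton $\{y^\prime\}$; by Clarke's characterization \cite[Proposition 2.2.4]{Clarke_book}, a function that is Lipschitz near $x$ and whose generalized gradient reduces to one point is strictly (Hadamard) differentiable there, with strict derivative $y^\prime$. This yields $D d_S(x) = y^\prime$ and $\|D d_S(x)\|_{C([0,T];\mathbb{R}^m)^*} = 1$ on $C([0,T];\mathbb{R}^m)\setminus S$. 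The value being exactly $1$ (not merely $\le 1$) can also be read off directly: for a near-minimizer $s_\epsilon \in S$ with $\|x-s_\epsilon\|_{C([0,T];\mathbb{R}^m)} \le d_S(x)+\epsilon$, the subgradient inequality gives $\langle y^\prime, s_\epsilon - x\rangle_{C_m^* \times C_m} \le d_S(s_\epsilon)-d_S(x) = -d_S(x)$, hence $\langle y^\prime, -(s_\epsilon-x)/\|s_\epsilon-x\|\rangle_{C_m^* \times C_m} \ge d_S(x)/(d_S(x)+\epsilon)$, and letting $\epsilon\downarrow 0$ forces $\|y^\prime\|_{C([0,T];\mathbb{R}^m)^*}\ge 1$.

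For step (b), on $C([0,T];\mathbb{R}^m)\setminus S$ I would write $d_S(\cdot)^2 = \phi\circ d_S$ with $\phi(u)=u^2$ of class $C^1$ on $\mathbb{R}$, and invoke the chain rule for strict Hadamard derivatives: since $d_S$ is strictly Hadamard differentiable by step (a) and $\phi$ is continuously differentiable at $d_S(x)>0$, the composite is strictly Hadamard differentiable with $D d_S(x)^2 = \phi^\prime(d_S(x))\, D d_S(x) = 2 d_S(x)\, D d_S(x)$. This is the step where the Hadamard (rather than mere Gâteaux) formulation pays off, as the chain rule is guaranteed precisely in that class.

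For step (c), at $x\in S$ we have $d_S(x)=0$, and nonexpansiveness (Lemma \ref{Lemma_4_2}) gives $d_S(x+h) = |d_S(x+h)-d_S(x)| \le \|h\|_{C([0,T];\mathbb{R}^m)}$, so $0\le d_S(x+h)^2 \le \|h\|_{C([0,T];\mathbb{R}^m)}^2$; hence $\frac{|d_S(x+h)^2 - d_S(x)^2 - 0|}{\|h\|_{C([0,T];\mathbb{R}^m)}} \le \|h\|_{C([0,T];\mathbb{R}^m)} \to 0$, proving Fréchet differentiability on $S$ with $D d_S(x)^2 = 0$. I expect the main obstacle to be step (a): one must pass cleanly from the single-valued convex subdifferential to strict Hadamard differentiability through the Lipschitz/Clarke machinery and confirm the unit-norm claim, the latter genuinely requiring the near-minimizer argument above because $C([0,T];\mathbb{R}^m)$ is non-reflexive and metric projections onto $S$ need not exist.
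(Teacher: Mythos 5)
Your proof is correct, and its skeleton matches the paper's: both arguments rest on Lemma \ref{Lemma_4_2} (convexity, continuity, nonexpansiveness) and Remark \ref{Remark_4_1} (for $x \notin S$ the subdifferential $\partial d_S(x)$ is a singleton $\{y^\prime\}$ of unit norm), followed by an abstract theorem converting ``Lipschitz plus singleton subdifferential'' into strict Hadamard differentiability, and both treat $x \in S$ by the same squeeze based on $d_S \equiv 0$ on $S$ and nonexpansiveness. The difference is in the routing. The paper disposes of both $d_S$ and $d_S(\cdot)^2$ on $C([0,T];\mathbb{R}^m) \setminus S$ in one stroke by citing \cite[Theorem 3.54]{Mordukhovich_book}; you instead pass through Gateaux differentiability of convex functions with singleton subdifferential, identify the convex subdifferential with Clarke's generalized gradient, invoke \cite[Proposition 2.2.4]{Clarke_book}, and then obtain $d_S(\cdot)^2$ by an explicit chain rule for strict Hadamard derivatives. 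Your route buys three things. First, Clarke's Proposition 2.2.4 holds in an arbitrary Banach space, so you sidestep the Asplund-space setting in which Mordukhovich's Theorem 3.54 is formulated; this is not an idle point, since $C([0,T];\mathbb{R}^m)$ is separable with nonseparable dual and hence not Asplund (for the convex function $d_S$ the conclusion is still true, but your citation applies verbatim). Second, your near-minimizer computation $\langle y^\prime, x - s_\epsilon \rangle_{C_m^* \times C_m} \geq d_S(x)$ genuinely \emph{proves} the unit-norm claim $\|D d_S(x)\|_{C([0,T];\mathbb{R}^m)^*} = 1$, whereas Remark \ref{Remark_4_1}, on which the paper's one-line proof leans, essentially presupposes it (it begins by considering $\|\eta y^\prime + (1-\eta) y^{\prime\prime}\|_{C([0,T];\mathbb{R}^m)^*} = 1$). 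Third, the explicit chain rule and the explicit squeeze at $x \in S$ make the second and third assertions of the lemma self-contained rather than delegated to a citation. The paper's version is shorter; yours is more robust and closes the small gaps the paper leaves to references.
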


\begin{proof}
The strictly Hadamard differentiability of $d_S(x)$ and $d_S(x)^2$ on $C([0,T];\mathbb{R}^m) \setminus S$ follows from on Lemma \ref{Lemma_4_2} and Remark \ref{Remark_4_1}, together with \cite[Theorem 3.54]{Mordukhovich_book}. The Fr\'echet differentiability of $d_S(x)^2$ on $S$ with $D d_S(x) = 0 \in C([0,T]; \mathbb{R}^m)^*$ for $x \in S$ follows by the fact that $d_S(x) = 0$ for $x \in S$ and that $d_S$ is nonexpansive shown in Lemma \ref{Lemma_4_2}. This completes the proof.
\end{proof}

\subsection{Ekeland Variational Principle}\label{Section_4_2}

Recall that the pair $(\overline{x}(\cdot),\overline{u}(\cdot)) \in C[0,T];\mathbb{R}^n) \times \mathcal{U}^p[0,T]$ is the optimal pair of \textbf{(P)}. We also write $\overline{x}(\cdot;\overline{x}_0,\overline{u}) := \overline{x}(\cdot)$ to emphasize the dependence of the state equation $\overline{x}(\cdot)$ on the optimal initial condition and control $(\overline{x}_0, \overline{u}(\cdot)) \in \mathbb{R}^n \times \mathcal{U}^p[0,T]$. Note that the pair $(\overline{x}_0,\overline{x}(\cdot;\overline{x}_0,\overline{u}))$ holds the state constraints in (\ref{eq_3}). The optimal cost of \textbf{(P)} under $(\overline{x}(\cdot),\overline{u}(\cdot))$ can be written by $J(\overline{x}_0, \overline{u}(\cdot))$.

Recall the distance functions $d_F$ and $d_S$ in Section \ref{Section_4_1}. For $\epsilon > 0$, we define the penalized objective functional as follows:
\begin{align}
\label{eq_4_5}
J_{\epsilon}(x_0,u(\cdot)) & = \Biggl ( \Bigl ( \bigl [ J(x_0,u(\cdot)) - J(\overline{x}_0, \overline{u}(\cdot)) + \epsilon \bigr ]^+ \Big)^2 + d_F \Bigl ( \begin{bmatrix}
x_0 \\
x(T) 	
 \end{bmatrix} \Bigr )^2 + d_S \Bigl ( \gamma(x(\cdot)) \Bigr )^2 \Biggr )^{\frac{1}{2}}.
\end{align}
We can easily observe that $J_{\epsilon}(\overline{x}_0, \overline{u}(\cdot)) = \epsilon > 0$, i.e., $(\overline{x}_0, \overline{u}(\cdot)) \in \mathbb{R}^n \times \mathcal{U}^p[0,T]$ is the $\epsilon$-optimal solution of (\ref{eq_4_5}). Define the Ekeland metric $\widehat{d}:(\mathbb{R}^n \times \mathcal{U}^p[0,T]) \times (\mathbb{R}^n \times \mathcal{U}^p[0,T]) \rightarrow \mathbb{R}_{+}$ as follows:
\begin{align}	
\label{eq_4_6}
\widehat{d} \Bigl ((x_0,u(\cdot)),(\tilde{x}_0,\tilde{u}(\cdot)) \Bigr ) := |x_0 - \tilde{x}_0 | + \overline{d}(u(\cdot),\tilde{u}(\cdot)),
\end{align}
where
\begin{align}
\label{eq_4_7}
\overline{d}(u(\cdot),\tilde{u}(\cdot)) := |\{t \in [0,T]~|~ u(t) \neq \tilde{u}(t) \} |,~ \forall u(\cdot),\tilde{u}(\cdot) \in \mathcal{U}^p[0,T].
\end{align}
It is easy to see that $(\mathbb{R}^n \times \mathcal{U}^p[0,T],\widehat{d})$ is a complete metric space \cite[Lemma 7.2]{Ekeland_JMAA_1974}. By Assumption \ref{Assumption_2_2}, together with Lemmas \ref{Lemma_4_1} and \ref{Lemma_4_2}, $J_{\epsilon}(x_0,u)$ in (\ref{eq_4_5}) is a continuous functional on $(\mathbb{R}^n \times \mathcal{U}^p[0,T],\widehat{d})$.

In view of (\ref{eq_4_5})-(\ref{eq_4_7}), we have
\begin{align}
\label{eq_4_8}
\begin{cases}
J_{\epsilon}(x_0,u(\cdot)) > 0,~ \forall (x_0,u(\cdot)) \in \mathbb{R}^n \times \mathcal{U}^p[0,T], \\
J_{\epsilon}(\overline{x}_0, \overline{u}(\cdot))  = \epsilon \leq \inf_{(x_0,u(\cdot)) \in \mathbb{R}^n \times \mathcal{U}^p[0,T]} J_{\epsilon}(x_0,u(\cdot)) + \epsilon.
\end{cases}
\end{align}
By the Ekeland variational principle \cite{Ekeland_JMAA_1974}, there exists a pair $(x_0^\epsilon,u^\epsilon) \in \mathbb{R}^n \times \mathcal{U}^p[0,T]$ such that
\begin{align}
\label{eq_4_9_1_1_1}
\widehat{d}\Bigl ( (x_0^\epsilon,u^\epsilon(\cdot)),(\overline{x}_0,\overline{u}(\cdot)) \Bigr ) \leq \sqrt{\epsilon},		
\end{align}
and
\begin{align}
\label{eq_4_9}
\begin{cases}
J_{\epsilon}(x_0^\epsilon,u^\epsilon(\cdot)) \leq J_{\epsilon}(\overline{x}_0, \overline{u}(\cdot))  = \epsilon, \\
J_{\epsilon}(x_0^\epsilon,u^\epsilon(\cdot)) \leq J_{\epsilon}(x_0,u(\cdot)) + \sqrt{\epsilon} \widehat{d} \Bigl ((x_0^\epsilon,u^\epsilon(\cdot)),(x_0,u(\cdot)) \Bigr ),~ \forall (x_0,u(\cdot)) \in \mathbb{R}^n \times \mathcal{U}^p[0,T].
\end{cases}
\end{align}
As $\widehat{d} ((x_0^\epsilon,u^\epsilon(\cdot)),(x_0^\epsilon,u^\epsilon(\cdot))) = 0$, the above condition implies that the pair $(x_0^\epsilon,u^\epsilon(\cdot)) \in \mathbb{R}^n \times \mathcal{U}^p[0,T]$ is the minimizing solution of the following Ekeland objective functional over $\mathbb{R}^n \times \mathcal{U}^p[0,T]$:
\begin{align}
\label{eq_4_10}
J_{\epsilon}(x_0,u(\cdot)) + 	\sqrt{\epsilon} \widehat{d} \Bigl ((x_0^\epsilon,u^\epsilon(\cdot)),(x_0,u(\cdot)) \Bigr ).
\end{align}
We observe that (\ref{eq_4_10}) is the unconstrained control problem. By notation, we write $(x^\epsilon(\cdot),u^\epsilon(\cdot)) := (x^{\epsilon}(\cdot;x_0^\epsilon,u^\epsilon),u^\epsilon(\cdot))  \in C([0,T];\mathbb{R}^n) \times \mathcal{U}^p[0,T]$, where $x^{\epsilon}(\cdot;x_0^\epsilon,u^\epsilon)$ is the state trajectory of (\ref{eq_1}) under $(x_0^\epsilon,u^\epsilon(\cdot)) \in \mathbb{R}^n \times \mathcal{U}^p[0,T]$.

\subsection{Spike Variations and First Variational Equation}\label{Section_4_3}

In the previous subsection, we have obtained the $\epsilon$-optimal solution to \textbf{(P}), which is also the optimal solution to the Ekeland objective functional in (\ref{eq_4_10}). The next step is to derive the necessary condition for $(x_0^\epsilon,u^\epsilon(\cdot)) \in \mathbb{R}^n \times \mathcal{U}^p[0,T]$. We employ the spike variation technique, as $U$ does not have any algebraic structure (hence, it is impossible to use standard (convex) variations).

For $\delta \in (0,1)$, define
\begin{align*}
\mathcal{E}_{\delta} := \{ E \in [0,T]~|~ |E| = \delta T\}	,
\end{align*}
where $|E|$ denotes the Lebesgue measure of $E$. For $E_{\delta} \in \mathcal{E}_{\delta}$, we introduce the spike variation associated with $u^\epsilon$, i.e., the optimal solution of (\ref{eq_4_10}): 
\begin{align*}
u^{\epsilon,\delta}(s) := \begin{cases}
 	u^\epsilon(s), & s \in [0,T] \setminus E_{\delta}, \\
	u(s), & s \in  E_{\delta},
 \end{cases}	
\end{align*}
where $u(\cdot) \in \mathcal{U}^p[0,T]$. Clearly, $u^{\epsilon,\delta}(\cdot) \in \mathcal{U}^p[0,T]$. Moreover, by definition of $\overline{d}$ in (\ref{eq_4_7}),
\begin{align}
\label{eq_4_12_345234231}
	\overline{d}(u^{\epsilon,\delta}(\cdot),u^\epsilon(\cdot)) \leq | E_{\delta}| = \delta T.
\end{align}
Consider also the variation of the initial state given by $x_0 + \delta a $, where $a \in \mathbb{R}^n$. By notation, let us define the perturbed state equation by
\begin{align}
\label{eq_5_234235457634545245234523}
x^{\epsilon,\delta}(\cdot) := x^{\epsilon,\delta}(\cdot; x_0^\epsilon + \delta a,u^{\epsilon,\delta}) \in C([0,T];\mathbb{R}^n).
\end{align}
In fact, $x^{\epsilon,\delta}(\cdot)$ is the state trajectory of (\ref{eq_1}) under $(x_0^\epsilon + \delta a, u^{\epsilon,\delta}(\cdot)) \in \mathbb{R}^n \times \mathcal{U}^p[0,T]$. We also recall $(x^\epsilon(\cdot),u^\epsilon(\cdot)) := (x^{\epsilon}(\cdot;x_0^\epsilon,u^\epsilon),u^\epsilon(\cdot)) \in C([0,T];\mathbb{R}^n) \times \mathcal{U}^p[0,T]$, where $x^{\epsilon}$ is the state trajectory of (\ref{eq_1}) under $(x_0^\epsilon,u^\epsilon(\cdot)) \in \mathbb{R}^n \times \mathcal{U}^p[0,T]$. Then by (\ref{eq_4_9}) and (\ref{eq_4_12_345234231}), we have
\begin{align}
\label{eq_4_13_1_1_1_2}
- \sqrt{\epsilon} (|a| +  T) \leq \frac{1}{\delta} \Bigl ( J_{\epsilon}(x_0^\epsilon + \delta a, u^{\epsilon,\delta}(\cdot)) - 	J_{\epsilon}(x_0^\epsilon, u^{\epsilon}(\cdot))  \Bigr ).
\end{align}

\begin{lemma}\label{Lemma_4_4_2342341234234}
The following result holds:
\begin{align*}
\sup_{t \in [0,T]} \Bigl | x^{\epsilon,\delta}(t) - x^{\epsilon}(t) - \delta  Z^{\epsilon}(t) \Bigr | = o(\delta),
\end{align*}
where $Z^\epsilon$ is the solution to the first variational equation related to the optimal pair $(x_0^\epsilon, u^{\epsilon}(\cdot)) \in \mathbb{R}^n \times \mathcal{U}^p[0,T]$ given by
\begin{align*}
Z^{\epsilon}(t) & = a + \int_0^t \Bigl [ \frac{f_x(t,s,x^\epsilon(s),u^\epsilon(s))}{(t-s)^{1-\alpha}} Z^{\epsilon}(s) \dd s 	 + \frac{\widehat{f}(t,s) }{(t-s)^{1-\alpha}}  \Bigr ] \dd s \\
&~~~ + \int_0^t \Bigl [ g_x(t,s,x^\epsilon(s),u^\epsilon(s)) Z^{\epsilon}(s) + \widehat{g}(t,s) \Bigr ] \dd s,~  \textrm{a.e.}~ t \in [0,T], \nonumber
\end{align*}
with for any $u(\cdot) \in \mathcal{U}^p[0,T]$, 
\begin{align*}
\begin{cases}
	\widehat{f}(t,s) := f(t,s,x^\epsilon(s),u(s)) - f(t,s,x^\epsilon(s),u^\epsilon(s)), \\
	\widehat{g}(t,s) := g(t,s,x^\epsilon(s),u(s)) - g(t,s,x^\epsilon(s),u^\epsilon(s)). 
	\\
\end{cases}	
\end{align*}
\end{lemma}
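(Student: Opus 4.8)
The plan is to bound the remainder $\eta^{\epsilon,\delta}(\cdot):=x^{\epsilon,\delta}(\cdot)-x^{\epsilon}(\cdot)-\delta Z^{\epsilon}(\cdot)$ directly, after first checking that the raw increment $\zeta^{\epsilon,\delta}(\cdot):=x^{\epsilon,\delta}(\cdot)-x^{\epsilon}(\cdot)$ is already of order $\delta$. For the baseline estimate I would subtract the two copies of (\ref{eq_1}) satisfied by $x^{\epsilon,\delta}$ and $x^{\epsilon}$, isolating the initial-state perturbation $\delta a$ and the control perturbation, which is supported on $E_\delta$ and therefore contributes an integral over a set of measure $\delta T$. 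Bounding the kernel differences by the singular/nonsingular Lipschitz weight $K(\cdot)$ of Assumption \ref{Assumption_2_1}(iii) and feeding the result into the generalized Gronwall inequality of Appendix \ref{Appendix_A} — exactly as in the proof of Lemma \ref{Lemma_2_1}(ii) — yields $\|\zeta^{\epsilon,\delta}\|_{L^p}\le C\delta$, which is upgraded to a sup-norm control through the $C$-space well-posedness of Lemma \ref{Lemma_2_1}(i). This $O(\delta)$ bound is what makes the linearization legitimate.

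Next I would write the integral equation satisfied by $\eta^{\epsilon,\delta}$, obtained by subtracting the equations for $x^{\epsilon,\delta}$, $x^{\epsilon}$, and $\delta Z^{\epsilon}$, and split each kernel increment into a state part and a control part, $f(t,s,x^{\epsilon,\delta},u^{\epsilon,\delta})-f(t,s,x^{\epsilon},u^{\epsilon})=[f(t,s,x^{\epsilon,\delta},u^{\epsilon,\delta})-f(t,s,x^{\epsilon},u^{\epsilon,\delta})]+[f(t,s,x^{\epsilon},u^{\epsilon,\delta})-f(t,s,x^{\epsilon},u^{\epsilon})]$, and similarly for $g$. For the state part I would invoke the $C^1$-in-$x$ hypothesis (Assumption \ref{Assumption_2_1}(iv)) and the fundamental theorem of calculus to write $f(t,s,x^{\epsilon,\delta},u^{\epsilon,\delta})-f(t,s,x^{\epsilon},u^{\epsilon,\delta})=\int_0^1 f_x(t,s,x^{\epsilon}+\tau\zeta^{\epsilon,\delta},u^{\epsilon,\delta})\,d\tau\,\zeta^{\epsilon,\delta}$; replacing the averaged Jacobian by $f_x(t,s,x^{\epsilon},u^{\epsilon})$ produces precisely the $f_xZ^{\epsilon}$ term of the variational equation, plus an error that (by boundedness and continuity of $f_x$ together with $\|\zeta^{\epsilon,\delta}\|_\infty\to 0$) is a vanishing factor times $\zeta^{\epsilon,\delta}=O(\delta)$. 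Since $u^{\epsilon,\delta}=u^{\epsilon}$ off $E_\delta$, the control part collapses to an integral over $E_\delta\cap[0,t]$.

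The crux, and the step I expect to be the main obstacle, is the spike-variation averaging: showing that the control-perturbation integral over $E_\delta$ matches, up to $o(\delta)$, the \emph{full} integral appearing in the variational equation, i.e. $\int_{E_\delta\cap[0,t]}\frac{\widehat f(t,s)}{(t-s)^{1-\alpha}}\,ds=\delta\int_0^t\frac{\widehat f(t,s)}{(t-s)^{1-\alpha}}\,ds+o(\delta)$ \emph{uniformly in} $t\in[0,T]$, and likewise for $\widehat g$. This is genuinely harder than in the purely nonsingular case because the weight $(t-s)^{-(1-\alpha)}$ blows up at $s=t$, so the family $\{E_\delta\}$ must be chosen to average a singular integrand simultaneously for all $t$. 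I would resolve it by constructing $\{E_\delta\}$ through a Lebesgue-point/density argument (the averaging lemma of the appendices), approximating $\widehat f$ and $\widehat g$ in $s$ by continuous functions, and exploiting the $L^{p/(\alpha p-1)+}$-integrability of the weight for $p>1/\alpha$, which guarantees the singular integrals are well defined and uniformly integrable so that the limit passes uniformly in $t$.

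Finally, I would assemble the integral inequality for $|\eta^{\epsilon,\delta}(t)|$: the state-error terms appear under singular and nonsingular Lipschitz weights acting on $\eta^{\epsilon,\delta}$ itself, while the Jacobian remainders and the averaging discrepancy form an inhomogeneous term that is $o(\delta)$ uniformly in $t$. Applying the generalized Gronwall inequality of Appendix \ref{Appendix_A} once more propagates the $o(\delta)$ inhomogeneity to $\sup_{t\in[0,T]}|\eta^{\epsilon,\delta}(t)|=o(\delta)$, which is the assertion. Throughout, the estimates live in the $C$-norm but are routed through the $L^p$ bounds of Lemma \ref{Lemma_2_1}, which is exactly why the exponent condition $p>1/\alpha$ and the singular-kernel Gronwall inequality are indispensable to the argument.
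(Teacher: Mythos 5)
Your overall architecture --- Taylor expansion with averaged Jacobians, an averaging lemma to select the spike sets, and the singular-kernel Gronwall inequality (Lemma \ref{Lemma_A_5}) --- matches the paper's proof, but your first step contains a genuine gap on which the rest of your argument leans. You claim the baseline estimate $\|x^{\epsilon,\delta}(\cdot)-x^{\epsilon}(\cdot)\|\le C\delta$ follows by Gronwall ``exactly as in Lemma \ref{Lemma_2_1}(ii)'' because the control perturbation ``contributes an integral over a set of measure $\delta T$.'' That inference is false for an arbitrary $E_\delta\in\mathcal{E}_\delta$: the perturbation term has the form $\int_{E_\delta\cap[0,t]}\widetilde\psi(s)(t-s)^{\alpha-1}\,\dd s$, where $\widetilde\psi$ (the $L^p$ majorant of $|\widehat f|+|\widehat g|$) is in general unbounded, and if $E_\delta$ concentrates near the diagonal $s=t$ this integral is of order $\delta^{\alpha}$ even for bounded $\widetilde\psi$, since $\int_{t-\delta T}^{t}(t-s)^{\alpha-1}\,\dd s=(\delta T)^{\alpha}/\alpha$; H\"older gives only rates of the type $O(\delta^{1/q})$ with $q>1/\alpha$, which is precisely what the paper's own proof obtains at this stage (it never claims more than $C(|\delta a|+|E_\delta|^{1/q})\to 0$). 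The $O(\delta)$ baseline becomes true only \emph{after} $E_\delta$ has been chosen through the averaging lemmas (Lemmas \ref{Lemma_D_1} and \ref{Lemma_D_2}), a construction you introduce only in your third paragraph and for a different purpose. As written, your linearization step --- ``a vanishing factor times $\zeta^{\epsilon,\delta}=O(\delta)$'' --- therefore rests on an estimate not yet available at that point: the ordering of your argument is circular.

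The gap is repairable in two ways. Either fix $E_\delta$ at the outset by applying Lemmas \ref{Lemma_D_1}--\ref{Lemma_D_2} simultaneously to $\widehat f$, $\widehat g$, and the majorant $\widetilde\psi$ (the lemmas allow this, since $\widetilde\psi$ does not depend on $t$ and lies in $L^p$ with $p>1/\alpha$), after which your baseline and the remainder of your argument go through. Or, more cleanly, adopt the paper's bookkeeping, which dispenses with the strong baseline altogether: work with $Z^{\epsilon,\delta}:=(x^{\epsilon,\delta}-x^{\epsilon})/\delta$ and subtract the equation for $Z^{\epsilon}$ from the equation for $Z^{\epsilon,\delta}$, so that the Jacobian discrepancy $f_x^{\epsilon,\delta}-f_x$ multiplies the \emph{bounded} function $Z^{\epsilon}$ rather than the increment $x^{\epsilon,\delta}-x^{\epsilon}$. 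Then only $\sup_{t}|x^{\epsilon,\delta}(t)-x^{\epsilon}(t)|\to 0$ (at any rate) is required, which the crude H\"older--Gronwall bound already supplies; dominated convergence handles the Jacobian terms, the averaging lemmas handle the spike terms, and one final application of Lemma \ref{Lemma_A_5} completes the proof.
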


\begin{proof}
By definition and (\ref{eq_5_234235457634545245234523}), 
\begin{align*}
	x^{\epsilon,\delta}(t) = x(t;x_0^\epsilon + \delta a,u^{\epsilon,\delta}) &= (x_0^{\epsilon} + \delta a) + \int_0^t \frac{f(t,s,x^{\epsilon,\delta}(s),u^{\epsilon,\delta}(s))}{(t-s)^{1-\alpha}} \dd s + \int_0^t g(t,s,x^{\epsilon,\delta}(s),u^{\epsilon,\delta}(s)) \dd s, \\
	x^{\epsilon}(t)  = x(t;x_0^\epsilon,u^{\epsilon})  & = x_0^{\epsilon} + \int_0^t \frac{f(t,s,x^{\epsilon}(s),u^{\epsilon}(s))}{(t-s)^{1-\alpha}} \dd s + \int_0^t g(t,s,x^{\epsilon}(s),u^{\epsilon}(s)) \dd s.
\end{align*}
For $\delta \in (0,1)$, let
\begin{align}
\label{eq_e_1}
Z^{\epsilon,\delta}(t) := \frac{x^{\epsilon,\delta}(t) - x^{\epsilon}(t)}{\delta},	~ t \in [0,T],
\end{align}
where based on the Taylor expansion, $Z^{\epsilon,\delta}$ holds
\begin{align*}
Z^{\epsilon,\delta}(t) & = a + \int_0^t  \Bigl [ \frac{f_{x}^{\epsilon,\delta}(t,s)}{(t-s)^{1-\alpha}} Z^{\epsilon,\delta}(s) + \frac{\mathds{1}_{E_{\delta}}(s)}{\delta} \frac{\widehat{f}(t,s) }{(t-s)^{1-\alpha}}  \Bigr ] \dd s \\
&~~~ + \int_0^t  \Bigl [ g_{x}^{\epsilon,\delta}(t,s) Z^{\epsilon,\delta}(s) + \frac{\mathds{1}_{E_{\delta}}(s)}{\delta} \widehat{g}(t,s) \Bigr ] \dd s,~ t \in [0,T]
\end{align*}
with $f_{x}^{\epsilon,\delta}$ and $g_{x}^{\epsilon,\delta}$ defined by
\begin{align*}
f_{x}^{\epsilon,\delta}(t,s) & := \int_0^1 f_x(t,s,x^{\epsilon}(s) + r (x^{\epsilon,\delta}(s) - x^{\epsilon}(s)),u^{\epsilon,\delta}(s)) \dd r, \\
g_{x}^{\epsilon,\delta}(t,s) & := \int_0^1 g_x(t,s,x^{\epsilon}(s) + r (x^{\epsilon,\delta}(s) - x^{\epsilon}(s)),u^{\epsilon,\delta}(s)) \dd r.
\end{align*}

By Assumptions \ref{Assumption_2_1} and \ref{Assumption_2_2}, we have
\begin{align}
\label{eq_4_16_4534345345}
\begin{cases}
|\widehat{f}(t,s)| + |\widehat{g}(t,s)|  \leq 4 K_0(s) + 4 K(s)|x^\epsilon(s)| + K(s)(\rho(u(s),u_0) + \rho(u^\epsilon(s),u_0)) =: \widetilde{\psi}(s), \\
|f_x^{\epsilon,\delta}(t,s)| + |g_x^{\epsilon,\delta}(s)|  \leq K(s).
\end{cases}
\end{align}
Let $q > \frac{1}{\alpha}$,
and we replace $p$ by $q$ in Lemma \ref{Lemma_A_5}. Recall $L^{l+}([0,T];\mathbb{R}^n) := \cup_{r > l} L^{r}([0,T];\mathbb{R}^n)$ for $1 \leq l < \infty$, and the $L^p$-spaces of this paper are induced by the finite measure on $([0,T],\mathcal{B}([0,T]))$. Since $p > \alpha p > 1$, it holds that $K(\cdot) \in L^{\frac{p}{\alpha p - 1} + } ([0,T];\mathbb{R}) \subset   L^{  \frac{p}{p-1} +}([0,T];\mathbb{R})$. Hence, we can choose $q$ so that $K(\cdot) \in L^q([0,T];\mathbb{R}) \subset L^{\frac{p}{p-1}}([0,T];\mathbb{R}) $ and $x^{\epsilon}(\cdot) \in L^p([0,T];\mathbb{R}^n) \subset L^{\frac{pq}{q-p}}([0,T];\mathbb{R}^n)$. It then follows from the H\"older's inequality that
\begin{align*}
\Bigl ( \int_0^T |K(s)^p x^{\epsilon}(s)^p| \dd s \Bigr)^{\frac{1}{p}}  \leq \Bigl ( \int_0^T |K(s)|^q \dd s \Bigr )^{\frac{1}{q}} \Bigl ( \int_0^T |x^{\epsilon}(s)|^{\frac{pq}{q-p}} \dd s \Bigr )^{\frac{q-p}{qp}} & < \infty, \\
\Bigl ( \int_0^T |K(s)^p (\rho(u(s),u_0) + \rho(u^\epsilon(s),u_0))^p| \dd s \Bigr)^{\frac{1}{p}} &< \infty.
\end{align*}
Therefore, as $K_0(\cdot) \in L^{\frac{1}{\alpha} + } ([0,T];\mathbb{R}) \subset L^{p+} ([0,T];\mathbb{R})$, $\widetilde{\psi}(\cdot) \in L^p([0,T];\mathbb{R}) \subset L^{\frac{q}{q-1}}([0,T];\mathbb{R})$. 

Based on Assumption \ref{Assumption_2_1} and (\ref{eq_4_16_4534345345}), we can show that
\begin{align}	
\label{eq_e_2}
|x^{\epsilon,\delta}(t) - x^{\epsilon}(t) | 
& \leq  b(t) + \int_0^t \frac{K(s)}{(t-s)^{1-\alpha}} |x^{\epsilon,\delta}(t) - x^{\epsilon}(t) |\dd s + \int_0^t K(s) |x^{\epsilon,\delta}(t) - x^{\epsilon}(t) |\dd s, 
\end{align}
where
\begin{align*}
b(t) = |\delta a  |_{\mathbb{R}^n} + \int_0^t \mathds{1}_{E_{\delta}}(s) \frac{\widetilde{\psi}(s)} {(t-s)^{1-\alpha}} \dd s + \int_0^t \mathds{1}_{E_{\delta}}(s) \widetilde{\psi}(s) \dd s.   
\end{align*}
We let $\widetilde{\psi}(t,\cdot) := \widetilde{\psi}(\cdot)$ in (\ref{eq_4_16_4534345345}). As $\widetilde{\psi}(0,\cdot) \in L^p([0,T];\mathbb{R}) \subset L^{\frac{q}{q-1}}([0,T];\mathbb{R})$, by Lemmas \ref{Lemma_A_3} and \ref{Lemma_A_4} (and using Assumption \ref{Assumption_2_1}), we have $b(\cdot) \in L^p([0,T];\mathbb{R}^n) \subset L^{\frac{q}{q-1}}([0,T];\mathbb{R}^n)$. Note also that we can choose $q$ so that $x^{\epsilon}(\cdot) \in L^p([0,T];\mathbb{R}^n) \subset L^{\frac{q}{q-1}}([0,T];\mathbb{R}^n)$ and $|x^{\epsilon,\delta}(\cdot) - x^{\epsilon}(\cdot) |_{\mathbb{R}^n} \in L^p([0,T];\mathbb{R}) \subset L^{\frac{q}{q-1}}([0,T];\mathbb{R})$. 
In addition, from Lemmas \ref{Lemma_A_3} and \ref{Lemma_A_4}, there is a constant $C \geq 0$ such that
\begin{align*}	
\Biggl | \int_0^t \mathds{1}_{E_{\delta}}(s) \widetilde{\psi}(s) \dd s \Biggr | + \Biggl | \int_0^t \mathds{1}_{E_{\delta}}(s) \frac{\widetilde{\psi}(s)} {(t-s)^{1-\alpha}} \dd s \Biggr | \leq C |E_{\delta}|^{\frac{1}{q}}.
\end{align*}
Then applying Lemma \ref{Lemma_A_5} to (\ref{eq_e_2}) yields
\begin{align}
\label{eq_5_4564563452342342342342342}
|x^{\epsilon,\delta}(t) - x^{\epsilon}(t) |_{\mathbb{R}^n} & \leq  b(t) + C \int_0^t \frac{K(s)}{(t-s)^{1-\alpha}} b(s) \dd s + C \int_0^t K(s) b(s) \dd s \\
& \leq C \Bigl ( |\delta a|_{\mathbb{R}^n} + |E_{\delta}|^{\frac{1}{q}} \Bigr )~ \rightarrow 0,~ \textrm{as $\delta \downarrow 0$ for all $ t\in [0,T]$.} \nonumber
\end{align}

On the other hand, since $Z^{\epsilon}$ is linear, by Lemma \ref{Lemma_2_1} (see also the results in Appendix \ref{Appendix_B}), it admits a unique solution in $C([0,T];\mathbb{R}^n)$. Hence, as
\begin{align*}
|Z^{\epsilon}(t)| \leq |a| + 	\int_0^t \frac {K(s)}{(t-s)^{1-\alpha}}  |Z^{\epsilon}(s)| \dd s + \int_0^t \frac{\widetilde{\psi}(s)}{(t-s)^{1-\alpha}} \dd s + \int_0^t K(s) |Z^{\epsilon}(s)| \dd s + \int_0^t \widetilde{\psi}(s) \dd s,
\end{align*}
we use Lemmas \ref{Lemma_A_3}-\ref{Lemma_A_5} to get
\begin{align}
\label{eq_e_3}
|Z^{\epsilon}(t)| & \leq  \hat{b}(t) + C \int_0^t	\frac{K(s) }{(t-s)^{1-\alpha}}  \hat{b}(s) \dd s + C \int_0^t K(s) \hat{b}(s) \dd s  \leq C \Bigl ( |a|_{\mathbb{R}^n} + \|\widetilde{\psi}(\cdot)\|_{L^p([0,T];\mathbb{R})} \Bigr ),
\end{align}
where $\hat{b}(t) := 	|a|_{\mathbb{R}^n} + \int_0^t \frac{\widetilde{\psi}(s)}{(t-s)^{1-\alpha}} \dd s  + \int_0^t \widetilde{\psi}(s) \dd s$.

We obtain
\begin{align}
\label{eq_4_17_2343234112}
Z^{\epsilon,\delta}(t)	- Z^{\epsilon}(t) & = \int_0^t \frac{f_x^{\epsilon,\delta}(t,s)}{(t-s)^{1-\alpha}} \Bigl [ Z^{\epsilon,\delta}(s) - Z^{\epsilon}(s) \Bigr ] \dd s + \int_0^t \Bigl ( \frac{\mathds{1}_{E_{\delta}}(s)}{\delta} - 1 \Bigr ) \frac{\widehat{f}(t,s)}{(t-s)^{1-\alpha}} \dd s \\
&~~~ + \int_0^t \frac{f_x^{\epsilon,\delta}(t,s) - f_x(t,s,x^{\epsilon}(s),u^{\epsilon}(s))}{(t-s)^{1-\alpha}} Z^{\epsilon}(s) \dd s   \nonumber\\
&~~~ + \int_0^t g_x^{\epsilon,\delta}(t,s) \Bigl [ Z^{\epsilon,\delta}(s) - Z^{\epsilon}(s) \Bigr ] \dd s + \int_0^t \Bigl ( \frac{\mathds{1}_{E_{\delta}}(s)}{\delta} - 1 \Bigr ) \widehat{g}(t,s) \dd s \nonumber\\
&~~~ + \int_0^t \Bigl [ g_x^{\epsilon,\delta}(t,s) - g_x(t,s,x^{\epsilon}(s),u^{\epsilon}(s)) \Bigr ] Z^{\epsilon}(s) \dd s,~ t \in [0,T]. \nonumber
\end{align}
Notice that
\begin{align*}
\Biggl | \frac{f_x^{\epsilon,\delta}(t,s) - f_x(t,s,x^{\epsilon}(s),u^{\epsilon}(s))}{(t-s)^{1-\alpha}} Z^{\epsilon}(s) \Biggr |_{\mathbb{R}^n} & \leq \frac{4 K(s)}{(t-s)^{1-\alpha}} |Z^{\epsilon}(s)|,~ \forall s \in [0,t), \\
\Biggl | \Bigl [ g_x^{\epsilon,\delta}(t,s) - g_x(t,s,x^{\epsilon}(s),u^{\epsilon}(s)) \Bigr ] Z^{\epsilon}(s) \Biggr |_{\mathbb{R}^n} & \leq 4 K(s) |Z^{\epsilon}(s)|,~ \forall s \in [0,t].
\end{align*}
where $\lim_{\delta \downarrow 0} |f_x^{\epsilon,\delta}(t,s) - f_x(t,s,x^{\epsilon}(s),u^{\epsilon}(s))| = 0$ and $\lim_{\delta \downarrow 0} |g_x^{\epsilon,\delta}(t,s) - g_x(t,s,x^{\epsilon}(s),u^{\epsilon}(s))| = 0$. In addition, using (\ref{eq_e_3}), we get
\begin{align*}
\int_0^T	\frac{4 K(s)}{(t-s)^{1-\alpha}} |Z^{\epsilon}(s)| \dd s < \infty,~ \int_0^T 4 K(s) |Z^{\epsilon}(s)| \dd s < \infty.
\end{align*}

For convenience, define
\begin{align*}	
b^{(1,1)}(t) & := \int_0^t \frac{f_x^{\epsilon,\delta}(t,s) - f_x(t,s,x^{\epsilon}(s),u^{\epsilon}(s))}{(t-s)^{1-\alpha}} Z^{\epsilon}(s) \dd s \\
b^{(2,1)}(t) & := \int_0^t \Bigl [ g_x^{\epsilon,\delta}(t,s) - g_x(t,s,x^{\epsilon}(s),u^{\epsilon}(s)) \Bigr ] Z^{\epsilon}(s) \dd s \\
b^{(1,2)}(t) & := \int_0^t \Bigl ( \frac{\mathds{1}_{E_{\delta}}(s)}{\delta} - 1 \Bigr ) \frac{\widehat{f}(t,s)}{(t-s)^{1-\alpha}} \dd s \\
b^{(2,2)}(t) & := \int_0^t \Bigl ( \frac{\mathds{1}_{E_{\delta}}(s)}{\delta} - 1 \Bigr ) \widehat{g}(t,s)  \dd s.
\end{align*}
By the dominated convergence theorem, it follows that
\begin{align*}	
\lim_{\delta \downarrow 0} b^{(1,1)}(t)  = 0, ~ \lim_{\delta \downarrow 0} b^{(2,1)}(t) = 0,~ \forall t \in [0,T].
\end{align*}
In addition, by letting
\begin{align*}
\phi(t,s) = \widehat{g}(t,s),~~~~ \psi(t,s) = \widehat{f}(t,s),
\end{align*} 
and then invoking Lemmas \ref{Lemma_D_1} and \ref{Lemma_D_2} in Appendix \ref{Appendix_D} (by Assumptions \ref{Assumption_2_1} and \ref{Assumption_2_2}, together with Remark \ref{Remark_B_1}, $\psi$ holds (\ref{eq_d_1}) in Appendix \ref{Appendix_D}), for any $\delta \in (0,1)$, there exists an $E_{\delta} \in \mathcal{E}_{\delta}$ such that
\begin{align*}
|b^{(1,2)}(t)|_{\mathbb{R}^n}  \leq \delta,~ |b^{(2,2)}(t)|_{\mathbb{R}^n} &  \leq \delta,~ \forall t \in [0,T].
\end{align*}

With $b^{(1)}(\cdot) := b^{(1,1)}(\cdot) + b^{(2,1)}(\cdot)$ and $b^{(2)}(\cdot) := b^{(1,2)}(\cdot) + b^{(2,2)}(\cdot)$ in (\ref{eq_4_17_2343234112}), we then have
\begin{align*}
|Z^{\epsilon,\delta}(t)	- Z^{\epsilon}(t)| & \leq b^{(1)}(t) + b^{(2)}(t)  + \int_0^t \frac{K(s)}{(t-s)^{1-\alpha}} \Bigl [ Z^{\epsilon,\delta}(s) - Z^{\epsilon}(s) \Bigr ] \dd s \\
&~~~ +  \int_0^t K(s) \Bigl [ Z^{\epsilon,\delta}(s) - Z^{\epsilon}(s) \Bigr ] \dd s,~ t \in [0,T],
\end{align*}
and by applying the same technique as above and using Lemma \ref{Lemma_A_5}, 
\begin{align*}
|Z^{\epsilon,\delta}(t)	- Z^{\epsilon}(t)| & \leq  b^{(1)}(t) + b^{(2)}(t) + C \int_0^t \frac{K(s)}{(t-s)^{1-\alpha}} \Bigl [ b^{(1)}(s) + b^{(2)}(s) \Bigr ] \dd s \\
&~~~ + C \int_0^t K(s) \Bigl [ b^{(1)}(s) + b^{(2)}(s) \Bigr ] \dd s,~ t \in [0,T].
\end{align*}
Hence, the dominated convergence theorem implies that
\begin{align*}	
\lim_{\delta \downarrow 0} |Z^{\epsilon,\delta}(t)	- Z^{\epsilon}(t)|_{\mathbb{R}^n} = 0,~ \forall t \in [0,T].
\end{align*}
By definition of $Z^{\epsilon,\delta}$ in (\ref{eq_e_1}), we have the desired result. This completes the proof.	
\end{proof}

\subsection{Crucial Facts from Ekeland Variational Principle, together with Passing Limit and Second Variational Equation}\label{Section_4_4}

We recall $Z^{\epsilon,\delta}(\cdot) := \frac{x^{\epsilon,\delta}(\cdot) - x^{\epsilon}(\cdot)}{\delta}$ defined in (\ref{eq_e_1}). Based on the Taylor expansion, 
\begin{align*}
& \frac{1}{\delta} \Bigl ( J(x_0^\epsilon + \delta a, u^{\epsilon,\delta}(\cdot)) - 	J(x_0^\epsilon, u^{\epsilon}(\cdot))  \Bigr ) \\
& = \frac{1}{\delta} \Biggl ( \int_0^T l(s,x^{\epsilon,\delta}(s),u^{\epsilon,\delta}(s)) \dd s + h(x_0^\epsilon + \delta a, x^{\epsilon,\delta}(T))  - \int_0^T l(s,x^{\epsilon}(s),u^{\epsilon}(s)) \dd s - h(x_0^\epsilon, x^{\epsilon}(T))  \Biggr ) \\
& =  \int_0^T l_x^{\epsilon,\delta}(s) Z^{\epsilon,\delta}(s) \dd s + \int_0^T \frac{\mathds{1}_{E_{\delta}}}{\delta} \widehat{l}(s) \dd s  + h_{x_0}^{\epsilon,\delta}(T) a + h_{x}^{\epsilon,\delta}(T)  Z^{\epsilon,\delta}(T),
\end{align*}
where 
\begin{align*}
\widehat{l}(s) & := l(s,x^\epsilon(s),u(s)) - l(s,x^\epsilon(s),u^\epsilon(s)), \\
l_x^{\epsilon,\delta}(s) & := \int_0^1 l_x(s,x^{\epsilon}(s) + r (x^{\epsilon,\delta}(s) - x^{\epsilon}(s)),u^{\epsilon,\delta}(s)) \dd r,
\\
h_{x_0}^{\epsilon,\delta}(T) & := \int_0^1 
 	h_{x_0}(x_0^\epsilon + r \delta a, x^{\epsilon}(T) + r (x^{\epsilon,\delta}(T)-x^{\epsilon}(T)))   \dd r \\
h_{x}^{\epsilon,\delta}(T) & := \int_0^1 
 h_x(x_0^\epsilon + r \delta a, x^{\epsilon}(T) + r (x^{\epsilon,\delta}(T)-x^{\epsilon}(T))) \dd r.
\end{align*}

Let us define
\begin{align*}
\widehat{Z}^{\epsilon}(T) & = \int_0^T l_x(s,x^{\epsilon} (s),u^{\epsilon} (s)) Z^{\epsilon}(s) \dd s  + \int_0^T \widehat{l}(s) \dd s + 
 	h_{x_0}(x_0^{\epsilon},x^{\epsilon}(T))a + h_x(x_0^{\epsilon},x^{\epsilon}(T)) 	Z^{\epsilon}(T).
\end{align*}
By definition of $J$ in (\ref{eq_2}),
\begin{align*}
& \frac{1}{\delta} \Bigl ( J(x_0^\epsilon + \delta a, u^{\epsilon,\delta}(\cdot)) - 	J(x_0^\epsilon, u^{\epsilon}(\cdot))  \Bigr ) - 	\widehat{Z}^{\epsilon}(T) \\
& = \int_0^T l_x^{\epsilon,\delta}(s) \Bigl [ Z^{\epsilon,\delta}(s) - Z^{\epsilon}(s) \Bigr ] \dd s + \int_0^T \Bigl [ l_x^{\epsilon,\delta}(s)  - l_x(s,x^{\epsilon} (s),u^{\epsilon} (s)) \Bigr ] Z^{\epsilon}(s) \dd s \\
&~~~ + \int_0^T \Bigl ( \frac{\mathds{1}_{E_{\delta}}}{\delta} - 1 \Bigr ) \widehat{l}(s) \dd s + \Bigl [ h_{x_0}^{\epsilon,\delta}(T) - h_{x_0}(x_0^\epsilon, x^\epsilon(T)) \Bigr ] a \\
&~~~ + h_x^{\epsilon,\delta}(T)\Bigl [ Z^{\epsilon,\delta}(T) - Z^{\epsilon}(T) \Bigr ] + \Bigl [ h_x^{\epsilon,\delta}(T) - h_x(x^{\epsilon}(T))  \Bigr ] Z^{\epsilon}(T).
\end{align*}
Notice that $\lim_{\delta \downarrow 0} |Z^{\epsilon,\delta}(t)	- Z^{\epsilon}(t)| = 0$ for all $t \in [0,T]$ by Lemma \ref{Lemma_4_4_2342341234234}. Moreover, with $\phi(t,s) = \widehat{l}(s)$ in Lemma \ref{Lemma_D_1} of Appendix \ref{Appendix_D}, for any $\delta \in (0,1)$, there exists an $E_{\delta} \in \mathcal{E}_{\delta}$ such that 
\begin{align*}	
\Biggl | \int_0^t \Bigl ( \frac{1}{\delta} \mathds{1}_{E_{\delta}}(s) - 1 \Bigr ) \widehat{l}(s) \dd s \Biggr | & \leq \delta,~ \forall t \in [0,T].
\end{align*}
Hence, by using a similar technique of Lemma \ref{Lemma_4_4_2342341234234}, we can show that
\begin{align}
\label{eq_4_15}
\lim_{\delta \downarrow 0} \Biggl | 	\frac{1}{\delta} \Bigl ( J(x_0^\epsilon + \delta a, u^{\epsilon,\delta}) - 	J(x_0^\epsilon, u^{\epsilon})  \Bigr ) - 	\widehat{Z}^{\epsilon}(T) \Biggr | = 0,
\end{align}
which is equivalent to 
\begin{align}
\label{eq_4_16}
\Bigl | J(x_0^\epsilon + \delta a, u^{\epsilon,\delta}(\cdot)) - 	J(x_0^\epsilon, u^{\epsilon}(\cdot))  - 	\delta \widehat{Z}^{\epsilon}(T) \Bigr |	= o(\delta).
\end{align}

Now, from (\ref{eq_4_13_1_1_1_2}), 
\begin{align}
\label{eq_4_17_1_1_1_1}
-\sqrt{\epsilon}(|a| + T) & \leq \frac{1}{\delta} \Bigl ( J_{\epsilon}(x_0^\epsilon + \delta a, u^{\epsilon,\delta}(\cdot)) - 	J_{\epsilon}(x_0^\epsilon, u^{\epsilon}(\cdot))  \Bigr ) \\
& = \frac{1}{J_{\epsilon}(x_0^\epsilon + \delta a, u^{\epsilon,\delta}(\cdot)) + 	J_{\epsilon}(x_0^\epsilon, u^{\epsilon}(\cdot))} \nonumber \\
&~~~ \times \frac{1}{\delta} \Biggl ( \Bigl ( \bigl [ J(x_0^\epsilon + \delta a, u^{\epsilon,\delta}(\cdot)) - J(\overline{x}_0, \overline{u}(\cdot)) + \epsilon \bigr ]^+ \Bigr  )^2  - \Bigl ( \bigl [ J(x_0^\epsilon, u^{\epsilon}(\cdot)) - J(\overline{x}_0, \overline{u}(\cdot)) + \epsilon \bigr ]^+ \Bigr  )^2 \nonumber \\
&~~~~~~~ + d_F \Bigl ( \begin{bmatrix}
x_0^\epsilon + \delta a \\
x^{\epsilon,\delta}(T)	
\end{bmatrix} \Bigr )^2 - d_F \Bigl ( \begin{bmatrix}
x_0^\epsilon \\
x^{\epsilon}(T)	
\end{bmatrix} \Bigr )^2  + d_S \Bigl ( \gamma(x^{\epsilon,\delta}(\cdot)) \Bigr )^2 - d_S \Bigl ( \gamma(x^{\epsilon}(\cdot)) \Bigr )^2 \Biggr ). \nonumber
\end{align}

By continuity of $J_{\epsilon}$ on $(\mathbb{R}^n \times \mathcal{U}^p[0,T],\widehat{d})$ and (\ref{eq_4_16}), it follows that  $\lim_{\delta \downarrow 0}  J_{\epsilon}(x_0^\epsilon + \delta a, u^{\epsilon,\delta}(\cdot)) = J_{\epsilon}(x_0^\epsilon, u^{\epsilon}(\cdot))$, which leads to
\begin{align*}
\lim_{\delta \downarrow 0} \Bigl \{ J_{\epsilon}(x_0^\epsilon + \delta a, u^{\epsilon,\delta}(\cdot)) + 	J_{\epsilon}(x_0^\epsilon, u^{\epsilon}(\cdot))	\Bigr \} = 2 J_{\epsilon}(x_0^\epsilon, u^{\epsilon}(\cdot)).
\end{align*}
In view of (\ref{eq_4_15}) and Lemma \ref{Lemma_4_4_2342341234234},
\begin{align*}
& \frac{1}{\delta} \Bigl ( \bigl [ J(x_0^\epsilon + \delta a, u^{\epsilon,\delta}(\cdot)) - J(\overline{x}_0, \overline{u}(\cdot)) + \epsilon \bigr ]^+ \Bigr  )^2  - \Bigl ( \bigl [ J(x_0^\epsilon, u^{\epsilon}(\cdot)) - J(\overline{x}_0, \overline{u}(\cdot)) + \epsilon \bigr ]^+ \Bigr  )^2 \\
& = \Biggl (  \bigl [ J(x_0^\epsilon + \delta a, u^{\epsilon,\delta}(\cdot)) - J(\overline{x}_0, \overline{u}(\cdot)) + \epsilon \bigr ]^+   +  \bigl [ J(x_0^\epsilon, u^{\epsilon}(\cdot)) - J(\overline{x}_0, \overline{u}(\cdot)) + \epsilon \bigr ]^+  \Biggr ) \\
&~~~~~~~ \times \frac{1}{\delta}  \Biggl ( \bigl [ J(x_0^\epsilon + \delta a, u^{\epsilon,\delta}(\cdot)) - J(\overline{x}_0, \overline{u}(\cdot)) + \epsilon \bigr ]^+   -  \bigl [ J(x_0^\epsilon, u^{\epsilon}(\cdot)) - J(\overline{x}_0, \overline{u}(\cdot)) + \epsilon \bigr ]^+  \Biggr )
\\
& \rightarrow  2 \bigl [ J(x_0^\epsilon, u^{\epsilon}(\cdot)) - J(\overline{x}_0, \overline{u}(\cdot)) + \epsilon \bigr ]^+ \widehat{Z}^{\epsilon}(T),~ \textrm{as $\delta \downarrow 0$.}
\end{align*}
Let us define (since $J_{\epsilon}(x_0^\epsilon,u^\epsilon(\cdot)) > 0$ by (\ref{eq_4_8}))
\begin{align}
\label{eq_4_17}
\lambda^{\epsilon} := 	\frac{\bigl [ J(x_0^\epsilon, u^{\epsilon}(\cdot)) - J(\overline{x}_0, \overline{u}(\cdot)) + \epsilon \bigr ]^+}{J_{\epsilon}(x_0^\epsilon, u^{\epsilon}(\cdot))} \geq 0.
\end{align}

By Lemmas \ref{Lemma_4_1} and \ref{Lemma_4_4_2342341234234}, and the definition of Fr\'echet differentiability, as $\delta \downarrow 0$,
\begin{align*}	
& \frac{1}{\delta} \Biggl ( d_F \Bigl ( \begin{bmatrix}
	x_0^\epsilon + \delta a \\
	x^{\epsilon,\delta}(T)
\end{bmatrix} \Bigr )^2 - d_F \Bigl ( \begin{bmatrix}
	x_0^\epsilon \\
	x^{\epsilon}(T)
\end{bmatrix} \Bigr )^2 
  \Biggr )  \rightarrow 2 \Biggl \langle \begin{bmatrix}
	x_0^\epsilon \\
	x^{\epsilon}(T)
\end{bmatrix} - P_F \Bigl (\begin{bmatrix}
	x_0^\epsilon \\
	x^{\epsilon}(T)
\end{bmatrix}  \Bigr ), \begin{bmatrix}
a \\
Z^{\epsilon}(T)
 \end{bmatrix} \Biggr \rangle_{\mathbb{R}^{2n} \times \mathbb{R}^{2n}},
\end{align*}
where $P_F: \mathbb{R}^{2n} \rightarrow F \subset \mathbb{R}^{2n}$ is the projection operator defined in Section \ref{Section_4_1}. Notice that by the statement in Section \ref{Section_4_1}, 
\begin{align*}
d_F	\Bigl (\begin{bmatrix}
	x_0^\epsilon \\
	x^{\epsilon}(T)
\end{bmatrix} \Bigr ) = \Biggl | \begin{bmatrix}
	x_0^\epsilon \\
	x^{\epsilon}(T)
\end{bmatrix} - P_F \Bigl (\begin{bmatrix}
	x_0^\epsilon \\
	x^{\epsilon}(T)
\end{bmatrix}  \Bigr ) \Biggr |_{\mathbb{R}^{2n}},
\end{align*}
and by (\ref{eq_4_1}), 
\begin{align*}	
\begin{bmatrix}
	x_0^\epsilon \\
	x^{\epsilon}(T)
\end{bmatrix} - P_F \Bigl (\begin{bmatrix}
	x_0^\epsilon \\
	x^{\epsilon}(T)
\end{bmatrix} \Bigr ) \in N_F \Bigl ( P_F \Bigl (\begin{bmatrix}
	x_0^\epsilon \\
	x^{\epsilon}(T)
\end{bmatrix} \Bigr ) \Bigr ).
\end{align*}
We define (note that $J_{\epsilon}(x_0^\epsilon,u^\epsilon (\cdot)) > 0$ by (\ref{eq_4_8}) and $\xi_1^{\epsilon},\xi_2 ^{\epsilon} \in \mathbb{R}^n$)
\begin{align}
\label{eq_4_18}
\xi^{\epsilon} := \begin{bmatrix}
 \xi^{\epsilon}_1 \\
 \xi^{\epsilon}_2	
 \end{bmatrix}
 := \frac{\begin{bmatrix}
	x_0^\epsilon \\
	x^{\epsilon}(T)
\end{bmatrix} - P_F \Bigl (\begin{bmatrix}
	x_0^\epsilon \\
	x^{\epsilon}(T)
\end{bmatrix} \Bigr ) }{J_{\epsilon}(x_0^\epsilon,u^\epsilon (\cdot) )}	\in N_F \Bigl ( P_F \Bigl (\begin{bmatrix}
	x_0^\epsilon \\
	x^{\epsilon}(T)
\end{bmatrix} \Bigr ) \Bigr ).
\end{align}

By Lemma \ref{Lemma_2_1} (see also Lemmas \ref{Lemma_B_1} and \ref{Lemma_B_2} in Appendix \ref{Appendix_B}), it holds that $Z^{\epsilon}(\cdot) \in C([0,T];\mathbb{R}^n)$. Then using Lemmas \ref{Lemma_4_3_1_2_2_1} and \ref{Lemma_4_4_2342341234234}, as $\delta \downarrow 0$, we get
\begin{align*}
& \frac{1}{\delta} \Biggl (  d_S \Bigl (\gamma(x^{\epsilon,\delta}(\cdot)) \Bigr )^2 - d_S \Bigl (\gamma(x^{\epsilon}(\cdot)) \Bigr )^2 \Biggr )  \\
&\rightarrow \begin{cases}
2 \Biggl \langle d_S \Bigl ( \gamma(x^{\epsilon}(\cdot)) \Bigr ) D d_S \Bigl ( ( \gamma(x^{\epsilon}(\cdot)) \Bigr ),   G_x(\cdot,x^{\epsilon}(\cdot)) Z^{\epsilon}(\cdot) \Biggr \rangle_{C_m^* \times C_m}, & 	 \gamma(x^{\epsilon}(\cdot)) \notin S, \\
0 \in \mathbb{R}, & \gamma(x^{\epsilon}(\cdot)) \in S .
 \end{cases}
\end{align*}
We define (since $J_{\epsilon}(x_0^\epsilon,u^\epsilon (\cdot)) > 0$ by (\ref{eq_4_8}))
\begin{align}
\label{eq_4_19}
\mu^{\epsilon} := \begin{dcases}
\frac{d_S \Bigl (\gamma(x^{\epsilon}(\cdot)) \Bigr ) D d_S \Bigl (\gamma(x^{\epsilon}(\cdot)) \Bigr )}{J_{\epsilon}(x_0^\epsilon,u^\epsilon (\cdot) )} \in C([0,T]; \mathbb{R}^m)^*, &  \gamma(x^{\epsilon}(\cdot)) \notin S,  \\
0 \in C([0,T]; \mathbb{R}^m)^*, & \gamma(x^{\epsilon}(\cdot)) \in S,
 \end{dcases} 
\end{align}
and since $D d_S \Bigl ( \gamma(x^{\epsilon}(\cdot)) \Bigr )$ is the subdifferential of $d_S \Bigl ( \gamma(x^{\epsilon}(\cdot)) \Bigr )$ at $\gamma(x^{\epsilon}(\cdot)) \in S$ (see Lemma \ref{Lemma_4_3_1_2_2_1}), by (\ref{eq_4_3}) and (\ref{eq_4_5_6_5_43_3_5_7_3_3}), we have
\begin{align}
\label{eq_4_26_23423425345}
\mu^{\epsilon} \in N_S \Bigl (\gamma(x^{\epsilon}(\cdot)) \Bigr ). 	
\end{align}

In view of Lemma \ref{Lemma_4_3_1_2_2_1} and the definitions of $J_\epsilon$, $d_F$ and $d_S$, this leads to
\begin{align}
\label{eq_4_20_1_1_1_1_2}
& |\lambda^\epsilon|^2 + 	|\xi^\epsilon|^2_{\mathbb{R}^{2n}} + \|\mu^\epsilon\|^2_{C([0,T]; \mathbb{R}^m)^*} = 1.
\end{align}
Hence, as $\delta \downarrow 0$, applying (\ref{eq_4_17})-(\ref{eq_4_26_23423425345}) to (\ref{eq_4_17_1_1_1_1}) yields
\begin{align}	
\label{eq_4_20}
-\sqrt{\epsilon}(|a| + T) & \leq  \lambda^{\epsilon} \widehat{Z}^{\epsilon}(T) + \Bigl \langle \xi_1^{\epsilon}, a \Bigr \rangle + \Bigl \langle \xi_2^{\epsilon}, Z^{\epsilon}(T) \Bigr \rangle + \Bigl \langle \mu^{\epsilon},  G_x(\cdot,x^{\epsilon}(\cdot)) Z^{\epsilon}(\cdot) \Bigr \rangle_{C_m^* \times C_m}.
\end{align}

The following lemma shows the estimate between the first and second variational equations, where the first variational equation is given in Lemma \ref{Lemma_4_4_2342341234234}.

\begin{lemma}\label{Lemma_4_5}
For any $(a,u(\cdot)) \in \mathbb{R}^n \times \mathcal{U}^p[0,T]$, the following results hold:
\begin{align*}
& \textrm{(i)}~ \lim_{\epsilon \downarrow 0} \Bigl \{ |x_0^\epsilon - \overline{x}_0 |_{\mathbb{R}^n} + \overline{d}(u^\epsilon(\cdot),\overline{u}(\cdot)) \Bigr \} = 0, \\
& \textrm{(ii)}~  \sup_{t \in [0,T]} \Bigl | Z^{\epsilon}(t;a,u) - Z(t;a,u) \Bigr | = o(\epsilon),~ \Bigl | \widehat{Z}^{\epsilon}(T;a,u) - \widehat{Z}(T;a,u) \Bigr | = o(\epsilon),
\end{align*}
where $Z(\cdot) := Z(\cdot;a,u)$ is the solution to the second variational equation related to $(\overline{x},\overline{u}(\cdot))$ and $\widehat{Z}(\cdot) := \widehat{Z}(\cdot;a,u)$ is the variational equation of $J$, both of which are given below
\begin{align*}
& Z(t)  = a + \int_0^t \Bigl [ \frac{f_x(t,s,\overline{x}(s),\overline{u}(s))}{(t-s)^{1-\alpha}} Z(s) \dd s 	 + \frac{f(t,s,\overline{x}(s),u(s)) - f(t,s,\overline{x}(s),\overline{u}(s))}{(t-s)^{1-\alpha}}  \Bigr ] \dd s \\
&~~~ + \int_0^t \Bigl [ g_x(t,s,\overline{x}(s),\overline{u}(s)) Z(s) + \bigl ( g(t,s,\overline{x}(s),u(s)) - g(t,s,\overline{x}(s),\overline{u}(s)) \bigr ) \Bigr ] \dd s,~  \textrm{a.e.}~ t \in [0,T], \\
& \widehat{Z}(T)  = \int_0^T l_x(s,\overline{x}(s),\overline{u}(s)) Z(s) \dd s + \int_0^T \Bigl [ l(s,\overline{x}(s),u(s)) - l(s,\overline{x}(s),\overline{u}(s)) \Bigr ] \dd s \\
&~~~  + h_{x_0}(\overline{x}_0,\overline{x}(T))a + h_x(\overline{x}_0,\overline{x}(T)) Z(T),~  \textrm{a.e.}~ t \in [0,T].
\end{align*}
\end{lemma}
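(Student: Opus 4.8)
The plan is to treat the two assertions separately, obtaining (i) at once from the Ekeland estimate and (ii) from a linear Volterra difference equation combined with the generalized Gronwall inequality. Throughout I would keep the singular and nonsingular kernels side by side, since the driving remainder couples them.

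For part (i), recall from the Ekeland variational principle that the pair $(x_0^\epsilon,u^\epsilon(\cdot))$ satisfies $\widehat{d}((x_0^\epsilon,u^\epsilon(\cdot)),(\overline{x}_0,\overline{u}(\cdot))) \leq \sqrt{\epsilon}$ in (\ref{eq_4_9_1_1_1}). By the definition of the Ekeland metric in (\ref{eq_4_6})--(\ref{eq_4_7}) this is exactly $|x_0^\epsilon-\overline{x}_0|_{\mathbb{R}^n}+\overline{d}(u^\epsilon(\cdot),\overline{u}(\cdot))\leq\sqrt{\epsilon}$, so letting $\epsilon\downarrow 0$ gives (i). As an immediate corollary, $|x_0^\epsilon-\overline{x}_0|\to 0$ and $|\{t:u^\epsilon(t)\neq\overline{u}(t)\}|\leq\sqrt{\epsilon}\to 0$; feeding these into Lemma~\ref{Lemma_2_1}(ii), together with the continuity and boundedness of $f,g$ in $(x,u)$ (Assumption~\ref{Assumption_2_1}) and the singular-kernel integrability estimates of Appendix~\ref{Appendix_A}, I would first record the auxiliary fact that $\|x^\epsilon(\cdot)-\overline{x}(\cdot)\|_\infty=o(\epsilon)$ as $\epsilon\downarrow 0$, which underlies all the estimates in (ii).

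For part (ii), I would subtract the equation for $Z$ (coefficients frozen at $(\overline{x},\overline{u})$) from that for $Z^\epsilon$ (coefficients at $(x^\epsilon,u^\epsilon)$), grouping the linear part so the homogeneous kernels are those of the limiting equation:
\[
Z^\epsilon(t)-Z(t)=\int_0^t\frac{f_x(t,s,\overline{x}(s),\overline{u}(s))}{(t-s)^{1-\alpha}}\,[Z^\epsilon(s)-Z(s)]\,\dd s+\int_0^t g_x(t,s,\overline{x}(s),\overline{u}(s))\,[Z^\epsilon(s)-Z(s)]\,\dd s+R^\epsilon(t),
\]
where the remainder $R^\epsilon$ collects (a) the coefficient-mismatch terms $\int_0^t(t-s)^{\alpha-1}[f_x(t,s,x^\epsilon(s),u^\epsilon(s))-f_x(t,s,\overline{x}(s),\overline{u}(s))]Z^\epsilon(s)\,\dd s$ together with their nonsingular $g_x$ analogue, and (b) the forcing-mismatch terms $\int_0^t(t-s)^{\alpha-1}[\widehat{f}^\epsilon(t,s)-\widehat{f}(t,s)]\,\dd s$ and its $g$ analogue, where $\widehat{f}^\epsilon$ and $\widehat{f}$ denote the spike increments of $f$ evaluated along $x^\epsilon$ and along $\overline{x}$, respectively. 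I would then show $\sup_t|R^\epsilon(t)|=o(\epsilon)$: in (a), $f_x,g_x$ are bounded and uniformly continuous in $(x,u)$ on the relevant compact range (Assumption~\ref{Assumption_2_1}(iv)), so the $x$-part is dominated by $\omega(\|x^\epsilon-\overline{x}\|_\infty)$ against the $L^p$ bound on $Z^\epsilon$ from (\ref{eq_e_3}), while the $u$-part is supported on $\{u^\epsilon\neq\overline{u}\}$ of measure $\leq\sqrt{\epsilon}$ and is controlled by a power of this measure through Lemmas~\ref{Lemma_A_3} and \ref{Lemma_A_4}; the forcing terms in (b) are handled identically after splitting $\widehat{f}^\epsilon-\widehat{f}$ into an $x$-difference off $\{u^\epsilon\neq\overline{u}\}$ and a bounded term on it, using the dominated convergence theorem and Lemmas~\ref{Lemma_D_1}--\ref{Lemma_D_2}. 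Applying the generalized Gronwall inequality (Lemma~\ref{Lemma_A_5}) to the displayed equation then yields $\sup_t|Z^\epsilon(t)-Z(t)|\leq C\sup_t|R^\epsilon(t)|=o(\epsilon)$. Finally, the estimate for $\widehat{Z}^\epsilon(T)-\widehat{Z}(T)$ follows by inserting the just-proved convergence $Z^\epsilon\to Z$ into the two defining expressions and repeating the same coefficient- and forcing-mismatch bounds for $l_x$, $h_{x_0}$, $h_x$, and $\widehat{l}$.

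The main obstacle is controlling $R^\epsilon$ to the required order on the singular part: unlike the nonsingular $g$-contributions, the mismatches in (a)--(b) are weighted by $(t-s)^{\alpha-1}$, which is only borderline $L^{1/(1-\alpha)}$-integrable, so their smallness cannot be read off pointwise and must be extracted through the $L^p$--$L^q$ singular-integral regularity lemmas of Appendix~\ref{Appendix_A}, applied both to the small-measure set $\{u^\epsilon\neq\overline{u}\}$ and to $\omega(\|x^\epsilon-\overline{x}\|_\infty)$. Managing the singular and nonsingular kernels simultaneously inside the Gronwall step, so that the remainder estimate transfers cleanly to the solution difference, is the delicate point of the argument.
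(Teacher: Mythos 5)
Your proposal takes essentially the same route as the paper, which proves this lemma only through Remark \ref{Remark_4_8_4534535}: part (i) is read off from the Ekeland estimate (\ref{eq_4_9_1_1_1}) together with the definition of the metric in (\ref{eq_4_6})--(\ref{eq_4_7}), and part (ii) is obtained by repeating the argument of Lemma \ref{Lemma_4_4_2342341234234} --- a linear Volterra equation for the difference with kernels frozen at $(\overline{x},\overline{u})$, mismatch terms killed by dominated convergence, H\"older estimates on the small set $\{u^\epsilon \neq \overline{u}\}$ and the regularity lemmas of Appendix \ref{Appendix_A}, followed by the generalized Gronwall inequality (Lemma \ref{Lemma_A_5}) --- which is exactly your decomposition into coefficient- and forcing-mismatch remainders. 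Two small corrections: your auxiliary claim $\|x^\epsilon(\cdot) - \overline{x}(\cdot)\|_\infty = o(\epsilon)$ cannot hold, since the Ekeland perturbation is only of size $\sqrt{\epsilon}$, so this difference is at best a power of $\sqrt{\epsilon}$; what your mechanism (and the paper's) actually delivers is convergence to zero, i.e.\ $o(1)$ as $\epsilon \downarrow 0$, which is how the lemma's ``$o(\epsilon)$'' must be read and is all that the limit passage in Section \ref{Section_4_4} uses. Also, Lemmas \ref{Lemma_D_1}--\ref{Lemma_D_2} are not needed in part (ii): both $Z^\epsilon$ and $Z$ carry the same fixed direction $u(\cdot)$, so no spike set $E_\delta$ enters their difference; those lemmas belong to the proof of Lemma \ref{Lemma_4_4_2342341234234}, not here.
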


\begin{remark}\label{Remark_4_8_4534535}
Note that (i) of Lemma \ref{Lemma_4_5} follows from the definition of the Ekeland metric in (\ref{eq_4_6}) (see also (\ref{eq_4_9_1_1_1})). The proof for (ii) of Lemma \ref{Lemma_4_5} is similar to that for Lemma \ref{Lemma_4_4_2342341234234}.	
\end{remark}

We now consider the limit of $\epsilon \downarrow 0$.  Instead of taking the limit with respect to $\epsilon \downarrow 0$, let $\{\epsilon_k\}$ be the sequence of  $\epsilon$ such that $\epsilon_k \geq 0$ and $\epsilon_k \downarrow 0$ as $k \rightarrow \infty$. We replace $\epsilon$ by $\epsilon_k$. Then by (\ref{eq_4_20_1_1_1_1_2}),  the sequences $(\{\lambda^{\epsilon_k} \}, \{\xi^{\epsilon_k} \},\{\mu^{\epsilon_k}\}) $ are bounded for $k \geq 0$. Note also from (\ref{eq_4_20_1_1_1_1_2}) that the ball generated by $\|\mu^{\epsilon_k}\|^2_{C([0,T];\mathbb{R}^m)^*} \leq 1$ is a closed unit ball in $C([0,T];\mathbb{R}^m)^*$, which is weak--$*$ compact by the Banach-Alaoglu theorem \cite[page 130]{Conway_2000_book}. Then by the standard compactness argument, we may extract a subsequence of $\{\epsilon_k\}$, still denoted by $\{\epsilon_k\}$, such that 
\begin{align}
\label{eq_4_28_34_34234343234}
(\{\lambda^{\epsilon_k} \}, \{\xi^{\epsilon_k} \},\{\mu^{\epsilon_k}\}) \rightarrow (\lambda^0, \xi^0,\mu^0) = :(\lambda, \xi,\mu),~ \textrm{as $k \rightarrow \infty$,}
\end{align}
where $\{\mu^{\epsilon_k}\} \rightarrow \mu$ (as $k \rightarrow \infty$) is understood in the weak--$*$ sense \cite{Conway_2000_book}. 

We claim that from (\ref{eq_4_17})-(\ref{eq_4_26_23423425345}), the tuple $(\lambda,\xi,\mu)$ holds
\begin{subequations}
\label{eq_4_22}
\begin{align}
\label{eq_4_22_234234234234}
	\lambda &\geq 0, \\
\label{eq_4_22_234234234234_wdsdf}
	\xi &\in N_F \Bigl (  P_F \Bigl (\begin{bmatrix}
	\overline{x}_0, \\
		\overline{x}(T)
\end{bmatrix} \Bigr ) \Bigr ), \\
\label{eq_4_22_234234234234_wdsdfdsfsdf}
\mu &\in N_S \Bigl ( \gamma(\overline{x}(\cdot)) \Bigr ).
\end{align}
\end{subequations}
Indeed, (\ref{eq_4_22_234234234234}) holds due to (\ref{eq_4_17}). Furthermore, (\ref{eq_4_22_234234234234_wdsdf}) follows from (\ref{eq_4_18}) and the property of limiting normal cones \cite[page 43]{Vinter_book}. To prove (\ref{eq_4_22_234234234234_wdsdfdsfsdf}), we note that (\ref{eq_4_26_23423425345}) and (\ref{eq_4_3}) mean that $ \langle \mu^{\epsilon_k}, z - \gamma(x^{\epsilon_k}(\cdot)) \rangle_{C_m^* \times C_m} \leq 0$ for any $z \in S$. Then (\ref{eq_4_22_234234234234_wdsdfdsfsdf}) holds, since by (\ref{eq_4_3}), (\ref{eq_4_20_1_1_1_1_2}) and (\ref{eq_4_28_34_34234343234}), together with the boundedness of $\{\mu^{\epsilon_k}\}$, Lemma \ref{Lemma_4_5}, and the weak--$*$ convergence property of $\{\mu^{\epsilon_k}\}$ to $\mu$, it holds that
\begin{align*}
0 \geq \Bigl \langle \mu^{\epsilon_k}, z - \gamma(x^{\epsilon_k}(\cdot) ) \Bigr \rangle_{C_m^* \times C_m}	& \geq  \Bigl \langle \mu, z - \gamma(\overline{x}(\cdot) ) \Bigr \rangle_{C_m^* \times C_m} - \Bigl \|\gamma(x^{\epsilon_k}(\cdot)) - \gamma(\overline{x}(\cdot)) \Bigr \|_{\infty} \\
&~~~ + \Bigl \langle \mu^{\epsilon_k}, z - \gamma(\overline{x}(\cdot)) \Bigr \rangle_{C_m^* \times C_m} - \Bigl \langle \mu, z - \gamma(\overline{x}(\cdot)) \Bigr \rangle_{C_m^* \times C_m} \\
& \rightarrow ~\Bigl \langle \mu, z - \gamma(\overline{x}(\cdot) ) \Bigr \rangle_{C_m^* \times C_m},~ \textrm{as $k \rightarrow \infty$.}
\end{align*}

By (\ref{eq_4_28_34_34234343234}) and (\ref{eq_4_20_1_1_1_1_2}), together with Lemma \ref{Lemma_4_5}, it follows that
\begin{align*}
\lambda^{\epsilon_k} \widehat{Z}^{\epsilon_k}(T)  & \leq \lambda \widehat{Z}(T) +  |\widehat{Z}^{\epsilon_k}(T) - \widehat{Z}(T)| + |\lambda^{\epsilon_k} - \lambda | \widehat{Z}(T)~\rightarrow ~ \lambda \widehat{Z}(T),~ \textrm{as $k \rightarrow \infty$,}	 \\
\Bigl \langle \xi_1^{\epsilon_k}, a \Bigr \rangle  &= 	  \Bigl \langle \xi_1, a \Bigr \rangle + \Bigl \langle \xi_1^{\epsilon_k}, a \Bigr \rangle - \Bigl \langle \xi_1, a \Bigr \rangle~ \rightarrow~ \Bigl \langle \xi_1, a \Bigr \rangle,~ \textrm{as $k \rightarrow \infty$,} \\
\Bigl \langle \xi_2^{\epsilon_k}, Z^{\epsilon_k}(T) \Bigr \rangle  & \leq \Bigl \langle \xi_2, Z(T) \Bigr \rangle + |Z^{\epsilon_k}(T) - Z(T)| + |\xi_2^{\epsilon_k} - \xi_2||Z(T)|~\rightarrow \Bigl \langle \xi_2, Z(T) \Bigr \rangle,~ \textrm{as $k \rightarrow \infty$,}
\end{align*}
and similarly, together with the definition of the weak--$*$ convergence,
\begin{align*}
\Bigl \langle \mu^{\epsilon_k},  G_x(\cdot,x^{\epsilon_k}(\cdot)) Z^{\epsilon_k}(\cdot) \Bigr \rangle_{C_m^* \times C_m} & \leq \Bigl \langle \mu,  G_x(\cdot,x(\cdot)) Z(\cdot) \Bigr \rangle_{C_m^* \times C_m} + \| Z^{\epsilon_k}(\cdot) - Z(\cdot)\|_{\infty} \\
&~~~ + \Bigl \langle \mu^{\epsilon_k},  G_x(\cdot,x(\cdot)) Z(\cdot) \Bigr \rangle_{C_m^* \times C_m} - \Bigl \langle \mu,  G_x(\cdot,x(\cdot)) Z(\cdot) \Bigr \rangle_{C_m^* \times C_m}	\\
& \rightarrow ~ \Bigl \langle \mu,  G_x(\cdot,x(\cdot)) Z(\cdot) \Bigr \rangle_{C_m^* \times C_m},~ \textrm{as $k \rightarrow \infty$.}
\end{align*}
Therefore, as $k \rightarrow \infty$, (\ref{eq_4_20}) becomes for any $(a,u) \in \mathbb{R}^n \times \mathcal{U}^p[0,T]$,
\begin{align}	
\label{eq_4_23}
0 & \leq  \lambda \widehat{Z}(T) + \Bigl \langle \xi_1, a \Bigr \rangle + \Bigl \langle \xi_2, Z(T) \Bigr \rangle + \Bigl \langle \mu,  G_x(\cdot, \overline{x}(\cdot)) Z(\cdot;a,u) \Bigr \rangle_{C_m^* \times C_m}.
\end{align}
Note that (\ref{eq_4_23}) is the crucial inequality obtained from the Ekeland variational principle as well as the estimates of the variational equations in Lemmas \ref{Lemma_4_4_2342341234234} and \ref{Lemma_4_5}. 

\subsection{Proof of Theorem \ref{Theorem_3_1}: Complementary Slackness Condition}\label{Section_4_5}

We prove the complementary slackness condition in Theorem \ref{Theorem_3_1}. Let $\mu = (\mu_1,\ldots, \mu_m) \in C([0,T];\mathbb{R}^m)^*$, where $\mu_i \in C([0,T];\mathbb{R})^*$, $i=1,\ldots,m$. Then it holds that
\begin{align}	
\label{Eq_4_31_3434534234234123}
\Bigl \langle \mu, z \Bigr \rangle_{C_m^* \times C_m} = \sum_{i=1}^m \Bigl \langle \mu_i, z_i \Bigr \rangle_{C_1^* \times C_1},~ \forall z = (z_1,\ldots,z_m) \in C([0,T];\mathbb{R}^m)
\end{align}
where $\langle \cdot,\cdot \rangle_{C_1^* \times C_1} := \langle \cdot,\cdot \rangle_{C([0,T];\mathbb{R})^* \times C([0,T];\mathbb{R})}$ denotes the duality paring between $C([0,T];\mathbb{R})$ and $C([0,T];\mathbb{R})^*$. 

Recall $\gamma(\overline{x}(\cdot)) = (\gamma_1(\overline{x}(\cdot)),\ldots, \gamma_m(\overline{x}(\cdot))) = G(\cdot,\overline{x}(\cdot)) = \begin{bmatrix}
 	G^1(\cdot,\overline{x}(\cdot)) & \cdots & G^m(\cdot,\overline{x}(\cdot))
 \end{bmatrix} \in S$ and $\mu \in N_S \Bigl (\gamma(\overline{x}(\cdot;\overline{x}_0,\overline{u})) \Bigr )$ by (\ref{eq_4_22_234234234234_wdsdfdsfsdf}). Based on (\ref{Eq_4_31_3434534234234123}) and (\ref{eq_4_3}), this implies that for any $z \in S$,
\begin{align}
\label{eq_4_24}	
\Bigl \langle \mu, z - \gamma(\overline{x}(\cdot;\overline{x}_0,\overline{u}))  \Bigr \rangle_{C_m^* \times C_m} = \sum_{i=1}^m \Bigl \langle \mu_i, z_i - \gamma_i (\overline{x}(\cdot;\overline{x}_0,\overline{u})) \Bigr \rangle_{C_1^* \times C_1} \leq 0.
\end{align}
Taking $z$ in (\ref{eq_4_24}) as follows:
\begin{align*}
z &= \begin{bmatrix}
 G^1(\cdot,\overline{x}(\cdot)) & \cdots & G^{i-1}(\cdot,\overline{x}(\cdot)) & 2G^{i}(\cdot,\overline{x}(\cdot)) & G^{i+1}(\cdot,\overline{x}(\cdot)) & \cdots & G^m(\cdot,\overline{x}(\cdot))	
 \end{bmatrix} \in S, \\
 z^{(-i)} &= 
 \begin{bmatrix}
 	G^1(\cdot,\overline{x}(\cdot)) & \cdots & G^{i-1}(\cdot,\overline{x}(\cdot)) & 0_{\in C([0,T];\mathbb{R})} & G^{i+1}(\cdot,\overline{x}(\cdot)) & \cdots &  G^m(\cdot,\overline{x}(\cdot)) 
 \end{bmatrix} \in S.
\end{align*}
Then (\ref{eq_4_24}) is equivalent to
\begin{align}
\label{eq_4_25}
\Bigl \langle \mu_i, G^i(\cdot,\overline{x}(\cdot;\overline{x}_0;\overline{u})) \Bigr \rangle_{C_1^* \times C_1} & = 0,~ \forall i=1,\ldots,m ,
\\
\label{eq_4_26}
\Bigl \langle \mu_i, z_i  \Bigr \rangle_{C_1^* \times C_1} & \geq 0,~ \forall z_i \in C([0,T];\mathbb{R}_{+}),~ i=1,\ldots,m.
\end{align}

For (\ref{eq_4_25}) and (\ref{eq_4_26}), by the Riesz representation theorem (see \cite[page 75 and page 382]{Conway_2000_book} and \cite[Theorem 14.5]{Limaye_book}), there is a unique $\theta(\cdot) = (\theta_1(\cdot),\ldots,\theta_m(\cdot)) \in \textsc{NBV}([0,T];\mathbb{R}^m)$ with $\theta_i(\cdot) \in \textsc{NBV}([0,T];\mathbb{R})$, i.e., $\theta_i$, $i=1,\ldots,m$, being the normalized functions of bounded variation on $[0,T]$, such that every $\theta_i$ is finite, nonnegative, and monotonically nondecreasing on $[0,T]$ with $\theta_i(0) = 0$. Moreover, the Riesz representation theorem leads to the following representation:
\begin{align}	
\Bigl \langle \mu_i, \gamma_i(\overline{x}(\cdot;\overline{x}_0,\overline{u})) \Bigr \rangle_{C_1^* \times C_1} &= \int_0^T G^i(s,\overline{x}(s;\overline{x}_0,\overline{u})) \dd \theta_i(s) = 0,~ \forall i=1,\ldots,m,   \nonumber \\
\label{eq_4_27}
\langle \mu_i, z_i  \Bigr \rangle_{C_1^* \times C_1} &= \int_0^T z_i(s) \dd \theta_i(s) \geq 0,~ \forall z_i \in C([0,T];\mathbb{R}_{+}),~ i=1,\ldots,m.
\end{align}

Notice that (\ref{eq_4_27}) always holds as $\theta_i$ is monotonically nondecreasing on $[0,T]$ with $\theta(0) = 0$ (equivalently, $\dd \theta_i$ is nonnegative) and $z_i \in C([0,T];\mathbb{R}_{+})$. Hence,  (\ref{eq_4_25}) and (\ref{eq_4_26}) are reduced to
\begin{align}	
\label{eq_4_27_4_23_3_4_2_}
& \Bigl \langle \mu_i, \gamma_i(\overline{x}(\cdot;\overline{x}_0;\overline{u})) \Bigr \rangle_{C_1^* \times C_1} = \int_0^T G^i(s,\overline{x}(s;\overline{x}_0,\overline{u})) \dd \theta_i(s) = 0,~ \forall i=1,\ldots,m, \\
& \Leftrightarrow~ 	\textsc{supp}(\dd \theta_i(\cdot)) \subset \{ t \in [0,T]~|~ G^i(t,\overline{x}(t;\overline{x}_0,\overline{u}))= 0\},~ \forall i=1,\ldots,m, \nonumber
\end{align}
where the equivalence follows from the fact that $G^i(t,\overline{x}(t;\overline{x}_0,\overline{u})) \leq 0$, $i=1,\ldots,m$, and $\dd \theta_i$, $i=1,\ldots,m$, are finite nonnegative measures on $([0,T],\mathcal{B}([0,T]))$. The relation in (\ref{eq_4_27_4_23_3_4_2_}) proves the complementary slackness condition in Theorem \ref{Theorem_3_1}.

\subsection{Proof of Theorem \ref{Theorem_3_1}: Nontriviality and Nonnegativity Conditions}\label{Section_4_6}

We prove the nontriviality and nonnegativity conditions in Theorem \ref{Theorem_3_1}. Recall (\ref{eq_4_24}), i.e., for any $z \in S$, 
\begin{align}
\label{eq_4_24_2345234234234223423}	
\Bigl \langle \mu, z - \gamma(\overline{x}(\cdot;\overline{x}_0,\overline{u})) \Bigr \rangle_{C_m^* \times C_m} = \sum_{i=1}^m \Bigl \langle \mu_i, z_i - G^i(\cdot,\overline{x}(\cdot;\overline{x}_0,\overline{u})) \Bigr \rangle_{C_1^* \times C_1} \leq 0.
\end{align}
Then by the Riesz representation theorem (see \cite[page 75 and page 382]{Conway_2000_book} and \cite[Theorem 14.5]{Limaye_book}) and the fact that $\theta_i$, $i=1,\ldots,m$, is finite, nonnegative, and monotonically nondecreasing on $[0,T]$ with $\theta_i(0) = 0$ (see Section \ref{Section_4_5}), it follows that $\|\mu_i \|_{C([0,T];\mathbb{R}^m)^*} = \|\theta_i(\cdot)\|_{\textsc{NBV}([0,T];\mathbb{R})}  = \theta_i(T) \geq 0$ for $i=1,\ldots,m$. In addition, as $\theta_i$ is monotonically nondecreasing, we have $\dd \theta_i(s) \geq 0$ for $s \in [0,T]$, where $\dd \theta_i$ denotes the Lebesgue-Stieltjes measure corresponding to $\theta_i$, $i=1,\ldots,m$. 

By (\ref{eq_4_22_234234234234_wdsdf}) and the fact that $\begin{bmatrix}
\overline{x}_0 \\\overline{x}(T)) \end{bmatrix} \in F$ implies $P_F \Bigl (\begin{bmatrix}
\overline{x}_0 \\\overline{x}(T)) \end{bmatrix} \Bigr ) = \begin{bmatrix}\overline{x}_0 \\\overline{x}(T) \end{bmatrix}$ (see Section \ref{Section_4_1}), we have $\xi = \begin{bmatrix}
 \xi_1 \\
 \xi_2	
 \end{bmatrix} \in N_F \Bigl ( \begin{bmatrix}\overline{x}_0 \\\overline{x}(T) \end{bmatrix} \Bigr )$. 
In addition, from the fact that $S = C([0,T];\mathbb{R}_{-}^m)$ has an nonempty interior, there are $z^\prime \in S$ and $\sigma > 0$ such that $z^\prime + \sigma z \in S$ for all $z \in \overline{B}_{(C([0,T];\mathbb{R}^n),\|\cdot\|_{C([0,T];\mathbb{R}^n)})}(0,1)$ (the closure of the unit ball in $C([0,T];\mathbb{R}^n)$). Then by (\ref{eq_4_24_2345234234234223423}), it follows that
\begin{align*}	
\sigma \Bigl \langle \mu,z \Bigr \rangle_{C_m^* \times C_m} \leq \Bigl \langle \mu,  \gamma(\overline{x}(\cdot)) - z^\prime	 \Bigr \rangle_{C_m^* \times C_m},~ \forall z \in \overline{B}_{(C([0,T];\mathbb{R}^n),\|\cdot\|_{C([0,T];\mathbb{R}^n)})}(0,1).
\end{align*}
By (\ref{eq_4_20_1_1_1_1_2}) 
and the definition of the norm of the dual space (the norm of linear functionals on $C([0,T];\mathbb{R}^m)$ (see Section \ref{Section_4_1})), we get
\begin{align*}
\sigma \|	\mu \|_{C([0,T];\mathbb{R}^m)^*}  = \sigma \sqrt{1-|\lambda|^2 - 	|\xi|^2_{\mathbb{R}^{2n}}} \leq \Bigl \langle \mu,  \gamma(x^{\epsilon}(\cdot)) - z^\prime	 \Bigr \rangle_{C_m^* \times C_m},~ z^\prime \in S.
\end{align*}
Notice that $\sigma > 0$. When $\mu = 0 \in C([0,T];\mathbb{R}^m)^*$ and $\xi = 0$, we must have $\lambda = 1$. When $\lambda = 0$ and $\mu = 0 \in C([0,T];\mathbb{R}^m)^*$, we must have $|\xi|_{\mathbb{R}^{2n}} = 1$. When $\lambda = 0$ and $\xi = 0$, it holds that $\mu \neq 0 \in C([0,T];\mathbb{R}^m)^*$. This implies that the tuple $(\lambda,\xi,\theta_1(\cdot),\ldots,\theta_m(\cdot))$ cannot be trivial, i.e., $(\lambda,\xi,\theta_1(\cdot),\ldots,\theta_m(\cdot)) \neq 0$ (they cannot be zero simultaneously).

In summary, based on the above discussion, it follows that the following tuple
\begin{align*}
\begin{cases}
	\lambda  \geq 0, \\
	\xi \in N_F \Bigl ( \begin{bmatrix}\overline{x}_0 \\\overline{x}(T) \end{bmatrix} \Bigr ), \\
	\|\mu_i \|_{ C([0,T];\mathbb{R}^m)^*}  = \|\theta_i(\cdot)\|_{\textsc{NBV}([0,T];\mathbb{R})} = \theta_i(T) \geq 0,~\forall i=1,\ldots,m
\end{cases}
\end{align*}
cannot be trivial, i.e., it holds that $(\lambda,\xi,\theta_1(\cdot),\ldots,\theta_m(\cdot)) \neq 0$, and
\begin{align*}
	& \begin{cases}
	\lambda \geq 0, \\
	\dd \theta_i(s) \geq 0,~ \forall s \in [0,T],~i=1, \ldots, m.
	\end{cases}
\end{align*}
This shows the nontriviality and nonnegativity conditions in Theorem \ref{Theorem_3_1}.

\subsection{Proof of Theorem \ref{Theorem_3_1}: Adjoint Equation and Duality Analysis}\label{Section_4_7}
Recall the variational inequality in (\ref{eq_4_23}), i.e., for any $(a,u) \in \mathbb{R}^n \times \mathcal{U}^p[0,T]$,
\begin{align}
\label{eq_5_40_23423402302020202}
0 & \leq  \lambda \widehat{Z}(T;a,u) + \Bigl \langle \xi_1, a \Bigr \rangle + \Bigl \langle \xi_2, Z(T;a,u) \Bigr \rangle + \Bigl \langle \mu,  G_x(\cdot, \overline{x}(\cdot)) Z(\cdot;a,u) \Bigr \rangle_{C_m^* \times C_m}.
\end{align}
Similar to (\ref{eq_4_27_4_23_3_4_2_}), by the Riesz representation theorem, it holds that
\begin{align*}	
\Bigl \langle \mu,  G_x(\cdot,\overline{x}(\cdot;\overline{x}_0,\overline{u})) Z(\cdot;a,u) \Bigr \rangle_{C_m^* \times C_m} & = \sum_{i=1}^m \Bigl \langle \mu_i,  G_x^i(\cdot,\overline{x}(\cdot;\overline{x}_0,\overline{u})) Z(\cdot;a,u)  \Bigr \rangle_{C_1^* \times C_1} \\
& = \sum_{i=1}^m \int_0^T G_x^{i} (s,\overline{x}(s)) Z(s;a,u) ) \dd \theta_i(s),
\end{align*}
where as shown in Section \ref{Section_4_5}, we have $\theta(\cdot) = (\theta_1(\cdot),\ldots,\theta_m(\cdot)) \in \textsc{NBV}([0,T];\mathbb{R}^m)$ with $\theta_i (\cdot) \in \textsc{NBV}([0,T];\mathbb{R})$ being finite and monotonically nondecreasing on $[0,T]$. 

Then by using the variational equations in Lemma \ref{Lemma_4_5}, (\ref{eq_5_40_23423402302020202}) becomes
\begin{align}
\label{eq_4_30_4_2323234_2}	
0 & \leq  
\Bigl \langle \xi_1 + \lambda h_{x_0}(\overline{x}_0,\overline{x}(T))^\top, a \Bigr \rangle + \Bigl \langle \xi_2 + \lambda h_x(\overline{x}_0,\overline{x}(T))^\top, a \Bigr \rangle  + \sum_{i=1}^m \int_0^T G_x^{i} (s,\overline{x}(s)) Z(s) \dd \theta_i(s)  \\
&~~~ + \int_0^T \Bigl [ \Bigl ( \lambda h_x(\overline{x}_0,\overline{x}(T)) + \xi_2^\top \Bigr ) \mathds{1}_{[0,T)}(s) \frac{f_x(T,s,\overline{x}(s),\overline{u}(s))}{(T-s)^{1-\alpha}}   \nonumber \\
&~~~~~~~~~~ + \Bigl ( \lambda h_x(\overline{x}_0,\overline{x}(T)) + \xi_2^\top \Bigr )  g_x(T,s,\overline{x}(s),\overline{u}(s))  + \lambda l_x(s,\overline{x}(s),\overline{u}(s)) \Bigr ] Z(s) \dd s \nonumber \\
&~~~ + \int_0^T \Bigl ( \lambda h_x(\overline{x}_0,\overline{x}(T)) + \xi_2^\top \Bigr ) \mathds{1}_{[0,T)}(s) \frac{f(T,s,\overline{x}(s),u(s)) - f(T,s,\overline{x}(s),\overline{u}(s))}{(T-s)^{1-\alpha}} \dd s \nonumber \\
&~~~ + \int_0^T \Bigl ( \lambda h_x(\overline{x}_0,\overline{x}(T)) + \xi_2^\top \Bigr ) \Bigl ( g(T,s,\overline{x}(s),u(s)) - g(T,s,\overline{x}(s),\overline{u}(s))  \Bigr ) \dd s \nonumber \\
&~~~ + \int_0^T \lambda  \Bigl [ l(s,\overline{x}(s),u(s)) - l(s,\overline{x}(s),\overline{u}(s)) \Bigr ] \dd s .\nonumber
\end{align}
Based on Lemma \ref{Lemma_B_5} and Remark \ref{Remark_3_3}, let $p(\cdot) \in L^p([0,T];\mathbb{R}^n)$ be the unique solution to the adjoint equation in Theorem \ref{Theorem_3_1}. Applying it to (\ref{eq_4_30_4_2323234_2}) yields
\begin{align}
\label{eq_4_31_32342323423423423}
0 & \leq 	
\Bigl \langle \xi_1 + \lambda h_{x_0}(\overline{x}_0,\overline{x}(T))^\top, a \Bigr \rangle + \Bigl \langle \xi_2 + \lambda h_x(\overline{x}_0,\overline{x}(T))^\top, a \Bigr \rangle \\
&~~~ + \sum_{i=1}^m \int_0^T G_x^{i} (s,\overline{x}(s)) Z(s;a,u) \dd \theta_i(s)  + \int_0^T \Biggl [ - p(s) - \sum_{i=1}^m G_x^{i} (s,\overline{x}(s))^\top \frac{\dd \theta_i(s)}{\dd s} \nonumber \\
&~~~~~~~~~~ + \int_s^T \frac{f_x(r,s,\overline{x}(s),\overline{u}(s))^\top}{(r-s)^{1-\alpha}} p(r) \dd r + \int_s^T g_x(r,s,\overline{x}(s),\overline{u}(s))^\top p(r) \dd r \Biggr ]^\top Z(s;a,u) \dd s \nonumber \\
&~~~ + \int_0^T \Bigl ( \lambda h_x(\overline{x}_0,\overline{x}(T)) + \xi_2^\top \Bigr ) \mathds{1}_{[0,T)}(s) \frac{f(T,s,\overline{x}(s),u(s)) - f(T,s,\overline{x}(s),\overline{u}(s))}{(T-s)^{1-\alpha}} \dd s \nonumber \\
&~~~ + \int_0^T \Bigl ( \lambda h_x(\overline{x}_0,\overline{x}(T)) + \xi_2^\top \Bigr ) \Bigl [ g(T,s,\overline{x}(s),u(s)) - g(T,s,\overline{x}(s),\overline{u}(s))  \Bigr ] \dd s \nonumber \\
&~~~ + \int_0^T \lambda  \Bigl [ l(s,\overline{x}(s),u(s)) - l(s,\overline{x}(s),\overline{u}(s)) \Bigr ] \dd s  \nonumber \\
& = \Bigl \langle \xi_1 + \lambda h_{x_0}(\overline{x}_0,\overline{x}(T))^\top, a \Bigr \rangle + \Bigl \langle \xi_2 + \lambda h_x(\overline{x}_0,\overline{x}(T))^\top, a \Bigr \rangle \nonumber \\
&~~~ + \int_0^T \Biggl [ - p(s) + \int_s^T \Bigl [ \frac{f_x(r,s,\overline{x}(s),\overline{u}(s))^\top}{(r-s)^{1-\alpha}} + g_x(r,s,\overline{x}(s),\overline{u}(s))^\top \Bigr ] p(r) \dd r \Biggr ]^\top Z(s;a,u) \dd s \nonumber \\
&~~~ + \int_0^T \Bigl ( \lambda h_x(\overline{x}_0,\overline{x}(T)) + \xi_2^\top \Bigr ) \mathds{1}_{[0,T)}(s) \frac{f(T,s,\overline{x}(s),u(s)) - f(T,s,\overline{x}(s),\overline{u}(s))}{(T-s)^{1-\alpha}} \dd s \nonumber \\
&~~~ + \int_0^T \Bigl ( \lambda h_x(\overline{x}_0,\overline{x}(T)) + \xi_2^\top \Bigr ) \Bigl [ g(T,s,\overline{x}(s),u(s)) - g(T,s,\overline{x}(s),\overline{u}(s))  \Bigr ] \dd s \nonumber \\
&~~~ + \int_0^T \lambda  \Bigl [ l(s,\overline{x}(s),u(s)) - l(s,\overline{x}(s),\overline{u}(s)) \Bigr ] \dd s. \nonumber
\end{align}

In (\ref{eq_4_31_32342323423423423}), the standard Fubini's formula and Lemma \ref{Lemma_4_5} lead to
\begin{align*}
& \int_0^T \Biggl [ - p(s) + \int_s^T \Bigl [ \frac{f_x(r,s,\overline{x}(s),\overline{u}(s))^\top}{(r-s)^{1-\alpha}} + g_x(r,s,\overline{x}(s),\overline{u}(s))^\top \Bigr ] p(r) \dd r \Biggr ]^\top Z(s) \dd s \\
& = \int_0^T - p(s)^\top Z(s) \dd s	+ \int_0^T \int_0^s p(s)^\top \Bigl [ \frac{f_x(s,r,\overline{x}(r),\overline{u}(r)}{(s-r)^{1-\alpha}} + g_x(s,r,\overline{x}(r),\overline{u}(r)) \Bigr ] Z(r) \dd r  \dd s \\
& = \int_0^T - p(s)^\top \Biggl [ Z(s) - \int_0^s  \Bigl [ \frac{f_x(s,r,\overline{x}(r),\overline{u}(r)}{(s-r)^{1-\alpha}} + g_x(s,r,\overline{x}(r),\overline{u}(r)) \Bigr ]Z(r) \dd r \Biggr ] \dd s \\
& = \int_0^T - p(s)^\top \Biggl [a + \int_0^s \frac{f(s,r,\overline{x}(r),u(r)) - f(s,r,\overline{x}(r),\overline{u}(r))}{(s-r)^{1-\alpha}} \dd r \\
&~~~~~~~ + \int_0^s \bigl [ g(s,r,\overline{x}(r),u(r)) - g(s,r,\overline{x}(r),\overline{u}(r)) \bigr ]   \dd r \Biggr ] \dd s.
\end{align*}
Moreover, by definition of $N_F$ in (\ref{eq_4_1}), it follows that 
\begin{align*}
\Bigl \langle \xi_1, a \Bigr \rangle + \Bigl  \langle \xi_2, a \Bigr  \rangle & \leq \Bigl \langle \xi_1, \overline{x}_0 - y_1 + a \Bigr \rangle + \Bigl \langle \xi_2, \overline{x}(T;\overline{x}_0,\overline{u}) - y_2 + a \Bigr \rangle,~ \forall y = \begin{bmatrix}
 y_1 \\ 
 y_2
 \end{bmatrix} \in F.
\end{align*}
Hence, (\ref{eq_4_31_32342323423423423}) becomes for any $(a,u) \in \mathbb{R}^n \times \mathcal{U}^p[0,T]$ and $y \in F$, 
\begin{align}	
\label{eq_4_32}
0 & \leq \Bigl \langle \xi_1, \overline{x}_0 - y_1 + a \Bigr \rangle_{\mathbb{R}^n \times \mathbb{R}^n} + \Bigl \langle \xi_2, \overline{x}(T;\overline{x}_0,\overline{u}) - y_2 + a \Bigr \rangle_{\mathbb{R}^n \times \mathbb{R}^n} \\
&~~~ + \lambda h_{x_0}(\overline{x}_0,\overline{x}(T)) a + \lambda  h_x(\overline{x}_0,\overline{x}(T)) a - \Bigl \langle \int_0^T p(s)  \dd s , a \Bigr \rangle \nonumber \\
&~~~ + \int_0^T - p(s)^\top \Biggl [\int_0^s \frac{f(s,r,\overline{x}(r),u(r)) - f(s,r,\overline{x}(r),\overline{u}(r))}{(s-r)^{1-\alpha}} \dd r \nonumber \\
&~~~~~~~ + \int_0^s \bigl [ g(s,r,\overline{x}(r),u(r)) - g(s,r,\overline{x}(r),\overline{u}(r)) \bigr ]   \dd r \Biggr ] \dd s \nonumber \\
&~~~ + \int_0^T \Bigl ( \lambda h_x(\overline{x}_0,\overline{x}(T)) + \xi_2^\top \Bigr ) \mathds{1}_{[0,T)}(s) \frac{f(T,s,\overline{x}(s),u(s)) - f(T,s,\overline{x}(s),\overline{u}(s))}{(T-s)^{1-\alpha}} \dd s \nonumber \\
&~~~ + \int_0^T \Bigl ( \lambda h_x(\overline{x}_0,\overline{x}(T)) + \xi_2^\top \Bigr ) \Bigl [ g(T,s,\overline{x}(s),u(s)) - g(T,s,\overline{x}(s),\overline{u}(s))  \Bigr ] \dd s \nonumber \\
&~~~ + \int_0^T \lambda  \Bigl [ l(s,\overline{x}(s),u(s)) - l(s,\overline{x}(s),\overline{u}(s)) \Bigr ] \dd s. \nonumber
\end{align}
Below, we use (\ref{eq_4_32}) to prove the transversality condition, the nontriviality of the adjoint equation, and the Hamiltonian-like maximum condition in Theorem \ref{Theorem_3_1}.

\subsection{Proof of Theorem \ref{Theorem_3_1}: Transversality Condition and Nontriviality of Adjoint Equation}\label{Section_4_8}

In (\ref{eq_4_32}), when $u=\overline{u}$, we have
\begin{align}
\label{eq_5_456345345345}	
0 & \leq \Bigl \langle \xi_1, \overline{x}_0 - y_1 + a \Bigr \rangle + \Bigl \langle \xi_2, \overline{x}(T;\overline{x}_0,\overline{u}) - y_2 + a \Bigr \rangle \\
&~~~ + \lambda h_{x_0}(\overline{x}_0,\overline{x}(T)) a + \lambda h_x(\overline{x}_0,\overline{x}(T)) a - \Bigl \langle \int_0^T p(s)  \dd s , a \Bigr \rangle,~ \forall y = \begin{bmatrix}
 y_1 \\ 
 y_2
 \end{bmatrix} \in F. \nonumber
\end{align}
When $y_1 = \overline{x}_0$ and $y_2 = \overline{x}(T;\overline{x}_0,\overline{u})$, the above inequality holds for any $a,-a \in \mathbb{R}^n$, which implies
\begin{align}
\label{eq_4_38_1_2_1_2_3_2_1}
\int_0^T p(s) \dd s = \xi_1 + \xi_2 + \lambda h_{x_0}(\overline{x}_0,\overline{x}(T))^\top  +\lambda h_x(\overline{x}_0,\overline{x}(T))^\top.
\end{align}
Under this condition, (\ref{eq_5_456345345345}) becomes
\begin{align*}	
0 \leq \Bigl \langle \xi_1, \overline{x}_0 - y_1 \Bigr \rangle + \Bigl \langle \xi_2, \overline{x}(T;\overline{x}_0,\overline{u}) - y_2 \Bigr \rangle,~ \forall y  \in F.	
\end{align*}
This proves the transversality condition in Theorem \ref{Theorem_3_1}. In addition, as $p (\cdot) \in L^p([0,T];\mathbb{R}^n)$ by Lemma \ref{Lemma_B_5},  (\ref{eq_4_38_1_2_1_2_3_2_1}), together with the nontriviality condition, shows the nontriviality of the adjoint equation in Theorem \ref{Theorem_3_1}.

\subsection{Proof of Theorem \ref{Theorem_3_1}: Hamiltonian-like Maximum Condition}\label{Section_4_9}

We finally prove the Hamiltonian-like maximum condition in Theorem \ref{Theorem_3_1}. When $y_1 = \overline{x}_0$, $y_2 = \overline{x}(T;\overline{x}_0,\overline{u})$ and $a=0$ in (\ref{eq_4_32}), by the standard Fubini's formula, (\ref{eq_4_32}) can be written as
\begin{align}
\label{eq_4_33}
0 & \leq \int_0^T -p(s)^\top \Biggl [\int_0^s \frac{f(s,r,\overline{x}(r),u(r)) - f(s,r,\overline{x}(r),\overline{u}(r))}{(s-r)^{1-\alpha}} \dd r \\
&~~~~~~~ + \int_0^s \bigl [ g(s,r,\overline{x}(r),u(r)) - g(s,r,\overline{x}(r),\overline{u}(r)) \bigr ]   \dd r \Biggr ] \dd s  \nonumber \\
&~~~ + \int_0^T \Bigl ( \lambda h_x(\overline{x}_0,\overline{x}(T)) + \xi_2^\top \Bigr ) \mathds{1}_{[0,T)}(s) \frac{f(T,s,\overline{x}(s),u(s)) - f(T,s,\overline{x}(s),\overline{u}(s))}{(T-s)^{1-\alpha}} \dd s  \nonumber \\
&~~~ + \int_0^T \Bigl ( \lambda h_x(\overline{x}_0,\overline{x}(T)) + \xi_2^\top \Bigr ) \Bigl [ g(T,s,\overline{x}(s),u(s)) - g(T,s,\overline{x}(s),\overline{u}(s))  \Bigr ] \dd s  \nonumber \\
&~~~ + \int_0^T \lambda  \Bigl [ l(s,\overline{x}(s),u(s)) - l(s,\overline{x}(s),\overline{u}(s)) \Bigr ] \dd s	 \nonumber \\
& = \int_0^T \Biggl [ \int_s^T - p(r)^\top   \frac{f(r,s,\overline{x}(s),u(s)) - f(r,s,\overline{x}(s),\overline{u}(s))}{(r-s)^{1-\alpha}} \dd r  \nonumber \\
&~~~~~~~~~~ + \int_s^T - p(r)^\top \bigl [ g(r,s,\overline{x}(s),u(s)) - g(r,s,\overline{x}(s),\overline{u}(s)) \bigr ] \dd r    \nonumber \\
&~~~~~~~~~~ + \Bigl ( \lambda h_x(\overline{x}_0,\overline{x}(T)) + \xi_2^\top \Bigr ) \mathds{1}_{[0,T)}(s) \frac{f(T,s,\overline{x}(s),u(s)) - f(T,s,\overline{x}(s),\overline{u}(s))}{(T-s)^{1-\alpha}}  \nonumber \\
&~~~~~~~~~~ + \Bigl ( \lambda h_x(\overline{x}_0,\overline{x}(T)) + \xi_2^\top \Bigr ) \bigl [ g(T,s,\overline{x}(s),u(s)) - g(T,s,\overline{x}(s),\overline{u}(s))  \bigr ]  \nonumber \\
&~~~~~~~~~~ + \lambda  \bigl [ l(s,\overline{x}(s),u(s)) - l(s,\overline{x}(s),\overline{u}(s)) \bigr ]  \Biggr ] \dd s. \nonumber
\end{align}

Let us define for $s \in [0,T]$,
\begin{align*}
& \Lambda(s,\overline{x}(s),u) := \int_s^T p(r)^\top \frac{f(r,s,\overline{x}(s),u)}{(s-r)^{1-\alpha}} \dd r - \mathds{1}_{[0,T)}(s)  \Bigl ( \lambda h_x(\overline{x}_0,\overline{x}(T)) + \xi_2^\top \Bigr ) \frac{f(T,s,\overline{x}(s),u)}{(T-s)^{1-\alpha}} \\
&~~~~~ + \int_s^T p(r)^\top g(r,s,\overline{x}(s),u) \dd r - \Bigl ( \lambda h_x(\overline{x}_0,\overline{x}(T)) + \xi_2^\top \Bigr )  g(T,s,\overline{x}(s),u) - \lambda l(s,\overline{x}(s),u).
\end{align*}
Then we observe that (\ref{eq_4_33}) becomes
\begin{align*}
\int_0^T \Lambda(s,\overline{x}(s),u(s)) \dd s \leq 	\int_0^T \Lambda(s,\overline{x}(s),\overline{u}(s)) \dd s.
\end{align*}

As $U$ is separable, there exists a countable dense set $U_i = \{u_i,~ i \geq 1\} \subset U$. Moreover, there exists a measurable set $S_i \subset [0,T]$ such that $|S_i| = T$ and any $t \in S_i$ is the Lebesgue point of $\Lambda(t,\overline{x}(t),u(t))$, i.e., $\lim_{\tau \downarrow 0} \frac{1}{2\tau} \int_{t-\tau}^{t+\tau} \Lambda(s,\overline{x}(s),u(s)) \dd s = \Lambda(t,\overline{x}(t),u(t))$ \cite[Theorem 5.6.2]{Bogachev_book}. We fix $u_i \in U_i$. For any $ t \in S_i$, define
\begin{align*}
u(s) : = \begin{cases}
 	\overline{u}(s), & s \in [0,T] \setminus (t-\tau,t+\tau),\\
 	u_i, & s \in (t-\tau,t+\tau).
 \end{cases}	
\end{align*}
It then follows that
\begin{align*}
0 \leq \lim_{\tau \downarrow 0} \frac{1}{2\tau} \int_{t - \tau}^{t + \tau} \bigl [ 	\Lambda(s,\overline{x}(s),\overline{u}(s)) - \Lambda(s,\overline{x}(s),u_i) \bigr ] \dd s  = \Lambda  (t,\overline{x}(t),\overline{u}(t)) - \Lambda  (t,\overline{x}(t),u_i).
\end{align*}
Since $\cap_{i \geq 1} S_i = [0,T]$, $\Lambda$ is continuous in $u \in U$, and $U$ is separable, we must have
\begin{align*}
\Lambda(t,\overline{x}(t),u) \leq \Lambda(t,\overline{x}(t),\overline{u}(t)),~ \forall u \in U,~  \textrm{a.e.}~ t \in [0,T],
\end{align*}
which proves the Hamiltonian-like maximum condition in Theorem \ref{Theorem_3_1}. This is the end of the proof for Theorem \ref{Theorem_3_1}.

%

\begin{appendices}
	
\section*{Appendices}

We provide some preliminary results, and obtain the well-posedness and estimates of general Volterra integral equations having singular and nonsingular kernels. To simplify the notation, we use $\|\cdot\|_{q} := \|\cdot\|_{L^q([0,T];\mathbb{R}^n)}$ and $\|\cdot\|_{p} := \|\cdot\|_{L^p([0,T];\mathbb{R}^n)}$.



\section{Preliminaries}\label{Appendix_A}


\begin{lemma}
\label{Lemma_A_2}
Let $\alpha \in (0,1)$. Suppose that $\frac{1}{q} + 1 = \frac{1}{p} + \frac{1}{r}$ with $p,q \geq 1$ and $ r \in [1, \frac{1}{1-\alpha})$. Then for any $a<b$, $ \tau \in (0,b-a]$, and $\psi(\cdot) \in L^p([a,b];\mathbb{R})$,
\begin{align*}
\Bigl ( \int_{a}^{a+\tau} \Bigl | \int_a^{t} \frac{\psi(s)}{(t-s)^{1-\alpha}} \dd s \Big |^q \dd t \Bigr )^{\frac{1}{q}} &\leq \Bigl ( \frac{\tau^{1-r(1-\alpha)}}{1-r(1-\alpha)} \Bigr)^{\frac{1}{r}} \|\psi(\cdot) \|_{L^p([a,b];\mathbb{R})}, \\
\Bigl ( \int_a^{a+\tau} \Bigl | \int_a^t \psi(s) \dd s \Bigr |^q \dd t \Bigr )^{\frac{1}{q}} & \leq  \tau^{\frac{1}{r}} \|\psi(\cdot) \|_{L^p([a,b];\mathbb{R})}.
\end{align*}
\end{lemma}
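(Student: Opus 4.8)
The plan is to recognize both quantities as $L^q$ norms of convolutions and to invoke Young's convolution inequality, whose exponent balance $\frac{1}{q}+1 = \frac{1}{p}+\frac{1}{r}$ is precisely the hypothesis imposed on $(p,q,r)$. Since $p,q \geq 1$ and $r \in [1,\frac{1}{1-\alpha}) \subset [1,\infty)$, all three exponents lie in $[1,\infty)$, so Young's inequality applies.

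First I would extend $\psi$ by zero outside $[a,b]$, writing $\widetilde{\psi} := \psi \mathds{1}_{[a,b]}$ as a function on $\mathbb{R}$, and introduce the truncated kernel $K(u) := u^{\alpha-1} \mathds{1}_{(0,\tau]}(u)$. The key observation is a support computation: for $t \in [a,a+\tau]$ the requirement $0 < t-s \leq \tau$ forces $s \geq t-\tau \geq a$, so the convolution $(\widetilde{\psi} * K)(t) = \int_{\mathbb{R}} \widetilde{\psi}(s)(t-s)^{\alpha-1}\mathds{1}_{(0,\tau]}(t-s)\,\dd s$ collapses to exactly $\int_a^t \frac{\psi(s)}{(t-s)^{1-\alpha}}\,\dd s$. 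Hence the integrand in the first display is the restriction of $\widetilde{\psi} * K$ to $[a,a+\tau]$.

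Next I would compute the $L^r$ norm of the kernel: $\|K\|_{L^r(\mathbb{R})}^r = \int_0^\tau u^{r(\alpha-1)}\,\dd u$, which is finite precisely because $r < \frac{1}{1-\alpha}$ yields $r(\alpha-1) > -1$; evaluating gives $\frac{\tau^{1-r(1-\alpha)}}{1-r(1-\alpha)}$, so that $\|K\|_{L^r} = \bigl(\frac{\tau^{1-r(1-\alpha)}}{1-r(1-\alpha)}\bigr)^{1/r}$. Young's inequality then gives $\|\widetilde{\psi} * K\|_{L^q(\mathbb{R})} \leq \|\widetilde{\psi}\|_{L^p(\mathbb{R})}\,\|K\|_{L^r(\mathbb{R})}$, and restricting the left-hand norm to $[a,a+\tau]$ (which only decreases it) together with the identity $\|\widetilde{\psi}\|_{L^p(\mathbb{R})} = \|\psi(\cdot)\|_{L^p([a,b];\mathbb{R})}$ delivers the first inequality. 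For the second inequality I would repeat the argument verbatim with $K$ replaced by $\widetilde{K}(u) := \mathds{1}_{(0,\tau]}(u)$, for which the same support computation gives $\int_a^t \psi(s)\,\dd s = (\widetilde{\psi} * \widetilde{K})(t)$ on $[a,a+\tau]$ and $\|\widetilde{K}\|_{L^r} = \tau^{1/r}$.

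The only genuinely delicate point is the bookkeeping of supports: one must verify both that truncating the kernel to $(0,\tau]$ is harmless over the range $t \in [a,a+\tau]$ and that the mass of $\psi$ on $(a+\tau,b]$ does not contribute to the convolution on that range. This is exactly what the support computation in the second paragraph settles, after which the proof reduces to the finiteness of $\|K\|_{L^r}$, governed by the threshold $r < \frac{1}{1-\alpha}$ that makes the singular kernel $r$-integrable. No additional hypotheses are needed, and the two estimates follow in parallel.
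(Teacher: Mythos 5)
Your proposal is correct and follows essentially the same route as the paper's proof: both recognize the two integrals as convolutions of $\psi\mathds{1}_{[a,b]}$ with the truncated kernels $u^{\alpha-1}\mathds{1}_{(0,\tau]}(u)$ and $\mathds{1}_{(0,\tau]}(u)$, apply Young's convolution inequality under the exponent relation $\frac{1}{q}+1=\frac{1}{p}+\frac{1}{r}$, and evaluate the kernel's $L^r$ norm using $r<\frac{1}{1-\alpha}$. One slip to fix in your support computation: for $t\in[a,a+\tau]$ one has $t-\tau\leq a$ (not $t-\tau\geq a$), and it is precisely this inequality that makes the kernel truncation inactive on $[a,t)$ and forces the effective lower limit of integration to be $a$, so that the convolution collapses to $\int_a^t\frac{\psi(s)}{(t-s)^{1-\alpha}}\,\dd s$.
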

\begin{proof}
Let $\zeta_{\tau}(t) := \frac{1}{t^{1-\alpha}}	\mathds{1}_{(0,\tau]}(t)$. Note that $\mathds{1}_{(0,\tau]}(t-s) = 1$ for $t-s \in (0,\tau]$, otherwise $\mathds{1}_{(0,\tau]}(t-s) = 0$. It follows that (note that $t \vee s := \max \{t,s\}$ for $t,s \in [0,T]$)
\begin{align*}
(\psi(\cdot)\mathds{1}_{[a,b]} * \zeta_{\tau}(\cdot) )(t) & = \int_{a}^{b} \frac{\psi(s)}{(t-s)^{1-\alpha}} \mathds{1}_{(0,\tau]}(t-s) \dd s 
= \begin{cases}
\int_{a \vee t-\tau}^{t} \frac{\psi(s)}{(t-s)^{1-\alpha}} \dd s, & t \in [a,b],  \\
0 & t \notin [a,b].  
 \end{cases}
\end{align*}
This leads to
\begin{align*}
& (\psi(\cdot)\mathds{1}_{[a,b]} * \zeta_{\tau}(\cdot) )(t) = \begin{cases}
\int_{a}^{t} \frac{\psi(s)}{(t-s)^{1-\alpha}} \dd s, & t \in [a,a+\tau],  \\
0 & t \notin [a,a+\tau].  
 \end{cases}
\end{align*}
Hence, by Young's Inequality (see \cite[Theorem 3.9.4]{Bogachev_book}), for $\frac{1}{q} + 1 = \frac{1}{p} + \frac{1}{r}$ with $r \in [1,\frac{1}{1-\alpha})$,
\begin{align*}
\Bigl ( \int_a^{a+\tau} \Bigl | \int_a^t \frac{\psi(s)}{(t-s)^{1-\alpha}} \dd s \Bigr|^q \dd t \Bigr )^{\frac{1}{q}} 
& \leq \|\psi(\cdot)\|_{L^p([a,b];\mathbb{R})} \|\zeta_{\tau}(\cdot) \|_{L^r([0,\tau];\mathbb{R})}.
\end{align*}
Note that as $r(1-\alpha) \in [1-\alpha,1)$ with $1-\alpha > 0$,
\begin{align*}
	\|\zeta_{\tau}(\cdot) \|_{L^r([0,\tau];\mathbb{R})}^r = \int_0^{\tau}  t^{-r(1-\alpha)} \dd t = \frac{1}{1 - r(1-\alpha)} \tau^{1 - r(1-\alpha)}.
\end{align*}
This proves the first inequality. The second inequality can be shown in a similar way by letting $\zeta_{\tau}(t) :=	\mathds{1}_{(0,\tau]}(t)$. We complete the proof.
\end{proof}

\begin{lemma}[Lemma 2.3 of \cite{Lin_Yong_SICON_2020}]\label{Lemma_A_3}
Suppose that $\alpha \in (0,1)$ and $p \geq 1$. Assume that $\psi :\Delta \rightarrow \mathbb{R}^n$ is measurable with $\psi(0,\cdot) \in L^p([0,T];\mathbb{R}^n)$ satisfying $|\psi(t,s) - \psi(t^\prime,s)|_{\mathbb{R}^n} \leq \omega(|t-t^\prime|) \psi^\prime(s)$ for $t,t^\prime \in [0,T]$ and $s \in [0,T]$, where $\psi^\prime (\cdot) \in L^p([0,T];\mathbb{R})$ and $\omega$ is some modulus of continuity. Let
\begin{align*}
\varphi(t) &:= \int_0^t \frac{\psi(t,s)}{(t-s)^{1-\alpha}} \dd s,~ \textrm{a.e.}~ t \in [0,T]. 
\end{align*}
Then $\varphi(\cdot) \in L^p([0,T];\mathbb{R}^n)$ and $\|\varphi(\cdot)\|_{p} \leq \frac{T^\alpha}{\alpha} \Bigl ( \|\psi(0,\cdot)\|_{p} + \omega(T) \|\psi^\prime (\cdot) \|_{p} \Bigr )$.
Furthermore, if $p > \frac{1}{\alpha}$, then  $\varphi$ is continuous on $[0,T]$ and there is a constant $C$, independent from choice of $\varphi$, such that 
\begin{align*}
|\varphi(t)|_{\mathbb{R}^n} \leq C \Bigl ( 	\|\psi(0,\cdot)\|_{p} + \|\psi^\prime (\cdot) \|_{p} \Bigr ),~ \forall t \in [0,T].
\end{align*}
\end{lemma}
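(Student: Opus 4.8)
The plan is to treat the two assertions separately, reducing both to estimates for the fractional-integration kernel $k(r) := r^{\alpha-1}$. For the $L^p$ bound I would first use the given modulus-of-continuity hypothesis to decompose the integrand: since $|\psi(t,s)-\psi(0,s)|_{\mathbb{R}^n} \le \omega(|t|)\psi'(s) \le \omega(T)\psi'(s)$, we get the pointwise majorization
\begin{align*}
|\varphi(t)|_{\mathbb{R}^n} \le \int_0^t \frac{|\psi(0,s)|_{\mathbb{R}^n}}{(t-s)^{1-\alpha}}\dd s + \omega(T)\int_0^t \frac{\psi'(s)}{(t-s)^{1-\alpha}}\dd s .
\end{align*}
Extending $|\psi(0,\cdot)|$ and $\psi'(\cdot)$ by zero outside $[0,T]$, each term is a convolution of an $L^p$ function with $k(\cdot)=(\cdot)^{\alpha-1}\mathds{1}_{(0,T]}(\cdot)$, which lies in $L^1([0,T];\mathbb{R})$ with $\|k\|_{L^1}=\int_0^T r^{\alpha-1}\dd r = \frac{T^\alpha}{\alpha}$. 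Applying Young's convolution inequality in the borderline case $\tfrac1p+1=\tfrac1p+\tfrac11$ (i.e. with the $L^1$-kernel factor) then gives $\|\varphi(\cdot)\|_p \le \frac{T^\alpha}{\alpha}\big(\|\psi(0,\cdot)\|_p + \omega(T)\|\psi'(\cdot)\|_p\big)$, which is exactly the claimed constant. This is the routine half.

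For the regime $p>\tfrac1\alpha$, the key elementary observation is that the conjugate exponent $p'$ (with $\tfrac1p+\tfrac1{p'}=1$) satisfies $\beta := (1-\alpha)p' < 1$, precisely because $p>\tfrac1\alpha \iff \alpha p>1 \iff (1-\alpha)\tfrac{p}{p-1}<1$; hence $s\mapsto (t-s)^{-(1-\alpha)}$ is $p'$-integrable near the singularity $s=t$. The pointwise bound then follows from a single application of Hölder's inequality,
\begin{align*}
|\varphi(t)|_{\mathbb{R}^n} \le \Big(\int_0^t |\psi(t,s)|_{\mathbb{R}^n}^p\,\dd s\Big)^{1/p}\Big(\int_0^t (t-s)^{-\beta}\,\dd s\Big)^{1/p'} \le C\big(\|\psi(0,\cdot)\|_p + \|\psi'(\cdot)\|_p\big),
\end{align*}
where the kernel factor is $\big(\tfrac{t^{1-\beta}}{1-\beta}\big)^{1/p'}\le \big(\tfrac{T^{1-\beta}}{1-\beta}\big)^{1/p'}$ and the first factor is controlled by $\|\psi(0,\cdot)\|_p+\omega(T)\|\psi'(\cdot)\|_p$ using the decomposition above, with $\omega(T)$ absorbed into $C$.

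The genuinely delicate part is continuity. For $t<t'$ I would write $\varphi(t')-\varphi(t)=I_1+I_2+I_3$, where $I_1=\int_0^t \psi(t',s)\big[(t'-s)^{-(1-\alpha)}-(t-s)^{-(1-\alpha)}\big]\dd s$, $I_2=\int_0^t (t-s)^{-(1-\alpha)}\big(\psi(t',s)-\psi(t,s)\big)\dd s$, and $I_3=\int_t^{t'}(t'-s)^{-(1-\alpha)}\psi(t',s)\dd s$. The term $I_2$ vanishes as $t'\to t$ by the modulus-of-continuity hypothesis together with the kernel bound ($|I_2|\le \omega(t'-t)\,C\|\psi'\|_p$), and $I_3$ vanishes by Hölder because the tail kernel integral $\int_t^{t'}(t'-s)^{-\beta}\dd s=\tfrac{(t'-t)^{1-\beta}}{1-\beta}\to 0$. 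I expect \emph{$I_1$ to be the main obstacle}: the issue is bounding $\big(\int_0^t \big[(t-s)^{-(1-\alpha)}-(t'-s)^{-(1-\alpha)}\big]^{p'}\dd s\big)^{1/p'}$, where $p'>1$ forbids a naive linearization. I would resolve it with the superadditivity inequality $(a-b)^{p'}\le a^{p'}-b^{p'}$ valid for $a\ge b\ge 0$ and $p'\ge 1$ (convexity of $x\mapsto x^{p'}$ with value $0$ at the origin). Taking $a=(t-s)^{-(1-\alpha)}\ge b=(t'-s)^{-(1-\alpha)}$ collapses the offending integral into the explicit difference $\tfrac{1}{1-\beta}\big[t^{1-\beta}-(t')^{1-\beta}+(t'-t)^{1-\beta}\big]$, which tends to $0$ as $t'\to t$. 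Combining the three estimates (and arguing symmetrically for $t'<t$) yields continuity of $\varphi$ on $[0,T]$, completing the proof.
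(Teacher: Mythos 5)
Your proof is correct. A point of reference first: the paper does not actually prove this lemma itself --- it imports it verbatim as Lemma 2.3 of \cite{Lin_Yong_SICON_2020} --- so the only internal evidence of the intended argument is the paper's proof of the nonsingular analogue, Lemma \ref{Lemma_A_4}. Your handling of the two bounds matches the standard route exactly: the majorization $|\psi(t,s)|\le|\psi(0,s)|+\omega(T)\psi'(s)$ followed by Young's inequality $L^1*L^p\to L^p$ with $\|k\|_{L^1}=T^\alpha/\alpha$ gives the stated $L^p$ estimate, and for $p>\tfrac1\alpha$ the H\"older argument with $\beta=(1-\alpha)p'<1$ (equivalent to $\alpha p>1$) gives the uniform bound. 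Where you genuinely diverge is the continuity argument. The paper's template, visible in the three-term estimate $\omega(|t-t'|)T^{\frac{q-1}{q}}\|\psi'\|_p+\|\overline{\psi}\|_p\tau^{\frac{p-1}{p}}+\|\overline{\psi}\|_p(t'-t+\tau)^{\frac{p-1}{p}}$ in the proof of Lemma \ref{Lemma_A_4}, splits the integration domain at a small distance $\tau$ from the singular endpoint, bounds the near-singular pieces by H\"older uniformly in $t'$, and then performs a two-parameter limit ($t'\to t$, then $\tau\downarrow 0$). You instead attack the kernel-difference term $I_1$ directly over all of $[0,t]$, using the superadditivity inequality $(a-b)^{p'}\le a^{p'}-b^{p'}$ for $a\ge b\ge 0$, $p'\ge 1$, to collapse $\int_0^t\bigl[(t-s)^{\alpha-1}-(t'-s)^{\alpha-1}\bigr]^{p'}\dd s$ into the closed form $\tfrac{1}{1-\beta}\bigl[t^{1-\beta}-(t')^{1-\beta}+(t'-t)^{1-\beta}\bigr]$, which is in fact bounded by $\tfrac{1}{1-\beta}(t'-t)^{1-\beta}$ since $t^{1-\beta}\le(t')^{1-\beta}$. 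This is a real improvement in precision: you avoid the two-parameter limiting argument and obtain an explicit H\"older-type modulus of continuity for $\varphi$ (exponent $(1-\beta)/p'$, combined with $\omega$ from $I_2$), whereas the $\tau$-splitting argument is softer but transfers more easily to kernels whose difference admits no closed-form integral. Your $I_2$ and $I_3$ estimates, the absorption of $\omega(T)$ into $C$, and the symmetric case $t'<t$ are all in order, so the argument is complete.
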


\begin{lemma}\label{Lemma_A_4}
Suppose that $\alpha \in (0,1)$ and $q \geq 1$. Assume that $\psi :\Delta \rightarrow \mathbb{R}^n$ is measurable with $\psi(0,\cdot) \in L^p([0,T];\mathbb{R}^n)$ satisfying $|\psi(t,s) - \psi(t^\prime,s)|_{\mathbb{R}^n} \leq \omega(|t-t^\prime|) \psi^\prime(s)$ for $t,t^\prime \in [0,T]$ and $s \in [0,T]$, where $\psi^\prime (\cdot)  \in L^p([0,T];\mathbb{R})$ and $\omega$ is some modulus of continuity. Let
\begin{align*}
\hat{\varphi}(t) &:= \int_0^t \psi(t,s) \dd s,~ \textrm{a.e.}~ t \in [0,T].
\end{align*}
Then $\hat{\varphi} (\cdot) \in L^p([0,T];\mathbb{R}^n)$ and $\|\hat{\varphi} (\cdot) \|_{p} \leq T \Bigl ( \|\psi(0,\cdot)\|_{p} + \omega(T) \|\psi^\prime (\cdot) \|_{p} \Bigr )$.
Furthermore, if $p > \frac{1}{\alpha}$, then  $\hat{\varphi}$ is continuous on $[0,T]$ and there is a constant $C$, independent from choice of $\hat{\varphi}$, such that 
\begin{align*}
|\hat{\varphi}(t)|_{\mathbb{R}^n} \leq C \Bigl ( 	\|\psi(0,\cdot)\|_{p} + \|\psi^\prime (\cdot) \|_{p} \Bigr ),~ \forall t \in [0,T].	
\end{align*}
\end{lemma}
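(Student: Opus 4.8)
The plan is to mirror the proof of Lemma \ref{Lemma_A_3}, exploiting the fact that the integrand here carries no singular factor $(t-s)^{-(1-\alpha)}$, so that only elementary domination and H\"older's inequality are needed in place of Young's convolution inequality. First I would record a pointwise bound on the integrand: writing $\psi(t,s) = \psi(0,s) + [\psi(t,s)-\psi(0,s)]$ and using the hypothesis together with the monotonicity of $\omega$ and $t \leq T$, I obtain
\begin{align*}
|\psi(t,s)|_{\mathbb{R}^n} \leq |\psi(0,s)|_{\mathbb{R}^n} + \omega(|t|)\psi^\prime(s) \leq |\psi(0,s)|_{\mathbb{R}^n} + \omega(T)\psi^\prime(s),~ (t,s) \in \Delta.
\end{align*}

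For the $L^p$-estimate I would then bound $\hat{\varphi}$ pointwise and integrate. From the display above,
\begin{align*}
|\hat{\varphi}(t)|_{\mathbb{R}^n} \leq \int_0^t |\psi(t,s)|_{\mathbb{R}^n} \dd s \leq \int_0^T \bigl [ |\psi(0,s)|_{\mathbb{R}^n} + \omega(T)\psi^\prime(s) \bigr ] \dd s,
\end{align*}
and H\"older's inequality (with conjugate exponents $p$ and $\frac{p}{p-1}$, testing against the constant function $1$ on $[0,T]$) bounds the right-hand side by $T^{1-\frac{1}{p}} \bigl ( \|\psi(0,\cdot)\|_{p} + \omega(T)\|\psi^\prime(\cdot)\|_{p} \bigr )$. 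Since this is independent of $t$, raising to the $p$-th power and integrating over $[0,T]$ contributes a factor $T^{\frac{1}{p}}$, yielding exactly $\|\hat{\varphi}(\cdot)\|_{p} \leq T \bigl ( \|\psi(0,\cdot)\|_{p} + \omega(T)\|\psi^\prime(\cdot)\|_{p} \bigr )$, hence $\hat{\varphi}(\cdot) \in L^p([0,T];\mathbb{R}^n)$; measurability of $t \mapsto \hat{\varphi}(t)$ follows from Fubini's theorem applied to the measurable $\psi$ on $\Delta$. The same uniform pointwise bound already delivers $|\hat{\varphi}(t)|_{\mathbb{R}^n} \leq C \bigl ( \|\psi(0,\cdot)\|_{p} + \|\psi^\prime(\cdot)\|_{p} \bigr )$ with $C = T^{1-\frac{1}{p}}\max\{1,\omega(T)\}$, which is the estimate asserted in the ``furthermore'' part.

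It remains to establish continuity when $p > \frac{1}{\alpha}$. I would fix $0 \leq t^\prime < t \leq T$ and split
\begin{align*}
\hat{\varphi}(t) - \hat{\varphi}(t^\prime) = \int_0^{t^\prime} \bigl [ \psi(t,s) - \psi(t^\prime,s) \bigr ] \dd s + \int_{t^\prime}^t \psi(t,s) \dd s.
\end{align*}
The first integral is controlled by the time-Lipschitz hypothesis and H\"older by $\omega(|t-t^\prime|)\,T^{1-\frac{1}{p}}\|\psi^\prime(\cdot)\|_{p}$, which vanishes as $|t-t^\prime| \to 0$ by the defining property of $\omega$. For the second integral the pointwise bound and H\"older give the estimate $|t-t^\prime|^{1-\frac{1}{p}} \bigl ( \|\psi(0,\cdot)\|_{p} + \omega(T)\|\psi^\prime(\cdot)\|_{p} \bigr )$, and here the assumption $p > \frac{1}{\alpha} > 1$ ensures the exponent $1-\frac{1}{p} > 0$, so this term also tends to $0$. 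Combining the two, $\hat{\varphi}$ is (uniformly) continuous on $[0,T]$.

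I expect no serious obstacle: in contrast with Lemma \ref{Lemma_A_3}, the absence of the singular kernel removes the need for the convolution and fractional-integral machinery, and the whole argument reduces to the pointwise domination above plus two applications of H\"older's inequality. The only points requiring minor care are bookkeeping the powers of $T$ so that the constant in the $L^p$-bound is precisely $T$, and invoking $p > \frac{1}{\alpha}$ exactly where the positivity of $1-\frac{1}{p}$ is needed in the continuity step.
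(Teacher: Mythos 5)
Your proof is correct and follows essentially the same route as the paper's: dominate $|\psi(t,s)|_{\mathbb{R}^n}$ by the $L^p$ majorant $|\psi(0,s)|_{\mathbb{R}^n}+\omega(T)\psi^\prime(s)$, obtain the $L^p$ bound (with constant exactly $T$) and the uniform bound via H\"older's inequality, and prove continuity by splitting the increment into a modulus-of-continuity term and a tail term, using $p>\frac{1}{\alpha}>1$ only to make the exponent $1-\frac{1}{p}$ positive. The sole difference is cosmetic: the paper's continuity estimate carries an auxiliary parameter $\tau$ inherited from the singular-kernel Lemma \ref{Lemma_A_3}, whereas your direct split of the integral at $t^\prime$ dispenses with it.
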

\begin{proof}
The proof is analogous to that for Lemma \ref{Lemma_A_3}. Indeed, to prove the continuity, note that
\begin{align*}
|\psi(t,s)|_{\mathbb{R}^n} \leq |\psi(0,s)|_{\mathbb{R}^n} + \omega(t)\psi^\prime(s) =: \overline{\psi}(s),~ (t,s) \in \Delta,
\end{align*}
where $\overline{\psi} \in L^p([0,T];\mathbb{R})$, since $\psi(0,\cdot) \in L^p([0,T];\mathbb{R}^n)$ and $\psi^\prime \in L^p([0,T];\mathbb{R})$. It holds that
\begin{align*}
& |\hat{\varphi}(t) - \hat{\varphi}(t^\prime) |_{\mathbb{R}^n}	
\leq \omega(|t-t^\prime|) T^{\frac{q-1}{q}} \|\psi^\prime  (\cdot) \|_{p} + \|\overline{\psi} (\cdot)\|_{p} \tau^{\frac{p-1}{p}} + \|\overline{\psi} (\cdot) \|_{p}(t^\prime-t+\tau)^{\frac{p-1}{p}}.
\end{align*}
Then the rest of the proof is similar to that of Lemma \ref{Lemma_A_3}; thus completing the proof.
\end{proof}

\begin{lemma}[Gronwall-type inequality with the presence of singular and nonsingular kernels]
\label{Lemma_A_5}
Assume that $\alpha \in (0,1)$ and $p > \frac{1}{\alpha}$.
Let $P(\cdot) \in L^p([0,T];\mathbb{R})$ and $b(\cdot),z(\cdot) \in L^{\frac{p}{p-1}}([0,T];\mathbb{R})$, where $b$, $z$, and $P$ are nonnegative functions. Suppose that the following holds:
\begin{align}
\label{eq_a_1}
z(t) \leq b(t) + \int_0^t  \frac{P(s)z(s)}{(t-s)^{1-\alpha}} \dd s + \int_0^t 	P(s) z(s) \dd s,~ \textrm{a.e.}~ t \in [0,T].
\end{align}
Then there exists a constant $C \geq 0$ such that
\begin{align*}
z(t) \leq b(t) + C \int_0^t \frac{P(s)b(s)}{(t-s)^{1-\alpha}} \dd s + C \int_0^t P(s) b(s) \dd s,~ \textrm{a.e.}~ t \in [0,T].	
\end{align*}
\end{lemma}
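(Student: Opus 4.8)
The plan is to read (\ref{eq_a_1}) as an iteration estimate for the linear operator $(\mathcal{T}\phi)(t) := \int_0^t \frac{P(s)\phi(s)}{(t-s)^{1-\alpha}}\,\dd s + \int_0^t P(s)\phi(s)\,\dd s$, which acts on nonnegative functions of $L^{\frac{p}{p-1}}([0,T];\mathbb{R})$: writing $q:=\frac{p}{p-1}$, the hypothesis $p>\frac1\alpha$ gives $q<\frac{1}{1-\alpha}$, so by H\"older's inequality ($\frac1p+\frac1q=1$) and Lemma \ref{Lemma_A_2} the operator $\mathcal{T}$ is well defined and finite a.e. Since $P$ and both kernels are nonnegative, $\mathcal{T}$ is order preserving on the cone of nonnegative functions, and (\ref{eq_a_1}) reads $z\le b+\mathcal{T}z$. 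Substituting this bound into itself repeatedly yields, for every $N\ge1$, $z(t)\le \sum_{k=0}^{N-1}(\mathcal{T}^k b)(t)+(\mathcal{T}^N z)(t)$ for a.e.\ $t\in[0,T]$. It therefore suffices to prove that (a) the remainder $(\mathcal{T}^N z)(t)\to0$ as $N\to\infty$, and (b) the series $\sum_{k\ge1}\mathcal{T}^k b$ is dominated by a constant multiple of $\mathcal{T}b$, which is exactly the right-hand side of the asserted inequality.

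The core of the argument is a quantitative estimate on the iterated kernels $k_n$ defined through $(\mathcal{T}^n\phi)(t)=\int_0^t k_n(t,s)\phi(s)\,\dd s$, with $k_1(t,s)=\frac{P(s)}{(t-s)^{1-\alpha}}+P(s)$ and $k_{n+1}(t,s)=\int_s^t k_1(t,r)k_n(r,s)\,\dd r$. The plan is to factor the innermost weight $P(s)$ out of every kernel and to bound the remaining nested convolutions by interleaving, at each inner node, H\"older's inequality (each node contributing a factor $\|P\|_{L^p}$, legitimate since $q<\frac1{1-\alpha}$) with the Beta-function identity $\int_s^t (t-r)^{\alpha-1}(r-s)^{\beta-1}\,\dd r=\mathrm{B}(\alpha,\beta)(t-s)^{\alpha+\beta-1}$, in the spirit of the convolution computation behind Lemma \ref{Lemma_A_2}. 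Each additional convolution against the singular kernel raises the power of $(t-s)$ by the fixed positive amount $\alpha-\frac1p>0$ while producing a $\Gamma$-type gain in the denominator; consequently one obtains a bound of the form $k_n(t,s)\le P(s)\frac{(C\|P\|_{L^p})^{\,n-1}}{\Gamma(c\,n)}\big[(t-s)^{-(1-\alpha)}+1\big]$ for $s<t$, with $c>0$, in which the only genuine singularity at $s\uparrow t$ sits in the $n=1$ term, every term with $n\ge2$ carrying the milder exponent $(n-1)(\alpha-\frac1p)-(1-\alpha)>-(1-\alpha)$.

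With this estimate the two remaining points follow. For (a), H\"older's inequality against $z\in L^{q}$ gives $(\mathcal{T}^N z)(t)\le \frac{(C\|P\|_{L^p})^{N}}{\Gamma(cN)}\,\|z\|_{L^{q}}$ up to a bounded factor, and the super-exponential growth of $\Gamma$ forces this to tend to $0$. For (b), summing the kernel bound over $n$ yields a finite resolvent $R(t,s):=\sum_{n\ge1}k_n(t,s)\le C\big(\frac{P(s)}{(t-s)^{1-\alpha}}+P(s)\big)$, because the aggregate of the $n\ge2$ contributions is controlled near the diagonal by the $n=1$ singularity and away from it by $P(s)$. Letting $N\to\infty$ in the iterated inequality then gives $z(t)\le b(t)+\int_0^t R(t,s)b(s)\,\dd s\le b(t)+C\int_0^t\frac{P(s)b(s)}{(t-s)^{1-\alpha}}\,\dd s+C\int_0^t P(s)b(s)\,\dd s$, which is the claim.

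The main obstacle is the estimate of the nested kernels in the middle step: because the weight $P$ is a genuine $L^p$ function rather than a constant, the clean Beta-function bookkeeping must be coupled at every node with an application of H\"older's inequality, and one must track separately the four families of cross-terms (singular$\circ$singular, singular$\circ$nonsingular, nonsingular$\circ$singular, nonsingular$\circ$nonsingular) and verify that they all collapse into the two claimed terms with a convergent aggregate constant, rather than generating kernels of a new type. Keeping the $\Gamma$-type gain visible through these repeated H\"older steps---so that both the series in (b) converges and the remainder in (a) vanishes---is the delicate accounting that makes the proof work.
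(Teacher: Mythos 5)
Your proposal is correct in outline, but it takes a genuinely different route from the paper's proof. You run the full Neumann-series/resolvent argument: iterate $z\le b+\mathcal{T}z$ indefinitely, bound every iterated kernel $k_n$, sum them into a resolvent $R(t,s)\le C\bigl(P(s)(t-s)^{\alpha-1}+P(s)\bigr)$, and kill the remainder $\mathcal{T}^N z$ using the factorial ($\Gamma$-type) decay of the kernel bounds. The paper iterates only \emph{finitely} many times: it first absorbs the nonsingular kernel into the singular one via $\hat P(s;\alpha):=P(s)+(t-s)^{1-\alpha}P(s)$, then uses exactly your mechanism (Fubini, H\"older with exponents $p$ and $\tfrac{p}{p-1}$, and the Beta-function change of variables) to show that each substitution of the inequality into itself raises the singularity exponent by the fixed amount $\alpha-\tfrac1p>0$, i.e.\ $\alpha^{(i)}=\alpha+i(\alpha-\tfrac1p)$; once $\alpha^{(k)}\ge 1$ the $z$-terms carry a bounded kernel, and the classical Gronwall inequality finishes the proof. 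The paper's choice buys simplicity: with finitely many iterations the constants $c^{(i)}$ may grow geometrically, so no factorial decay and no series convergence ever need to be established---precisely the ``delicate accounting'' you flag as your main obstacle. Your choice buys self-containedness (no appeal to the standard Gronwall lemma) and yields the resolvent bound itself, but two points of execution deserve care. First, the induction must carry the improving exponent $(t-s)^{\beta_n-1}$, $\beta_n=\alpha+(n-1)(\alpha-\tfrac1p)$ (suitably capped once $\beta_n\ge 1$); if instead one feeds the crude bracket $(t-s)^{-(1-\alpha)}+1$ of your displayed bound back into the induction, the Beta factors no longer decay in $n$, the recursion only produces $(C\|P\|_{L^p})^{n}$ with no $\Gamma(cn)$ gain, and the series can diverge---so your displayed estimate is the post-processed conclusion, not an admissible inductive hypothesis. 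Second, your step (a) applies H\"older against $z\in L^{p/(p-1)}$ with the factor $P(s)\bigl[(t-s)^{\alpha-1}+1\bigr]$, which need not lie in $L^p$ in $s$ (already $(1-\alpha)p>1$ is possible under $p>\tfrac1\alpha$); the repair is immediate: since $P(\cdot)z(\cdot)\in L^1$, Fubini gives $\int_0^t P(s)\bigl[(t-s)^{\alpha-1}+1\bigr]z(s)\,\dd s<\infty$ for a.e.\ $t$ (the paper records this observation at the start of its proof), so the factorial prefactor alone forces $\mathcal{T}^N z\to 0$ pointwise a.e., which is all the limiting argument needs.
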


\begin{proof}
By the H\"older's inequality, we have $P(\cdot)z(\cdot) \in L^1([0,T];\mathbb{R})$, which implies that the two integrals on the right-hand side of (\ref{eq_a_1}) are well-defined in the $L^1$ sense. Below, there are several generic constants, whose values vary from line to line.

We remove the singularity of the right-hand side of (\ref{eq_a_1}). As (\ref{eq_a_1}) is linear in $z$, consider,
\begin{align*}
	z(t) \leq b(t) + \int_0^t \frac{\hat{P}(s;\alpha) z(s)}{(t-s)^{1-\alpha}} \dd s,~ \textrm{a.e.}~ t \in [0,T],
\end{align*}
where $\hat{P}(s;\alpha) := P(s)  + (t-s)^{1-\alpha} P(s)$ with $(t,s) \in \Delta$. Note that $\hat{P}(\cdot;\alpha) \in L^p([0,T];\mathbb{R})$ is nonnegative, and since $1-\alpha > 0$, we have $\|\hat{P}(\cdot;\alpha)\|_{p} \leq  \|P(\cdot)\|_{p} + T^{1-\alpha} \|P(\cdot)\|_{p} \leq C \|P(\cdot)\|_{p}$.

It follows that 
\begin{align}
\label{eq_A_2_23234234}
z(t) 
& \leq b(t) + \int_0^t \frac{P(s) b(s)}{(t-s)^{1-\alpha}} \dd s + \int_0^t P(s) b(s) \dd s + \int_0^t 	\frac{\hat{P}(s;\alpha)}{(t-s)^{1-\alpha}} \int_0^s \frac{\hat{P}(\tau;\alpha)}{(s-\tau)^{1-\alpha}} z(\tau) \dd \tau \dd s,
\end{align}
Notice that the double integral above represents the integration over the triangular region with base and height of $t$, where the integration is performed vertically and then horizontally with respect to $\tau$ and $s$, respectively. Alternatively, we may reverse the order of the double integration above, i.e.,
\begin{align}
\label{eq_a_3_345345}
 \int_0^t \frac{\hat{P}(s;\alpha) }{(t-s)^{1-\alpha}} \int_0^s \frac{\hat{P}(\tau;\alpha)}{(s-\tau)^{1-\alpha}} z(\tau) \dd \tau \dd s 
& = \int_0^t \int_{s}^t \frac{\hat{P}(\tau;\alpha) \hat{P}(s;\alpha)}{(t-\tau)^{1-\alpha} (\tau-s)^{1-\alpha}}   z(s) \dd \tau \dd s.
\end{align}

Let $v := \frac{\tau-s}{t-s}$. Note that $\tau$ varies from $s$ to $t$, which implies $v$ varies from $0$ to $1$. Moreover, $\tau = s+ (t-s) v $ and $\dd \tau = (t-s) \dd v$.  Then using the H\"older's inequality and changing the integration variable, the integration in (\ref{eq_a_3_345345}) can be evaluated by
\begin{align*}	
& \int_0^t \int_{s}^t \frac{\hat{P}(\tau;\alpha) \hat{P}(s;\alpha)}{(t-\tau)^{1-\alpha} (\tau-s)^{1-\alpha}}  \dd \tau z(s) \dd s \\
& \leq C \Biggl ( \|P(\cdot)\|_{p} \int_0^t P(s) \Bigl ( \int_s^t \frac{1}{(t-\tau)^{ \frac{(1-\alpha)p}{p-1}} (\tau-s)^{ \frac{(1-\alpha)p}{p-1}}} \dd \tau \Bigr)^{\frac{p-1}{p}} z(s) \dd s + \|P(\cdot)\|_{p} T^{\frac{p-1}{p}} \int_0^t P(s) z(s) \dd s \Biggr ) \\
& = C^{(1)} \Biggl ( \|P(\cdot)\|_{p} \int_0^t \frac{P(s) z(s) }{(t-s)^{2(1-\alpha) - \frac{p-1}{p}}}
\Bigl (\int_0^1 \frac{1}{(1-v)^{(1-\alpha) \frac{p}{p-1}} v^{(1-\alpha) \frac{p}{p-1}}  }  \dd v \Bigr )^{\frac{p-1}{p}} \dd s \\
&~~~~~~~~~~ + \|P(\cdot)\|_{p} T^{\frac{p-1}{p}} \int_0^t P(s) z(s) \dd s \Biggr ).
\end{align*}
Let $\alpha[\alpha]  := 1 - (1-\alpha) \frac{p}{p-1} = \frac{\alpha p - 1}{p-1} \in (0,1)$ and $\alpha^{(1)} := 1 - \Bigl ( 2(1-\alpha) - \frac{p-1}{p} \Bigr )  = 2 \alpha - \frac{1}{p} = \alpha + \Bigl (\alpha - \frac{1}{p} \Bigr ) > \alpha$ (note that $p > \frac{1}{\alpha}$). We can show that
\begin{align*}
& C^{(1)} \|P(\cdot)\|_{p} \int_0^t \frac{P(s) z(s) }{(t-s)^{2(1-\alpha) - \frac{p-1}{p}}}
\Bigl (\int_0^1 \frac{1}{(1-v)^{1-\alpha[\alpha]} v^{1-\alpha[\alpha]}  }  \dd v \Bigr )^{\frac{p-1}{p}} \dd s	\\
& = C^{(1)} \|P(\cdot)\|_{p} B(\alpha[\alpha], \alpha[\alpha])^{\frac{p-1}{p}}  \int_0^t \frac{P(s) z(s)}{(t-s)^{1-\alpha^{(1)}}}  \dd s =: \bar{c}^{(1)} \int_0^t \frac{P(s) z(s)}{(t-s)^{1-\alpha^{(1)}}}  \dd s,
\end{align*}
where $B$ is the beta function defined by $B(x,y) := \int_0^1  t^{x-1} (1-t)^{y-1} \dd t$ for $x,y > 0$. We also have $C^{(1)} \|P(\cdot)\|_{p} T^{\frac{p-1}{p}}	 \int_0^t P(s) z(s) \dd s =: \hat{c}^{(1)} \int_0^t P(s) z(s) \dd s$. Then with $c^{(1)} := \max \{ \bar{c}^{(1)}, \hat{c}^{(1)} \}$, (\ref{eq_a_3_345345}) is bounded above by
\begin{align*}	
\int_0^t \frac{\hat{P}(s;\alpha) }{(t-s)^{1-\alpha}} \int_0^s \frac{\hat{P}(\tau;\alpha)}{(s-\tau)^{1-\alpha}} z(\tau) \dd \tau \dd s  \leq c^{(1)} \Bigl (  \int_0^t \frac{P(s) z(s)}{(t-s)^{1-\alpha^{(1)}}}  \dd s + \int_0^t P(s) z(s) \dd s \Bigr ).
\end{align*}
Hence, by letting $c^{(0)} := 1$  and $\alpha^{(0)} := \alpha$, together with (\ref{eq_a_1}), (\ref{eq_A_2_23234234}) can be evaluated by
\begin{align}
\label{eq_a_2}	
z(t) 
& \leq b(t) + \int_0^t \frac{P(s) b(s)}{(t-s)^{1-\alpha}} \dd s + \int_0^t P(s) b(s) \dd s + \int_0^t 	\frac{\hat{P}(t,s)}{(t-s)^{1-\alpha}} \int_0^s \frac{\hat{P}(s,\tau)}{(s-\tau)^{1-\alpha}} z(\tau) \dd \tau \dd s \\
& \leq b(t) + \sum_{i=0}^{1} c^{(i)} \int_0^t \frac{P(s)b(s)}{(t-s)^{1-\alpha^{(i)}}} \dd s + \sum_{i=0}^{1} c^{(i)} \int_0^t P(s) b(s) \dd s  \nonumber \\
&~~~  + c^{(1)} \int_0^t \frac{\hat{P}(s;\alpha^{(1)})}{(t-s)^{1-\alpha^{(1)}}} \int_0^s \frac{\hat{P}(\tau;\alpha)}{(t-s)^{1-\alpha}} z(\tau) \dd \tau  \dd s,~ \textrm{a.e.}~ t \in [0,T]. \nonumber
\end{align}

Note that by using the same technique as above, we can show that
\begin{align*}
& c^{(1)} \int_0^t \frac{\hat{P}(s;\alpha^{(1)})}{(t-s)^{\alpha^{(1)}}} \int_0^s \frac{\hat{P}(\tau;\alpha)}{(s-\tau)^{1-\alpha}} z(\tau) \dd \tau  \dd s  = c^{(1)} \int_0^t \int_s^t \frac{\hat{P}(\tau;\alpha^{(1)})}{(t-\tau)^{1-\alpha^{(1)}}} \frac{\hat{P}(s;\alpha)}{(\tau-s)^{1-\alpha}} z(s) \dd \tau  \dd s \\
& \leq C^{(2)} c^{(1)} \Biggl ( \|P(\cdot)\|_{p} \int_0^t  \frac{P(s) z(s)}{(t-s)^{2-\alpha-\alpha^{(1)}  - \frac{p-1}{p} }} \Bigl ( \int_0^1 \frac{1}{(1-v)^{(1-\alpha^{(1)}) \frac{p}{p-1} } v^{ (1-\alpha) \frac{p}{p-1}  }} \dd v \Bigr )^{\frac{p-1}{p}} \dd s  \\
&~~~~~~~~~~ + \|P(\cdot)\|_{p} T^{\frac{p-1}{p}} \int_0^t P(s) z(s) \dd s \Biggr ).
\end{align*}
Let $\alpha[\alpha^{(1)}] := 1 - (1-\alpha^{(1)}) \frac{p}{p-1} = \frac{\alpha^{(1)} p - 1}{p-1} \in (0,1)$ and $\alpha^{(2)} := 1- \Bigl (2-\alpha-\alpha^{(1)} - \frac{p-1}{p} \Bigr )  = \alpha + 2 \Bigl ( \alpha - \frac{1}{p} \Bigr ) > \alpha^{(1)} > \alpha$. We can show that
\begin{align*}
& C^{(2)} c^{(1)} \|P(\cdot)\|_{p} \int_0^t  \frac{P(s) z(s)}{(t-s)^{2-\alpha-\alpha^{(1)}  - \frac{p-1}{p} }} \Bigl ( \int_0^1 \frac{1}{(1-v)^{1-\alpha[\alpha^{(1)}]} v^{1- \alpha[\alpha]}} \dd v \Bigr )^{\frac{p-1}{p}} \dd s  	 \\
& = C^{(2)} c^{(1)} \|P(\cdot)\|_{p} B(\alpha[\alpha],\alpha[\alpha^{(1)}])^{\frac{p-1}{p}} \int_0^t \frac{P(s)z(s)}{(t-s)^{1-\alpha^{(2)}}} \dd s =: \bar{c}^{(2)} \int_0^t \frac{P(s)z(s)}{(t-s)^{1-\alpha^{(2)}}} \dd s,
\end{align*}
and we have $C^{(2)} c^{(1)} \|P(\cdot)\|_{p} T^{\frac{p-1}{p}} \int_0^t P(s) z(s) \dd s =: \hat{c}^{(2)} \int_0^t P(s) z(s) \dd s$.
Let $c^{(2)} := \max \{ \bar{c}^{(2)}, \hat{c}^{(2)} \}$. Then using a similar approach, (\ref{eq_a_2}) can be evaluated by
\begin{align}
\label{eq_a_5_3423234234}
z(t) 
& \leq b(t) + \sum_{i=0}^{1} c^{(i)} \int_0^t \frac{P(s)b(s)}{(t-s)^{1-\alpha^{(i)}}} \dd s + \sum_{i=0}^{1} c^{(i)} \int_0^t P(s) b(s) \dd s     \\
&~~~ + c^{(2)} \int_0^t \frac{P(s)z(s)}{(t-s)^{1-\alpha^{(2)}}}\dd s + c^{(2)}  \int_0^t P(s) z(s) \dd s,~ \textrm{a.e.}~ t \in [0,T]. \nonumber
\end{align}
Proceeding similarly, using (\ref{eq_a_1}), (\ref{eq_a_5_3423234234}) is evaluated by
\begin{align*}
z(t) & \leq b(t) + \sum_{i=0}^{2} c^{(i)} \int_0^t \frac{P(s)b(s)}{(t-s)^{1-\alpha^{(i)}}} \dd s + \sum_{i=0}^{2} c^{(i)} \int_0^t P(s) b(s) \dd s   \\
&~~~ + c^{(3)} \int_0^t \frac{P(s)z(s)}{(t-s)^{1-\alpha^{(3)}}}\dd s + c^{(3)}  \int_0^t P(s) z(s) \dd s,~ \textrm{a.e.}~ t \in [0,T],
\end{align*}
where $\alpha[\alpha^{(2)}] := 1 - \frac{(1- \alpha^{(2)} )p}{p-1}  = \frac{\alpha^{(2)} p - 1}{p-1} \in (0,1)$, $\alpha^{(3)} := \alpha + 3 \Bigl ( \alpha - \frac{1}{p} \Bigr ) > \alpha$, $c^{(3)} := \max \{ \bar{c}^{(3)}, \hat{c}^{(3)} \}$, $\bar{c}^{(3)} := C^{(3)} c^{(2)} \|P(\cdot)\|_{p} B (\alpha[\alpha], \alpha[\alpha^{(2)}])$, and $\hat{c}^{(3)} := C^{(3)} c^{(2)} \|P(\cdot)\|_{p} T^{\frac{p-1}{p}}$.

Therefore, by induction, we are able to get
\begin{align*}
z(t) & \leq b(t) + \sum_{i=0}^{k-1} c^{(i)} \int_0^t \frac{P(s)b(s)}{(t-s)^{1-\alpha^{(i)}}} \dd s + \sum_{i=0}^{k-1} c^{(i)} \int_0^t P(s) b(s) \dd s   \\
&~~~ + c^{(k)} \int_0^t \frac{P(s)z(s)}{(t-s)^{1-\alpha^{(k)}}}\dd s + c^{(k)}  \int_0^t P(s) z(s) \dd s,~ \textrm{a.e.}~ t \in [0,T],
\end{align*}
where $c^{(0)} = 1$, $\alpha^{(0)} = \alpha$, and for $i=1,\ldots,k$,
\begin{align*}
\alpha[\alpha] & := \frac{\alpha p - 1}{p-1}  \in (0,1),~ \alpha^{(i)} := 	 \alpha + i \Bigl (\alpha - \frac{1}{p} \Bigr ) > \alpha,~ c^{(i)} := \max \{ \bar{c}^{(i)}, \hat{c}^{(i)} \},	 \\
\bar{c}^{(i)} &:= C^{(i)} c^{(i-1)} \|P(\cdot)\|_{p} B (\alpha[\alpha], \alpha[\alpha^{(i-1)}]),~ \hat{c}^{(i)} := C^{(i)} c^{(i-1)} \|P(\cdot)\|_{p} T^{\frac{p-1}{p}}.
\end{align*}

We observe that there is $k^\prime \geq 1$ such that $\alpha^{(k)} \geq 1$ for any $k \geq k^\prime$. Hence, with a fixed $k \geq k^\prime$, using the H\"older's inequality, it follows that
\begin{align*}
z(t) & \leq b(t) + \sum_{i=0}^{k-1} c^{(i)} \int_0^t \frac{P(s)b(s)}{(t-s)^{1-\alpha^{(i)}}} \dd s + \sum_{i=0}^{k-1} c^{(i)} \int_0^t P(s) b(s) \dd s   \\
&~~~ + c^{(k)} T^{\alpha^{(k)}-1} \int_0^t P(s)z(s) \dd s + c^{(k)} T^{\alpha^{(k)}-1}  \int_0^t P(s) z(s) \dd s,~ \textrm{a.e.}~ t \in [0,T].
\end{align*}
Notice that the integrals above do not have the singularity. Hence, there is a constant $C$ such that $\frac{c^{(i)}}{(t-s)^{1-\alpha^{(i)}}} \leq 
\frac{C}{(t-s)^{1-\alpha}}$ for all $i$ with $0 \leq i \leq k$ and $0 \leq s < t \leq T$. 
This implies that
\begin{align*}	
z(t) 
& \leq b(t) + C \int_0^t \frac{P(s) b(s)}{(t-s)^{1-\alpha}} \dd s + C \int_0^t P(s) b(s) \dd s\\
&~~~ + c^{(k)} T^{\alpha^{(k)}-1} \int_0^t P(s)z(s) \dd s + c^{(k)} T^{\alpha^{(k)}-1}  \int_0^t P(s) z(s) \dd s,~ \textrm{a.e.}~ t \in [0,T]. 
\end{align*}
Then we apply the standard Gronwall's inequality (see \cite[page 14]{Walter_book}) to obtain the desired result. This completes the proof of the lemma.
\end{proof}

\section{Well-posedness and Estimates of Volterra Integral Equations}\label{Appendix_B}

We prove Lemma \ref{Lemma_2_1} in a more general setting when the initial condition of (\ref{eq_1}) is also dependent on the outer time variable. Consider the following Volterra integral equation:
\begin{align}
\label{eq_b_1}
x(t) = x_0(t) + \int_0^t \frac{f(t,s,x(s),u(s))}{(t-s)^{1-\alpha}} \dd s + \int_0^t g(t,s,x(s),u(s)) \dd s,~ \textrm{a.e.}~ t \in [0,T].
\end{align}
Let $x(\cdot;x_0,u) := x(\cdot)$ be the solution of (\ref{eq_b_1}) under $(x_0(\cdot),u(\cdot)) \in L^p([0,T];\mathbb{R}^n) \times \mathcal{U}^p[0,T]$, where we recall
\begin{align*}
\mathcal{U}^p[0,T] = \Bigl \{u:[0,T] \rightarrow U~|~ \textrm{$u$ is measurable in $t \in [0,T]$} ~\&~ \rho(u(\cdot),u_0) \in L^p([0,T];\mathbb{R}_+) \Bigr \}
\end{align*}
Here, $(U,\rho)$ is a separable metric space, where $U \subset \mathbb{R}^d$ and $\rho$ is the metric induced by the standard Euclidean norm $|\cdot|_{\mathbb{R}^d}$

\begin{assumption}\label{Assumption_B_1}
For $p \geq 1$ and $\alpha \in (0,1)$, there are nonnegative $K_0(\cdot) \in L^{ (\frac{p}{1 + \alpha p} \vee 1)+}([0,T];\mathbb{R})$ and $K(\cdot) \in L^{ (\frac{1}{\alpha} \vee \frac{p}{p-1})+}([0,T];\mathbb{R})$, where $L^{p+}([0,T];\mathbb{R}^n) := \cup_{r > p} L^{r}([0,T];\mathbb{R}^n)$ for $1 \leq p < \infty$ and $t \vee s := \max \{t,s\}$ for $t,s \in [0,T]$, such that
\begin{align*}
\begin{cases}
	|f(t,s,x,u) - f(t,s,x^\prime,u^\prime)| + |g(t,s,x,u) - g(t,s,x^\prime,u^\prime)| \leq K(s) (|x-x^\prime| + \rho(u,u^\prime)) , \\
	~~~~~~~~~~ \forall (t,s) \in \Delta,~ x,x^\prime \in \mathbb{R}^n,~ u,u^\prime \in U, \\
	|f(t,s,0,u)| + |g(t,s,0,u)| \leq K_0(s),~ \forall (t,s) \in \Delta,~ u \in U.
\end{cases}
\end{align*}
\end{assumption}

\begin{lemma}\label{Lemma_B_1}
Let Assumption \ref{Assumption_B_1} hold. Assume that $p \geq 1$ and $\alpha \in (0,1)$. Then for any $(x_0(\cdot),u(\cdot)) \in L^p([0,T];\mathbb{R}^n) \times \mathcal{U}^p[0,T]$, (\ref{eq_b_1}) admits a unique solution in $L^p([0,T];\mathbb{R}^n)$. In addition, there is a constant $C \geq 0$ such that (\ref{eq_b_1}) holds the following estimate:
\begin{align}
\label{eq_b_1_1_1_2}
\Bigl \| x(\cdot;x_0,u) \Bigr \|_{p} \leq C \Bigl (1 + \|x_0(\cdot)\|_{p} + \|\rho(u(\cdot),u_0)\|_{L^p([0,T];\mathbb{R}_+)} \Bigr ).	
\end{align}
Furthermore, for any $x_0(\cdot),x_0^\prime(\cdot) \in L^p([0,T];\mathbb{R}^n)$ and $u(\cdot),u^\prime(\cdot) \in \mathcal{U}^p[0,T]$, there is a constant $C \geq 0$ such that
\begin{align}
\label{eq_b_1_1_1_3}
& \|x(\cdot;x_0,u) - 	x(\cdot;x_0^\prime,u^\prime) \|_{p}  \leq C  \|x_0(\cdot) - x_0^\prime(\cdot) \|_{p} \\
&~~~~~ + C\Biggl [ \int_0^T \Bigl ( \int_0^t \frac{|f(t,s,x(s;x_0,u),u(s)) - f(t,s,x(s;x_0,u),u^\prime(s))|}{(t-s)^{1-\alpha}} \dd s \Bigr )^p \dd t \Biggr]^{\frac{1}{p}}  \nonumber \\
&~~~~~ + C\Biggl [ \int_0^T \Bigl ( \int_0^t | g(t,s,x(s;x_0,u),u(s)) - g(t,s,x(s;x_0,u),u^\prime(s)) | \dd s \Bigr)^p  \dd t \Biggr ]^{\frac{1}{p}}. \nonumber
\end{align}
\end{lemma}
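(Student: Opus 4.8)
The plan is to recast (\ref{eq_b_1}) as a fixed-point equation $x = \mathcal{T}[x]$ on $L^p([0,T];\mathbb{R}^n)$, where
\[
\mathcal{T}[x](t) := x_0(t) + \int_0^t \frac{f(t,s,x(s),u(s))}{(t-s)^{1-\alpha}} \dd s + \int_0^t g(t,s,x(s),u(s)) \dd s,
\]
and to prove all three assertions using the convolution bound (Lemma \ref{Lemma_A_2}) and the generalized Gronwall inequality (Lemma \ref{Lemma_A_5}) as the two workhorses. The crucial structural observation, which is what lets Lemma \ref{Lemma_B_1} dispense with any continuity of the kernels in the outer variable $t$ (in contrast to Assumption \ref{Assumption_2_1}(ii)), is that every integrand entering the a priori bounds depends on $t$ only through the singular factor $(t-s)^{-(1-\alpha)}$. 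Indeed, the Lipschitz and growth hypotheses of Assumption \ref{Assumption_B_1} yield $|f(t,s,x,u)| + |g(t,s,x,u)| \le 2K_0(s) + 2K(s)(|x| + \rho(u,u_0))$, so the integrand is dominated by a function of $s$ alone. Hence Lemma \ref{Lemma_A_2}, applied with a H\"older pair satisfying $\frac{1}{q} + 1 = \frac{1}{p} + \frac{1}{r}$ and $r < \frac{1}{1-\alpha}$, shows that $\mathcal{T}$ maps $L^p$ into $L^p$; the exponent conditions on $K_0$ and $K$ in Assumption \ref{Assumption_B_1} are exactly what make the relevant H\"older products integrable.

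For existence and uniqueness I would first establish a local contraction. Writing $\mathcal{T}[x] - \mathcal{T}[x']$ and using the Lipschitz estimate, the difference is controlled by $\int_0^t (t-s)^{-(1-\alpha)} K(s)|x(s)-x'(s)| \dd s + \int_0^t K(s)|x(s)-x'(s)| \dd s$, whose integrand $K(s)|x(s)-x'(s)|$ is again $t$-independent. Lemma \ref{Lemma_A_2} on a subinterval $[0,\tau]$ therefore gives $\|\mathcal{T}[x]-\mathcal{T}[x']\|_{L^p([0,\tau];\mathbb{R}^n)} \le C(\tau)\|x-x'\|_{L^p([0,\tau];\mathbb{R}^n)}$ with $C(\tau) \downarrow 0$ as $\tau \downarrow 0$, by the explicit power-of-$\tau$ factor in that lemma. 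Choosing $\tau$ small makes $\mathcal{T}$ a contraction on $L^p([0,\tau];\mathbb{R}^n)$; since the same $\tau$ works on each successive block $[k\tau,(k+1)\tau]$ (the memory over $[0,k\tau]$ entering only as a fixed inhomogeneous term), patching the finitely many pieces yields a unique solution on $[0,T]$. An equivalent route is to iterate the weakly-singular kernel until the composite kernel loses its singularity --- the same beta-function, exponent-increase mechanism used inside the proof of Lemma \ref{Lemma_A_5} --- so that $\mathcal{T}^N$ is a contraction on all of $[0,T]$ for $N$ large, whence $\mathcal{T}$ has a unique fixed point.

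The growth estimate (\ref{eq_b_1_1_1_2}) follows by feeding the growth bound into the solution itself: with $\psi_0(s) := K_0(s) + K(s)\rho(u(s),u_0)$ and $b(t) := |x_0(t)| + C\int_0^t (t-s)^{-(1-\alpha)}\psi_0(s)\dd s + C\int_0^t \psi_0(s)\dd s$, the equation gives $|x(t)| \le b(t) + C\int_0^t (t-s)^{-(1-\alpha)} K(s)|x(s)| \dd s + C\int_0^t K(s)|x(s)| \dd s$. Applying Lemma \ref{Lemma_A_5} with $P = K$ bounds $|x(t)|$ by $b(t)$ plus weakly-singular convolutions of $Kb$; taking $L^p$ norms and invoking Lemma \ref{Lemma_A_2} once more (to bound $\|b\|_p$ and the convolution terms, whose integrands are $t$-independent) produces (\ref{eq_b_1_1_1_2}). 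The stability estimate (\ref{eq_b_1_1_1_3}) is obtained from the same Gronwall step applied to the difference equation: subtracting the two equations and splitting $f(t,s,x,u)-f(t,s,x',u') = [f(t,s,x,u')-f(t,s,x',u')] + [f(t,s,x,u)-f(t,s,x,u')]$ (Lipschitz on the first bracket, the second kept verbatim as the control-variation term appearing on the right of (\ref{eq_b_1_1_1_3})), and likewise for $g$, gives $|x-x'|(t) \le b(t) + C\int_0^t (t-s)^{-(1-\alpha)} K(s)|x-x'|(s) \dd s + C\int_0^t K(s)|x-x'|(s) \dd s$ with $b$ equal to $|x_0-x_0'|$ plus exactly the two control-variation integrals; Lemma \ref{Lemma_A_5} followed by Lemma \ref{Lemma_A_2} then delivers (\ref{eq_b_1_1_1_3}).

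The main obstacle is the exponent bookkeeping rather than any single estimate. To invoke Lemma \ref{Lemma_A_5} one must select an auxiliary index $q > \frac{1}{\alpha}$ for which $K(\cdot) \in L^q([0,T];\mathbb{R})$ --- available because Assumption \ref{Assumption_B_1} places $K$ in $L^{(\frac{1}{\alpha} \vee \frac{p}{p-1})+}$ --- and then certify that $b$ and the unknown lie in the companion space $L^{q/(q-1)}$ required by that lemma. Since $[0,T]$ carries a finite measure, the needed embeddings can be arranged by the choice of $q$, but this matching must be done carefully, and it is precisely here that the two integrability exponents in Assumption \ref{Assumption_B_1} are consumed. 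Verifying that each weakly-singular convolution lands back in $L^p$ (the condition $r < \frac{1}{1-\alpha}$ in Lemma \ref{Lemma_A_2}) under these choices is the recurring technical point that has to be checked at every application.
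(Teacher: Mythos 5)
Your proposal is correct and follows essentially the same route as the paper's proof: a contraction of the Volterra map on $L^p([0,\tau];\mathbb{R}^n)$ for small $\tau$ via Lemma \ref{Lemma_A_2} (exploiting that all integrands are dominated by functions of $s$ alone, hence the power-of-$\tau$ constant), patched over finitely many blocks by induction, followed by the generalized Gronwall inequality (Lemma \ref{Lemma_A_5}) applied to $|x|$ and $|x-x'|$ with the identical Lipschitz/control-variation splitting to obtain (\ref{eq_b_1_1_1_2}) and (\ref{eq_b_1_1_1_3}). The ``exponent bookkeeping'' you flag as the main obstacle is exactly what the paper carries out explicitly in its three-case analysis ($p>\tfrac{1}{1-\alpha}$, $1<p\leq\tfrac{1}{1-\alpha}$, $p=1$), choosing the auxiliary exponents $q$ and $\beta$ case by case precisely as you describe.
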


\begin{remark}\label{Remark_B_1}
\begin{enumerate}[(i)]
\item By Assumption \ref{Assumption_B_1}, we have
\begin{align*}
|f(t,s,x,u)| + |g(t,s,x,u)| & \leq K_0(s) + K(s)(|x| + \rho(u,u_0)),~ \forall (t,s) \in \Delta,~ (x,u) \in \mathbb{R}^n \times U.
\end{align*}
\item Unlike Assumption \ref{Assumption_2_1}, we do not assume $p > \frac{1}{\alpha}$ in Assumption \ref{Assumption_B_1}. In addition, the conditions of $K_0$ and $K$ in Assumption \ref{Assumption_B_1} are weaker than those in Assumption \ref{Assumption_2_1} when $p > \frac{1}{\alpha}$. Indeed, with $p > \frac{1}{\alpha}$, we can show that $K_0(\cdot) \in L^{\frac{1}{\alpha} + } ([0,T];\mathbb{R}) \subset L^{ (\frac{p}{1 + \alpha p} \vee 1)+}([0,T];\mathbb{R})$ and $K(\cdot) \in L^{\frac{p}{\alpha p - 1} + } ([0,T];\mathbb{R}) \subset   L^{ (\frac{1}{\alpha} \vee \frac{p}{p-1})+}([0,T];\mathbb{R})$. This means that Lemma \ref{Lemma_2_1} can be shown under the weaker assumption than Assumption \ref{Assumption_2_1}
\end{enumerate}
\end{remark}

\begin{proof}[Proof of Lemma \ref{Lemma_B_1}]
The main idea of the proof is the extension of \cite[Theorem 3.1]{Lin_Yong_SICON_2020}, where unlike \cite{Lin_Yong_SICON_2020} we have to consider the cross coupling characteristics between the singular and nonsingular kernels in (\ref{eq_b_1}). Furthermore, our proof provides a more detailed statement, which can be viewed as a refinement of \cite{Lin_Yong_SICON_2020}. 

We first use the contraction mapping argument to show the existence and uniqueness of the solution to (\ref{eq_b_1}). For $\tau \in [0,T]$, where $\tau$ will be determined below, let us define
\begin{align*}	
\mathcal{F}[x(\cdot)](t) := x_0(t) + \int_0^t \frac{f(t,s,x(s),u(s))}{(t-s)^{1-\alpha}} \dd s + \int_0^t g(t,s,x(s),u(s)) \dd s,~ \textrm{a.e.}~ t \in [0,\tau]. 
\end{align*}
For $q,p \geq 1$ and $r \in  [1,\frac{1}{1-\alpha})$, set $r= 1+\beta$, where $\beta \geq 0$ (equivalently, $\beta \in [0,\frac{\alpha}{1-\alpha})$) and $\frac{1}{p} + 1 = \frac{1}{q} + \frac{1}{1+\beta}$. By Lemma \ref{Lemma_A_2} and Remark \ref{Remark_B_1}, it follows that
\begin{align}
\label{eq_b_2}
& \|\mathcal{F}[x(\cdot)] (\cdot) \|_{L^p([0,\tau];\mathbb{R}^n)} \\ 
& \leq \|x_0(\cdot)\|_{p} + \Bigl ( \Bigl ( \frac{\tau^{1-(1+\beta)(1-\alpha)}}{1-(1+\beta)(1-\alpha)} \Bigr )^{\frac{1}{1+\beta}} + \tau^{\frac{1}{1+\beta}} \Bigr ) \Bigl \|K_0(\cdot) + K(\cdot)(|x(\cdot)| + \rho(u(\cdot),u_0) ) \Bigr \|_{L^q([0,\tau];\mathbb{R})}. \nonumber
\end{align}
Below, we consider the three different cases.

\subsubsection*{Case I: $p > \frac{1}{1-\alpha}$}
Note that $\frac{1}{p} < 1 - \alpha$.  Moreover, $\frac{1}{\alpha} > \frac{p}{p-1}$, $\frac{p}{1+\alpha p} > 1$, and $ 1+\beta < \frac{1}{1-\alpha} = 1+ \frac{\alpha}{1-\alpha}$ (equivalently, $\beta < \frac{\alpha}{1-\alpha}$). In this case, we have $K_0(\cdot) \in L^{\frac{p}{1+\alpha p}+}([0,T];\mathbb{R})$ and $K(\cdot) \in L^{\frac{1}{\alpha}+}([0,T];\mathbb{R})$. Observe that $\frac{1}{q} = \frac{1}{p} + 1 - \frac{1}{1+\beta} < 	\frac{1}{p} + 1 - \frac{1}{1 + \frac{\alpha}{1-\alpha}} = \frac{1}{p} + \alpha  < 1$ and $\frac{1}{q} - \frac{1}{p} = \frac{p-q}{pq} = 1 - \frac{1}{1+\beta} < 1 - \frac{1}{1+\frac{\alpha}{1-\alpha}} = \alpha < 1$, 
which implies $q \searrow \frac{p}{1+\alpha p} < 1$ and $\frac{pq}{p-q} \searrow \frac{1}{\alpha}$ as $\beta \nearrow \frac{\alpha}{1-\alpha}$. Hence, since $K_0(\cdot) \in L^{\frac{p}{1+\alpha p}+}([0,T];\mathbb{R})$ and $K(\cdot) \in L^{\frac{1}{\alpha}+}([0,T];\mathbb{R})$, we may choose $\beta$ close enough to $\frac{\alpha}{1-\alpha}$ so that $K_0 (\cdot)\in L^{q}([0,T];\mathbb{R})$ and $K (\cdot) \in L^{\frac{pq}{p-q}}([0,T];\mathbb{R})$. Therefore, as $\frac{p-q}{p} + \frac{q}{p} = 1$, it follows that
\begin{align*}
& \|K_0(\cdot) + K(\cdot)(|x(\cdot)| + u(\cdot))\|_{L^q([0,\tau];\mathbb{R})} \\
& \leq \|K_0(\cdot)\|_{L^q([0,T];\mathbb{R})} + \|K(\cdot)\|_{L^{\frac{pq}{p-q}}([0,T];\mathbb{R})} (\| x(\cdot)\|_{L^p([0,T];\mathbb{R}^n)} + \|\rho(u(\cdot),u_0) \|_{L^p([0,T];\mathbb{R})}).
\end{align*}
This, together with (\ref{eq_b_2}), implies
\begin{align}
\label{eq_b_3}
\bigl \|\mathcal{F}[x(\cdot)] (\cdot) \bigr \|_{L^p([0,\tau];\mathbb{R}^n)} & \leq \|x_0(\cdot)\|_{p} + \Bigl ( \Bigl ( \frac{\tau^{1-(1+\beta)(1-\alpha)}}{1-(1+\beta)(1-\alpha)} \Bigr )^{\frac{1}{1+\beta}} + \tau^{\frac{1}{1+\beta}} \Bigr ) \Bigl [  \|K_0(\cdot)\|_{L^q([0,T];\mathbb{R})}	  \\
&~~~  + \|K(\cdot)\|_{L^{\frac{pq}{p-q}}([0,T];\mathbb{R})} \Bigl ( \| x (\cdot) \|_{L^p([0,\tau];\mathbb{R}^n)} + \|\rho(u(\cdot),u_0) \|_{L^p([0,\tau];\mathbb{R})} \Bigr ) \Bigr ]. \nonumber
\end{align}
This shows $\mathcal{F}[x(\cdot)]: L^p([0,\tau];\mathbb{R}^n) \rightarrow L^p([0,\tau];\mathbb{R}^n)$ for $\tau \in [0,T]$. For $x(\cdot),x^\prime (\cdot) \in L^p([0,\tau];\mathbb{R}^n)$, by Lemma \ref{Lemma_A_2} and Assumption \ref{Assumption_2_1}, and using the same technique as (\ref{eq_b_2}) and (\ref{eq_b_3}), it follows that
\begin{align*}
& \Bigl \|( \mathcal{F}[x(\cdot)] - \mathcal{F}[x^\prime(\cdot)])(\cdot) \Bigr \|_{L^p([0,\tau];\mathbb{R}^n)} \\
& \leq \Bigl ( \Bigl ( \frac{\tau^{1-(1+\beta)(1-\alpha)}}{1-(1+\beta)(1-\alpha)} \Bigr )^{\frac{1}{1+\beta}} + \tau^{\frac{1}{1+\beta}} \Bigr ) \|K(\cdot)\|_{L^{\frac{pq}{p-q}}([0,T];\mathbb{R})} \|x(\cdot) - x^\prime(\cdot) \|_{L^p([0,\tau];\mathbb{R}^n)}.
\end{align*}
Take $\tau \in (0,T]$, independent of $x_0$, such that $\Bigl ( \Bigl ( \frac{\tau^{1-(1+\beta)(1-\alpha)}}{1-(1+\beta)(1-\alpha)} \Bigr )^{\frac{1}{1+\beta}} + \tau^{\frac{1}{1+\beta}} \Bigr ) \|K(\cdot)\|_{L^{\frac{pq}{p-q}}([0,T];\mathbb{R})} < 1$. Then the mapping $\mathcal{F}[x(\cdot)]: L^p([0,\tau];\mathbb{R}^n) \rightarrow L^p([0,\tau];\mathbb{R}^n)$ is contraction. Hence, in view of the contraction mapping theorem, (\ref{eq_b_1}) admits a unique solution on $[0,\tau]$ in $L^p([0,\tau];\mathbb{R}^n)$.

For $[\tau,2\tau]$, consider,
\begin{align*}	
\mathcal{F}[y(\cdot)](t) & := x_0(t) + \int_0^{\tau} \frac{f(t,s,x(s),u(s))}{(t-s)^{1-\alpha}} \dd s + \int_0^{\tau} g(t,s,x(s),u(s)) \dd s \\
&~~~ + \int_{\tau}^{t} \frac{f(t,s,y(s),u(s))}{(t-s)^{1-\alpha}} \dd s + \int_{\tau}^{t} g(t,s,y(s),u(s)) \dd s,~ \textrm{a.e.}~ t \in [\tau,2\tau].
\end{align*}
Note that by Lemma \ref{Lemma_A_2} and (\ref{eq_b_3}), we have
\begin{align*}
& \bigl \|\mathcal{F}[y(\cdot)] (\cdot) \bigr \|_{L^p([\tau,2\tau];\mathbb{R}^n)}  \\
& \leq \|x_0(\cdot) \|_{p} + C \Bigl [  \|K_0 (\cdot) \|_{L^q([0,T];\mathbb{R})}	 + \|K (\cdot) \|_{L^{\frac{pq}{p-q}}([0,T];\mathbb{R})} \Bigl ( \| x(\cdot) \|_{L^p([0,\tau];\mathbb{R}^n)}  \\
&~~~ + \|\rho(u(\cdot),u_0) \|_{L^p([0, \tau];\mathbb{R})} \Bigr ) + \|K (\cdot) \|_{L^{\frac{pq}{p-q}}([0,T];\mathbb{R})} \Bigl ( \| y(\cdot) \|_{L^p([\tau, 2 \tau];\mathbb{R}^n)} + \|\rho(u(\cdot),u_0) \|_{L^p([\tau, 2 \tau];\mathbb{R})} \Bigr )  \Bigr ],
\end{align*}
which shows that $\mathcal{F}[y(\cdot)]:   L^p([\tau,2\tau];\mathbb{R}^n) \rightarrow L^p([\tau,2\tau];\mathbb{R}^n)$. Moreover, by a similar argument, it follows that for any $y(\cdot),y^\prime (\cdot) \in L^p([\tau,2\tau];\mathbb{R}^n)$,
\begin{align*}
& \bigl \| (\mathcal{F}[y(\cdot)] - \mathcal{F}[y^\prime(\cdot)] )(\cdot)\bigr \|_{L^p([\tau,2\tau];\mathbb{R}^n)} \\
& \leq \Bigl ( \Bigl ( \frac{\tau^{1-(1+\beta)(1-\alpha)}}{1-(1+\beta)(1-\alpha)} \Bigr )^{\frac{1}{1+\beta}} + \tau^{\frac{1}{1+\beta}} \Bigr ) \|K(\cdot)\|_{L^{\frac{pq}{p-q}}([0,T];\mathbb{R})} \|y(\cdot) - y^\prime(\cdot) \|_{L^p([\tau, 2 \tau];\mathbb{R}^n)}.	
\end{align*}
As before, we have $\Bigl ( \Bigl ( \frac{\tau^{1-(1+\beta)(1-\alpha)}}{1-(1+\beta)(1-\alpha)} \Bigr )^{\frac{1}{1+\beta}} + \tau^{\frac{1}{1+\beta}} \Bigr ) \|K(\cdot)\|_{L^{\frac{pq}{p-q}}([0,T];\mathbb{R})} < 1$. Hence, (\ref{eq_b_1}) admits a unique solution on $[\tau,2 \tau]$ in $L^p([\tau,2 \tau];\mathbb{R}^n)$. By induction, we are able to prove the existence and uniqueness of the solution for (\ref{eq_b_1}) on $[0, \tau], [\tau,2 \tau], \ldots, 
[\lfloor \frac{T}{\tau} \rfloor \tau ,T]$. This shows the existence and uniqueness of the solution for (\ref{eq_b_1}) on $[0,T]$ in  $L^p([0,T];\mathbb{R}^n)$. 

We now prove the estimates in (\ref{eq_b_1_1_1_2}) and (\ref{eq_b_1_1_1_3}). Let $z(\cdot) := |x(\cdot)	- x^\prime(\cdot) |_{\mathbb{R}^n}$, where $x(\cdot) := x(\cdot;x_0,u)$ and $x^\prime(\cdot) := x(\cdot;x_0^\prime,u^\prime)$. Then
\begin{align*}
z(t)  
& \leq b(t) +  \int_0^t \frac{ K(s) z(s) }{(t-s)^{1-\alpha}} \dd s   + \int_0^t K(s) z(s) \dd s,~ \textrm{a.e.}~ t \in [0,T],
\end{align*}
where
\begin{align*}
b(t) & := |x_0(t) - x_0^\prime(t)|_{\mathbb{R}^n} + \int_0^t \frac{|f(t,s,x(s),u^\prime(s))-f(t,s,x(s),u(s))|}{(t-s)^{1-\alpha}} \dd s \\
&~~~ + \int_0^t |g(t,s,x(s),u^\prime(s))-g(t,s,x(s),u(s))| \dd s,~ \textrm{a.e.}~ t \in [0,T].
\end{align*}
Note that $z(\cdot),b(\cdot) \in L^p([0,T];\mathbb{R})$ and $K(\cdot) \in L^{\frac{1}{\alpha}+}([0,T];\mathbb{R})$. We replace $p$ by $q$ in Lemma \ref{Lemma_A_5}. Recall that the $L^p$-spaces of this paper are induced by the finite measure on $([0,T],\mathcal{B}([0,T]))$. Then as $\frac{1}{q} < \frac{1}{p} + \alpha < 1$, we may increase $q$ enough to get $K (\cdot) \in L^{q}([0,T];\mathbb{R}) \subset L^{\frac{1}{\alpha}+}([0,T];\mathbb{R})$ and $z(\cdot),b(\cdot) \in L^p([0,T];\mathbb{R}) \subset L^{\frac{q}{q-1}}([0,T];\mathbb{R})$. By Lemma \ref{Lemma_A_5}, it follows that
\begin{align*}	
z(t) = |x(t)	- x^\prime(t) |_{\mathbb{R}^n} \leq  b(t) + C \int_0^t \frac{K(s) b(s)}{(t-s)^{1-\alpha}} \dd s + C \int_0^t K(s) b(s) \dd s,~ \textrm{a.e.}~ t \in [0,T].
\end{align*}
Hence, similar to (\ref{eq_b_2}), 
\begin{align*}
& \|x(\cdot;x_0,u)	- x(\cdot;x_0^\prime,u^\prime)	\|_{p} 
 \leq C \Bigl ( \|b(\cdot)\|_{p} + \|K(\cdot)\|_{L^{\frac{pq}{p-q}}([0,T];\mathbb{R})}\|b(\cdot)\|_{p} \Bigr ) \leq C \|b(\cdot)\|_{p}.
\end{align*}
This shows the estimate in (\ref{eq_b_1_1_1_3}). The estimate in (\ref{eq_b_1_1_1_2}) can be shown in a similar way.

\subsubsection*{Case II: $1 < p \leq \frac{1}{1-\alpha}$} 

This case implies $1-\alpha \leq \frac{1}{p} < 1$, $\frac{1}{\alpha} \leq \frac{p}{p-1}$, and $\frac{p}{1+\alpha p} \leq 1$. Moreover, for $\beta \in (0,p-1)$ (equivalent to $1 + \beta  \in (1,p)$),  we have $1-\alpha \leq \frac{1}{p} < \frac{1}{1+\beta}$. Hence,  $K_0 (\cdot) \in L^{1+}([0,T];\mathbb{R})$ and $K (\cdot)  \in L^{\frac{p}{p-1}+}([0,T];\mathbb{R})$. Then since $\frac{1}{p} + 1 = \frac{1}{q} + \frac{1}{1+\beta}$, we observe that $\frac{1}{p}	< \frac{1}{q} = \frac{1}{p} + 1 - \frac{1}{1+\beta} \nearrow 1$ and $\frac{p-q}{pq} = \frac{1}{q} - \frac{1}{p} = 1 - \frac{1}{1+\beta} \nearrow  \frac{p-1}{p}$ as $\beta \nearrow p-1$.

As $K_0(\cdot) \in L^{1+}([0,T];\mathbb{R})$ and $K(\cdot) \in L^{\frac{p}{p-1}+}([0,T];\mathbb{R})$, we are able to choose $\beta$ close enough to $p-1$ to get $q > 1$ and $\frac{p-q}{pq} < \frac{p-1}{p}$, which implies $K_0(\cdot) \in L^{q}([0,T];\mathbb{R})$ and $K(\cdot) \in L^{\frac{pq}{p-q}}([0,T];\mathbb{R})$. We replace $p$ by $q$ in Lemma \ref{Lemma_A_5}. Since $p \in [1,\frac{1}{1-\alpha}]$, choose $q$ to get $q > \frac{p}{p-1} \geq \frac{1}{\alpha}$, which implies $K(\cdot) \in L^{q}([0,T];\mathbb{R}) \subset L^{\frac{p}{p-1}+}([0,T];\mathbb{R})$ and $z(\cdot),b(\cdot) \in L^p([0,T];\mathbb{R}) \subset L^{\frac{q}{q-1}}([0,T];\mathbb{R})$. Then the technique for Case I can be applied to prove Case II.

\subsubsection*{Case III: $p=1$} 

We have $K_0(\cdot) \in L^{1+}([0,T];\mathbb{R})$ and $K(\cdot) \in L^{\infty}([0,T];\mathbb{R})$. Choose $\beta = 0$ and use (\ref{eq_b_2}) to get 
\begin{align*}
\bigl \|\mathcal{F}[x(\cdot)](\cdot) \bigr \|_{L^1([0,\tau];\mathbb{R}^n)} & \leq \|x_0(\cdot)\|_{p} +  \frac{\tau^{\alpha}}{\alpha} \Bigl [  \|K_0(\cdot)\|_{L^1([0,T];\mathbb{R})}	 \\
&~~~~~~~ + \|K(\cdot)\|_{L^{\infty}([0,T];\mathbb{R})}( \| x(\cdot) \|_{L^1([0,\tau];\mathbb{R}^n)} + \|\rho(u(\cdot),u_0)\|_{L^1([0,\tau];\mathbb{R})}) \Bigr ].
\end{align*}
Then the rest of the proof is similar to that for Case I. This completes the proof of the theorem.
\end{proof}

\begin{remark}\label{Remark_B_2}
	The integrability of $K_0$ and $K$ in Assumption \ref{Assumption_B_1} is crucial in the proof of Lemma \ref{Lemma_B_1}. Comparing between Cases I and II, we see that $(x_0(\cdot),u(\cdot)) \in L^p([0,T];\mathbb{R}^n) \times \mathcal{U}^p[0,T]$ has weaker integrability in Case II. Hence, we need stronger integrability of $K$ from $K(\cdot) \in L^{\frac{1}{\alpha}+}([0,T];\mathbb{R})$ to $K(\cdot) \in L^{\frac{p}{p-1}+}([0,T];\mathbb{R})$, and $K_0$ from $K_0(\cdot) \in L^{\frac{p}{1+\alpha p} + } ([0,T];\mathbb{R})$ to $K_0(\cdot) \in L^{1 + } ([0,T];\mathbb{R})$  (note that in Case II, $\frac{1}{\alpha} \leq \frac{p}{p-1}$ and $\frac{p}{1+\alpha p} \leq 1$). Notice that for Case III, by the weakest integrability of $(x_0(\cdot),u(\cdot)) \in L^p([0,T];\mathbb{R}^n) \times \mathcal{U}^p[0,T]$, we need the essential boundedness of $K$, i.e., the strongest integrability condition for $K$. Finally, as the proof relies on the contraction mapping argument, the solution of (\ref{eq_b_1}) can be constructed via the standard Picard iteration algorithm, which is applied to Examples \ref{Example_1} and \ref{Example_2} in Section \ref{Section_5}.
\end{remark}

We state the continuity of the solution under the stronger assumption (see Remark \ref{Remark_B_1}).
\begin{lemma}\label{Lemma_B_2}
Let Assumption \ref{Assumption_2_1} hold and $x_0(\cdot) \in C([0,T];\mathbb{R}^n)$. Then (\ref{eq_b_1}) admits a unique solution in $C([0,T];\mathbb{R}^n)$. 
\end{lemma}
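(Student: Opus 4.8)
The plan is to bootstrap the $L^p$ well-posedness of Lemma \ref{Lemma_B_1} up to continuity. Since $x_0(\cdot) \in C([0,T];\mathbb{R}^n) \subset L^p([0,T];\mathbb{R}^n)$ and, by Remark \ref{Remark_B_1}, Assumption \ref{Assumption_2_1} implies Assumption \ref{Assumption_B_1} when $p > \frac{1}{\alpha}$, Lemma \ref{Lemma_B_1} already furnishes a unique solution $x(\cdot) := x(\cdot;x_0,u) \in L^p([0,T];\mathbb{R}^n)$ of (\ref{eq_b_1}) together with its estimate. It therefore remains only to show that this $L^p$ solution admits a continuous representative; uniqueness in $C([0,T];\mathbb{R}^n)$ is then inherited from uniqueness in $L^p([0,T];\mathbb{R}^n)$ through the embedding $C([0,T];\mathbb{R}^n) \subset L^p([0,T];\mathbb{R}^n)$.

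The idea is to show that each integral term on the right-hand side of (\ref{eq_b_1}), evaluated along the solution, is continuous in $t$, and then add back the continuous $x_0$. To this end I would set
\[
\psi(t,s) := f(t,s,x(s),u(s)), \qquad \hat{\psi}(t,s) := g(t,s,x(s),u(s)),
\]
and verify the hypotheses of Lemmas \ref{Lemma_A_3} and \ref{Lemma_A_4}. The growth bound of Remark \ref{Remark_B_1}(i) supplies the integrable majorant $|\psi(t,s)| + |\hat{\psi}(t,s)| \leq K_0(s) + K(s)(|x(s)| + \rho(u(s),u_0))$ (playing the role of the reference profile $\psi(0,\cdot)$), while the time-regularity (ii) of Assumption \ref{Assumption_2_1} yields the modulus-of-continuity condition $|\psi(t,s) - \psi(t',s)| + |\hat{\psi}(t,s) - \hat{\psi}(t',s)| \leq K\omega(|t-t'|)(1 + |x(s)|)$, so the required bound holds with $\psi'(s) = K(1 + |x(s)|) \in L^p([0,T];\mathbb{R})$.

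The key integrability check is to produce an exponent $\tilde{p} > \frac{1}{\alpha}$ for which these data lie in $L^{\tilde{p}}$, so that the continuity (``furthermore'') clauses of Lemmas \ref{Lemma_A_3}--\ref{Lemma_A_4} genuinely apply. Since $x(\cdot) \in L^p([0,T];\mathbb{R}^n)$ and $K(\cdot) \in L^{\frac{p}{\alpha p - 1}+}([0,T];\mathbb{R})$, Hölder's inequality (with $\frac{\alpha p - 1}{p} + \frac{1}{p} = \alpha$, exactly as in the proof of Lemma \ref{Lemma_4_4_2342341234234}) gives $K(\cdot)|x(\cdot)| \in L^{\frac{1}{\alpha}+}([0,T];\mathbb{R})$; together with $K_0(\cdot) \in L^{\frac{1}{\alpha}+}$, with $K(\cdot) \in L^{\frac{p}{\alpha p - 1}+} \subset L^{\frac{1}{\alpha}+}$ (note $\frac{p}{\alpha p - 1} \geq \frac{1}{\alpha}$), and with $\rho(u(\cdot),u_0) \in L^p$, the majorant belongs to $L^r$ for some $r > \frac{1}{\alpha}$. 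Choosing $\tilde{p} := \min\{r,p\} > \frac{1}{\alpha}$ puts both the majorant and $\psi'(\cdot) \in L^p$ into $L^{\tilde{p}}$, so Lemma \ref{Lemma_A_3} shows $t \mapsto \int_0^t \frac{f(t,s,x(s),u(s))}{(t-s)^{1-\alpha}}\,\dd s$ is continuous on $[0,T]$ and Lemma \ref{Lemma_A_4} shows $t \mapsto \int_0^t g(t,s,x(s),u(s))\,\dd s$ is continuous on $[0,T]$. Adding the continuous $x_0(\cdot)$, the right-hand side of (\ref{eq_b_1}) is continuous in $t$ and agrees almost everywhere with $x(\cdot)$, so $x(\cdot)$ has a continuous representative, which is the unique solution in $C([0,T];\mathbb{R}^n)$. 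The main obstacle I anticipate is precisely this integrability bookkeeping: the singular kernel forces the data into the borderline space $L^{\frac{1}{\alpha}+}$, and one must check that the threshold $p > \frac{1}{\alpha}$ leaves enough room to select $\tilde{p} > \frac{1}{\alpha}$ with all data still in $L^{\tilde{p}}$, so that the continuity parts of Lemmas \ref{Lemma_A_3} and \ref{Lemma_A_4}—rather than merely their $L^p$ parts—are available.
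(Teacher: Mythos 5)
Your proof is correct and takes essentially the same route as the paper's: both obtain the unique $L^p$ solution from Lemma \ref{Lemma_B_1} (via Remark \ref{Remark_B_1}), use H\"older's inequality to place the integrand majorant $K_0(\cdot)+K(\cdot)(|x(\cdot)|+\rho(u(\cdot),u_0))$ in $L^q$ for some $q>\frac{1}{\alpha}$, and then invoke the continuity clauses of Lemmas \ref{Lemma_A_3} and \ref{Lemma_A_4} together with Assumption \ref{Assumption_2_1} to conclude that the right-hand side of (\ref{eq_b_1}) is continuous, hence the $L^p$ solution has a continuous representative. The only cosmetic difference is that the paper fixes the exponent explicitly as $q=\frac{sp}{s+p}$ with $s>\frac{p}{\alpha p-1}$, whereas you take $\tilde{p}=\min\{r,p\}$; the underlying H\"older computation is identical.
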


\begin{proof}
Based on Lemma \ref{Lemma_B_1} and Remark \ref{Remark_B_1}, (\ref{eq_b_1}) admits a unique solution in $L^p([0,T];\mathbb{R}^n)$.  Notice that under Assumption \ref{Assumption_2_1},  $K_0(\cdot) \in L^{\frac{1}{\alpha} + } ([0,T];\mathbb{R})$ and $K(\cdot) \in L^{\frac{p}{\alpha p - 1} + } ([0,T];\mathbb{R})$. Let $q = \frac{sp}{s+p}$, where $s > \frac{p}{\alpha p - 1}$. We observe $K(\cdot) \in L^s([0,T];\mathbb{R}^n)$. Since $s = \frac{pq}{p-q}$, we have $\frac{1}{q} = \frac{s+p}{sp} = \frac{1}{p} + \frac{1}{s} > \frac{1}{p}$ and $\frac{pq}{p-q} > \frac{p}{\alpha p - 1}	\Rightarrow \frac{p-q}{pq} < \frac{\alpha p - 1}{p} \Rightarrow  \alpha > \frac{1}{q}$. This implies $ \frac{1}{p} < \frac{1}{q} < \alpha$, i.e., $p > q > \frac{1}{\alpha}$. 

Note that $K_0(\cdot) \in L^q([0,T];\mathbb{R})$ and 
\begin{align}
\label{eq_b_6}
|f(t,s,x(s),u(s))| + |g(t,s,x(s),u(s))| & \leq K_0(s) + K(s)(|x(s)|_{\mathbb{R}^n} + |\rho(u(s),u_0)|) =: \overline{\psi}(s).
\end{align}
Consequently, using the H\"older's inequality, we get
\begin{align*}
& \Bigl ( 	\int_0^T |K(s)|^q(|x(s)|_{\mathbb{R}^n} + \rho(u(s),u_0))^q \dd s   \Bigr)^{\frac{1}{q}}  \\
& \leq \|K(\cdot)\|_{L^{ \frac{pq}{p-q} }([0,T];\mathbb{R})}  ( \|x (\cdot) \|_{p} + \| \rho(u(\cdot),u_0)\|_{L^p([0,T];\mathbb{R})}  ) < \infty.
\end{align*}
This implies $\overline{\psi}$ defined in (\ref{eq_b_6}) holds $\overline{\psi}(\cdot) \in L^q([0,T];\mathbb{R})$. As $q > \frac{1}{\alpha}$, the continuity of (\ref{eq_b_1}) follows from Lemmas \ref{Lemma_A_3} and \ref{Lemma_A_4}, together with Assumption \ref{Assumption_2_1}. This completes the proof.
\end{proof}

We study linear Volterra integral equations having singular and nonsingular kernels. For $\alpha \in (0,1)$ and $x_0(\cdot) \in L^p([0,T];\mathbb{R}^n)$, consider
\begin{align}
\label{eq_b_34543534234234}
x(t) = x_0(t) + \int_{0}^t \frac{F(t,s)x(s)}{(t-s)^{1-\alpha}} \dd s + \int_0^t H(t,s) x(s) \dd s,~  \textrm{a.e.}~ t \in [0,T],
\end{align}
where $F,G:\Delta \rightarrow \mathbb{R}^{n \times n}$ satisfy $F(\cdot,\cdot), H(\cdot,\cdot)  \in L^{\infty}(\Delta;\mathbb{R}^{n \times n})$.

\begin{lemma}\label{Lemma_B_5_2323232}
The solution of (\ref{eq_b_34543534234234}) can be written as
\begin{align}
\label{eq_b_34543534234234_1_1_1}
x(t) = x_0(t) + \int_0^t \Psi(t,s) x_0(s) \dd s,~  \textrm{a.e.}~ t \in [0,T],
\end{align}
where $\Psi$ is the state transition equation defined by
\begin{align*}
\Psi(t,s) = \frac{F(t,s)}{(t-s)^{1-\alpha}} + \int_s^t \frac{F(t,\tau) \Psi(\tau,s) }{(t-\tau)^{1-\alpha}} \dd \tau  + H(t,s) + \int_s^t H(t,\tau) \Psi(\tau,s) \dd \tau,~  \textrm{a.e.}~ t \in (s,T].
\end{align*}
\end{lemma}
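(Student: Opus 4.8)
The plan is to recognize $\Psi$ as the \emph{resolvent kernel} of the combined kernel
\[
K(t,s) := \frac{F(t,s)}{(t-s)^{1-\alpha}} + H(t,s),
\]
so that the defining relation for $\Psi$ in the statement reads $\Psi(t,s) = K(t,s) + \int_s^t K(t,\tau)\Psi(\tau,s)\dd\tau$, the classical resolvent equation for the linear Volterra equation $x(t) = x_0(t) + \int_0^t K(t,s)x(s)\dd s$. Rather than constructing $\Psi$ from scratch, I would first secure the existence of $\Psi$, then verify directly by substitution that the function defined in (\ref{eq_b_34543534234234_1_1_1}) solves (\ref{eq_b_34543534234234}), and finally invoke the uniqueness part of Lemma \ref{Lemma_B_1} to conclude that this is \emph{the} solution.

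First I would establish that $\Psi$ is well-defined. For each fixed $s \in [0,T)$, the map $t \mapsto \Psi(t,s)$ satisfies on $[s,T]$ a linear Volterra integral equation with singular and nonsingular kernels of exactly the form treated in Lemma \ref{Lemma_B_1}, with Lipschitz bound $K(\cdot) \equiv \max\{\|F\|_{L^\infty(\Delta)},\|H\|_{L^\infty(\Delta)}\} \in L^\infty$, vanishing $K_0$, and inhomogeneous term $y_0(t) := \frac{F(t,s)}{(t-s)^{1-\alpha}} + H(t,s)$. Since $F,H$ are bounded and $(t-s)^{-(1-\alpha)} \in L^{p_0}([s,T];\mathbb{R})$ for every $p_0 < \frac{1}{1-\alpha}$, we have $y_0(\cdot) \in L^{p_0}([s,T];\mathbb{R}^{n\times n})$ for such $p_0$; hence Lemma \ref{Lemma_B_1} (its contraction argument applies verbatim on the subinterval $[s,T]$) yields a unique $\Psi(\cdot,s) \in L^{p_0}([s,T];\mathbb{R}^{n\times n})$, with joint measurability of $(t,s)\mapsto\Psi(t,s)$ following from the Picard construction and a uniform-in-$s$ bound on $\|\Psi(\cdot,s)\|_{p_0}$ coming from the estimate (\ref{eq_b_1_1_1_2}).

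Next I would substitute the candidate $x(t) = x_0(t) + \int_0^t \Psi(t,s)x_0(s)\dd s$ into the right-hand side of (\ref{eq_b_34543534234234}). Expanding the two kernel integrals produces, besides $x_0(t) + \int_0^t K(t,s)x_0(s)\dd s$, two double integrals $\int_0^t \int_0^s \frac{F(t,s)}{(t-s)^{1-\alpha}}\Psi(s,\tau)x_0(\tau)\dd\tau\dd s$ and $\int_0^t \int_0^s H(t,s)\Psi(s,\tau)x_0(\tau)\dd\tau\dd s$. Applying Fubini's theorem to interchange the order of integration over the triangle $\{0\le\tau\le s\le t\}$, rewritten as $\{0\le\tau\le t,\ \tau\le s\le t\}$, collects the coefficient of $x_0(\tau)$ into
\[
\frac{F(t,\tau)}{(t-\tau)^{1-\alpha}} + H(t,\tau) + \int_\tau^t \frac{F(t,s)\Psi(s,\tau)}{(t-s)^{1-\alpha}}\dd s + \int_\tau^t H(t,s)\Psi(s,\tau)\dd s,
\]
which is precisely $\Psi(t,\tau)$ by its defining equation. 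Thus the right-hand side reduces to $x_0(t) + \int_0^t \Psi(t,\tau)x_0(\tau)\dd\tau = x(t)$, so (\ref{eq_b_34543534234234_1_1_1}) solves (\ref{eq_b_34543534234234}); uniqueness from Lemma \ref{Lemma_B_1} then identifies it as the solution.

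The main obstacle is the rigorous justification of Fubini's theorem, because after substitution the inner integral $\int_\tau^t \frac{F(t,s)}{(t-s)^{1-\alpha}}\Psi(s,\tau)\dd s$ is a convolution of the singular kernel $(t-s)^{-(1-\alpha)}$ against $\Psi(s,\tau)$, and $\Psi(s,\tau)$ itself inherits an $(s-\tau)^{-(1-\alpha)}$-type singularity from its forcing $y_0$. Absolute integrability of the full double integrals over $\Delta$ therefore reduces to controlling the product of two integrable singular kernels; since $\alpha \in (0,1)$, the Beta-integral identity $\int_\tau^t (t-s)^{-(1-\alpha)}(s-\tau)^{-(1-\alpha)}\dd s = B(\alpha,\alpha)(t-\tau)^{2\alpha-1}$ shows the convolution is again integrable, with a strictly milder singularity of order $1-2\alpha$. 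This is exactly the convolution estimate underlying Lemmas \ref{Lemma_A_2} and \ref{Lemma_A_5}, and once it is combined with the uniform $L^{p_0}$-bound on $\Psi(\cdot,s)$ and the hypothesis $x_0(\cdot) \in L^p$, Fubini applies and the computation above closes.
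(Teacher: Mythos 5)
Your proposal is correct and follows essentially the same route as the paper's proof: substitute the resolvent representation into the kernel integrals of (\ref{eq_b_34543534234234}), interchange the order of integration over the triangle so that the defining equation for $\Psi$ appears as the coefficient of $x_0$, and conclude via the uniqueness provided by Lemma \ref{Lemma_B_1}. The only difference is that you explicitly justify two points the paper leaves implicit --- the existence and measurability of $\Psi$ itself (by applying Lemma \ref{Lemma_B_1} for each fixed $s$) and the absolute integrability needed for Fubini (via the Beta-integral convolution estimate) --- which strengthens rather than changes the argument.
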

\begin{proof}
The well-posedness of (\ref{eq_b_34543534234234}) follows from Lemma \ref{Lemma_B_1}. From (\ref{eq_b_34543534234234_1_1_1}), it follows that
\begin{align*}
& \int_{0}^t \frac{F(t,s)}{(t-s)^{1-\alpha}} x(s) \dd s + \int_0^t H(t,s) x(s) \dd s \\
& = \int_{0}^t \frac{F(t,s)}{(t-s)^{1-\alpha}}  \Bigl [x_0(s) + \int_0^s \Psi(s,\tau) x_0(\tau) \dd \tau \Bigr ] \dd s  + \int_0^t H(t,s) \Bigl [x_0(s) + \int_0^s \Psi(s,\tau) x_0(\tau) \dd \tau \Bigr ] \dd s \\
& = \int_0^t \Bigl [ \frac{F(t,s) }{(t-s)^{1-\alpha}} + \int_s^t \frac{F(t,\tau) \Psi(\tau,s) }{(t-\tau)^{1-\alpha} } \dd \tau + H(t,s) + \int_s^t H(t,\tau) \Psi(\tau,s) \dd \tau \Bigr ] x_0(s) \dd s \\
& = \int_0^t \Psi(t,s) x_0(s) \dd s = x(t) - x_0(t),
\end{align*}
which completes the proof.
\end{proof}

Consider the following $\mathbb{R}^n$-valued backward Volterra integral equation having singular and nonsingular kernels, which covers the adjoint equation in Theorem \ref{Theorem_3_1}:
\begin{align}
\label{eq_b_7}
z(t) & = z_0(t) + \int_t^T \frac{F(r,t)^\top}{(r-t)^{1-\alpha}} z(r) \dd r + \int_t^T H(r,t)^\top z(r) \dd r \\
&~~~ + \sum_{i=1}^m C_i(t)^\top \frac{\dd \theta_i(t)}{\dd t} + D(t)^\top,~  \textrm{a.e.}~ t \in [0,T]. \nonumber
\end{align} 

\begin{assumption}\label{Assumption_6}
\begin{enumerate}[(i)]
		\item $z_0(\cdot) \in C([0,T];\mathbb{R}^n)$, $\theta (\cdot)  = (\theta_1(\cdot),\ldots,\theta_m(\cdot)) \in \textsc{NBV}([0,T];\mathbb{R}^m)$, and $\dd \theta_i \ll \dd t$, i.e., $\dd \theta_i$ is absolutely continuous with respect to $\dd t$ for $i=1,\ldots,m$;
		\item $F,H:\Delta \rightarrow \mathbb{R}^{n \times n}$ and $C_i,D:[0,T] \rightarrow \mathbb{R}^n$, $i=1,\ldots,m$, satisfy $F(\cdot,\cdot),H(\cdot,\cdot) \in L^{\infty}(\Delta;\mathbb{R}^{n \times n})$ and $C_i(\cdot),D(\cdot) \in L^{\infty}([0,T];\mathbb{R}^{n})$, $i=1,\ldots,m$. 
\end{enumerate}
\end{assumption}

\begin{lemma}\label{Lemma_B_5}
Let Assumption \ref{Assumption_6} hold. Assume that $p \geq 1$ and $\alpha \in (0,1)$. Then for any $z_0(\cdot) \in C([0,T];\mathbb{R}^n)$, (\ref{eq_b_7}) admits a unique solution in $L^p([0,T];\mathbb{R}^n)$.
\end{lemma}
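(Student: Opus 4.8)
The plan is to recognize (\ref{eq_b_7}) as an inhomogeneous \emph{linear} Volterra equation whose only nonstandard features, relative to (\ref{eq_b_1}), are that it runs backward in time and carries a measure-driven forcing term. First I would absorb all driving terms into a single inhomogeneity. By Assumption \ref{Assumption_6}(i) and the Radon--Nikodym theorem (exactly as in Remark \ref{Remark_3_3}), write $\frac{\dd \theta_i(t)}{\dd t} = \Theta_i(t)$ with $\Theta_i(\cdot) \in L^1([0,T];\mathbb{R})$, and set
\[
\zeta(t) := z_0(t) + \sum_{i=1}^m C_i(t)^\top \Theta_i(t) + D(t)^\top ,
\]
so that (\ref{eq_b_7}) reads $z(t) = \zeta(t) + \int_t^T \frac{F(r,t)^\top}{(r-t)^{1-\alpha}} z(r) \dd r + \int_t^T H(r,t)^\top z(r) \dd r$.

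Next I would reverse time. Putting $\overline{z}(\tau) := z(T-\tau)$ and $\overline{\zeta}(\tau) := \zeta(T-\tau)$, and substituting $r = T-\sigma$, turns the backward equation into the forward equation
\[
\overline{z}(\tau) = \overline{\zeta}(\tau) + \int_0^\tau \frac{F(T-\sigma,T-\tau)^\top}{(\tau-\sigma)^{1-\alpha}} \overline{z}(\sigma) \dd \sigma + \int_0^\tau H(T-\sigma,T-\tau)^\top \overline{z}(\sigma) \dd \sigma ,
\]
which is precisely of the form (\ref{eq_b_1}) with data $\overline{\zeta}$ and control-independent generators $\tilde{f}(\tau,\sigma,x) = F(T-\sigma,T-\tau)^\top x$ and $\tilde{g}(\tau,\sigma,x) = H(T-\sigma,T-\tau)^\top x$; note that $(T-\sigma,T-\tau) \in \Delta$ whenever $\sigma \leq \tau$, so these are well defined. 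Since $F,H \in L^\infty(\Delta;\mathbb{R}^{n \times n})$, the generators are globally Lipschitz in $x$ with constant $K(s) \equiv \|F\|_{L^\infty(\Delta)} + \|H\|_{L^\infty(\Delta)}$, which lies in every $L^r([0,T];\mathbb{R})$, and $\tilde{f}(\tau,\sigma,0) = \tilde{g}(\tau,\sigma,0) = 0$, so that $K_0 \equiv 0$. Hence Assumption \ref{Assumption_B_1} holds for every $p$, and Lemma \ref{Lemma_B_1} yields a unique $\overline{z}(\cdot) \in L^p([0,T];\mathbb{R}^n)$; undoing the reversal, $z(t) := \overline{z}(T-t)$ is then the unique $L^p$ solution of (\ref{eq_b_7}), uniqueness being inherited directly from Lemma \ref{Lemma_B_1}.

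The hard part is not the contraction argument, which Lemma \ref{Lemma_B_1} delivers once the kernels are seen to be bounded, but rather verifying that the inhomogeneity $\overline{\zeta}$ genuinely lies in $L^p$. The pieces $z_0 \in C([0,T];\mathbb{R}^n) \subset L^p$ and $D(\cdot) \in L^\infty \subset L^p$ are harmless, but the measure-driven contribution $\sum_i C_i^\top \Theta_i$ is a priori only integrable ($\Theta_i \in L^1$), and $L^1 \not\subset L^p$ for $p>1$ on the finite interval $[0,T]$. I would therefore isolate this point using the resolvent representation $\overline{z} = \overline{\zeta} + \int_0^\cdot \Psi(\cdot,\sigma)\overline{\zeta}(\sigma) \dd \sigma$ of Lemma \ref{Lemma_B_5_2323232}: the Volterra part smooths, since by the fractional-kernel estimates of Lemmas \ref{Lemma_A_3}--\ref{Lemma_A_4} the map $\overline{\zeta} \mapsto \int \Psi \overline{\zeta}$ raises integrability, so the regularity of $z$ is dictated by that of $\zeta$ itself. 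Consequently the $L^p$ conclusion reduces to controlling the driving data in $L^p$, and the delicate step — the one I expect to be the main obstacle — is reconciling the Radon--Nikodym regularity of the $\Theta_i$ with the target exponent $p$, which is exactly what must be argued carefully before Lemma \ref{Lemma_B_1} can be invoked in the full $L^p$ (rather than merely $L^1$) setting.
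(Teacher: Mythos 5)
Your reduction---absorb the measure term via Radon--Nikodym, reverse time, and invoke Lemma \ref{Lemma_B_1} for the resulting forward equation---is structurally sound and is a genuinely different route from the paper's: the paper never reverses time, but instead reruns the contraction argument directly on the backward equation, working on $[T-\tau,T]$, then $[T-2\tau,T-\tau]$, and so on (using Lemma \ref{Lemma_A_2} with $p=q$ and $r=1$ to obtain a contraction factor $\frac{\tau K}{\alpha}+\tau K<1$). Either skeleton would do. The problem is that your proof does not finish: Lemma \ref{Lemma_B_1} requires the inhomogeneity to lie in $L^p([0,T];\mathbb{R}^n)$, and you never establish that $\overline{\zeta}$ does. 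You correctly isolate the offending term $\sum_{i=1}^m C_i^\top\Theta_i$, which on the stated hypotheses is only known to be in $L^1$ (since $\Theta_i\in L^1$ and $C_i\in L^\infty$, and $L^1\not\subset L^p$ for $p>1$); you correctly observe that the resolvent representation of Lemma \ref{Lemma_B_5_2323232} cannot rescue matters (because $z$ differs from $\zeta$ by the smoother integral term, the solution is exactly as integrable as $\zeta$ and no more); and then you stop, declaring that this ``must be argued carefully before Lemma \ref{Lemma_B_1} can be invoked.'' That declaration is the gap: the conclusion of the lemma is precisely the $L^p$ statement, so a proof that leaves the $L^p$ membership of the data unresolved has not proved the lemma. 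Indeed, for $p>1$ and $\Theta_i$ genuinely only $L^1$, your own reasoning shows the solution would be only $L^1$, i.e., the step cannot be closed without importing additional integrability of the $\Theta_i$.

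It is worth saying that you have put your finger on a real soft spot: the paper's own proof disposes of this very point with the single word ``Clearly'' (it asserts $\mathcal{G}[z(\cdot)]:L^p([0,T];\mathbb{R}^n)\rightarrow L^p([0,T];\mathbb{R}^n)$ without comment, even though the fixed forcing $z_0+\sum_i C_i^\top\Theta_i+D^\top$ is, under Assumption \ref{Assumption_6} alone, only $L^1$ when $p>1$). So your diagnosis of where the difficulty sits is sharper than the paper's exposition. But a blind attempt is judged on whether it establishes the statement, and yours, by its own admission, does not: to close it one must either strengthen the hypothesis on $\theta$ (e.g., $\Theta_i\in L^p$), or settle for an $L^1$ conclusion. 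As written, the argument is incomplete at exactly the step on which the stated $L^p$ conclusion depends.
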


\begin{proof}
Note that by Remark \ref{Remark_3_3} and the Radon-Nikodym theorem, there is a unique $\Theta_i(\cdot) \in L^1([0,T];\mathbb{R})$, $i=1,\ldots,m$, such that   $\frac{\dd \theta_i(t)}{\dd t} = \Theta_i(t)$ for $i=1,\ldots,m$. Hence, we may replace $\frac{\dd \theta_i(t)}{\dd t}$ by $\Theta_i(t)$ in (\ref{eq_b_7}). Let us define
\begin{align*}
\mathcal{G}[z(\cdot)](t) & := z_0(t) + \int_t^T \frac{F(r,t)^\top}{(r-t)^{1-\alpha}} z(r) \dd r + \int_t^T H(r,t)^\top z(r) \dd r \\
&~~~ + \sum_{i=1}^m C_i(t)^\top \Theta_i(t) + D(t)^\top,~  \textrm{a.e.}~ t \in [0,T].
\end{align*}
Clearly, $\mathcal{G}[z(\cdot)]: L^p([0,T];\mathbb{R}^n) \rightarrow L^p([0,T];\mathbb{R}^n)$. In addition, for $\tau > 0$, we apply a similar technique of Lemma \ref{Lemma_B_1} (with Lemma \ref{Lemma_A_2} for $p=q$ and $r=1$) to show that
\begin{align*}
\|(\mathcal{G}[z(\cdot)]- \mathcal{G}[z^\prime(\cdot)])(\cdot)\|_{L^p([T-\tau,T];\mathbb{R}^n)} \leq 	\Bigl ( \frac{\tau K}{\alpha} + \tau K \Bigr ) \|z(\cdot) - z^\prime(\cdot)\|_{L^p([T-\tau,T];\mathbb{R}^n)}.
\end{align*}
We may choose $\tau$, independent of $z_0$, such that $\Bigl ( \frac{\tau K}{\alpha} + \tau K \Bigr ) < 1$. Then by the contraction mapping theorem, (\ref{eq_b_7}) admits a unique solution on $[T-\tau,T]$ in $L^p([T-\tau,T];\mathbb{R}^n)$. By induction, we are able to show that (\ref{eq_b_7}) admits a unique solution on $[0,T]$ in $L^p([0,T];\mathbb{R}^n)$. 
We complete the proof.
\end{proof}

\section{Auxiliary Lemmas}\label{Appendix_D}

\begin{lemma}[Corollary 3.9 and page 144 of \cite{Li_Yong_book} or Lemma 3 of \cite{Bourdin_arxiv_2016}]\label{Lemma_D_1}
Assume that $(X,\|\cdot\|_{X})$ is a Banach space. For $\delta \in (0,1)$, define $\mathcal{E}_{\delta} := \{ E \in [0,T]~|~ |E| = \delta T\}$, where $|E|$ denotes the Lebesgue measure of $E$. Suppose that $\phi:\Delta \rightarrow X$ satisfies the properties such that (i) $\|\phi(t,s)\|_{X} \leq \overline{\phi}(s)$ for all $(t,s) \in \Delta$, where $\overline{\phi}(\cdot) \in L^1([0,T];\mathbb{R})$, and (ii) for almost all $s \in [0,T]$, $\phi(\cdot,s):[s,T] \rightarrow X$ is continuous. Then there is an $E_{\delta} \in \mathcal{E}_{\delta}$ such that
\begin{align*}
\sup_{t \in [0,T]} \Biggl | \int_0^t \Bigl ( \frac{1}{\delta} \mathds{1}_{E_{\delta}}(s) - 1 \Bigr ) \phi(t,s) \dd s \Biggr | \leq \delta.
\end{align*}
\end{lemma}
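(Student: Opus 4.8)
The plan is to build the single set $E_\delta$ by a uniform partition of $[0,T]$ together with a Lyapunov-type selection inside each piece, and to control the error through the continuity of $\phi$ in its outer variable. As a first step I would reformulate the quantity to be estimated using the truncation $\tilde\phi(t,s):=\phi(t,s)\mathds{1}_{\{s\le t\}}$, so that $\int_0^t\bigl(\tfrac1\delta\mathds{1}_{E_\delta}(s)-1\bigr)\phi(t,s)\,\dd s=\int_0^T\bigl(\tfrac1\delta\mathds{1}_{E_\delta}(s)-1\bigr)\tilde\phi(t,s)\,\dd s$, confining the $t$-dependence to the integrand. Using (i) as an $L^1$-dominator and (ii) for pointwise continuity, the dominated convergence theorem shows that $t\mapsto\tilde\phi(t,\cdot)$ is continuous from $[0,T]$ into $L^1([0,T];X)$; since $[0,T]$ is compact this map is uniformly continuous, so the modulus $\Omega(h):=\sup_{|t-t'|\le h}\int_0^T\|\tilde\phi(t,s)-\tilde\phi(t',s)\|_X\,\dd s$ satisfies $\Omega(h)\downarrow 0$ as $h\downarrow 0$. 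This is the key regularity that will let me interpolate between finitely many reference times.

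Next I would fix $N$, set $t_l:=lT/N$ and $I_k:=[t_{k-1},t_k]$, and on each $I_k$ produce a set $E_k\subset I_k$ with $|E_k|=\delta|I_k|$ such that $\int_{E_k}\phi(t_l,s)\,\dd s=\delta\int_{I_k}\phi(t_l,s)\,\dd s$ for every index $l\ge k$ (for $l<k$ the constraint is void, since $\tilde\phi(t_l,\cdot)=0$ on $I_k$). When $X$ is finite-dimensional this is immediate from Lyapunov's convexity theorem: the range of the non-atomic vector measure $A\mapsto\bigl(|A|,\,(\int_A\phi(t_l,s)\,\dd s)_{l\ge k}\bigr)$ over measurable $A\subset I_k$ is convex, contains $0$ and its value at $A=I_k$, hence contains the $\delta$-multiple of the latter. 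For a general Banach space I would first approximate the finitely many maps $\phi(t_l,\cdot)$ in $L^1$ by functions valued in a common finite-dimensional subspace, apply Lyapunov there, and carry the approximation error (chosen $\ll\delta/N$) through to the end. Setting $E_\delta:=\bigcup_k E_k$ then gives $|E_\delta|=\delta T$, i.e. $E_\delta\in\mathcal{E}_\delta$.

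Finally I would estimate the target expression. At a grid time $t=t_l$ each interval $I_k$ with $k\le l$ contributes $\tfrac1\delta\int_{E_k}\phi(t_l,\cdot)\,\dd s-\int_{I_k}\phi(t_l,\cdot)\,\dd s=0$ by construction, so the error vanishes at all grid times. For an arbitrary $t\in(t_{l-1},t_l]$ I would write the integral as its (zero) value at $t_{l-1}$ plus $\int_0^T\bigl(\tfrac1\delta\mathds{1}_{E_\delta}-1\bigr)[\tilde\phi(t,s)-\tilde\phi(t_{l-1},s)]\,\dd s$, and bound the remainder via $|\tfrac1\delta\mathds{1}_{E_\delta}(s)-1|\le\tfrac1\delta$ by $\tfrac1\delta\,\Omega(T/N)$. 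Choosing $N$ large enough that $\tfrac1\delta\,\Omega(T/N)\le\delta$ (and then the finite-dimensional approximation fine enough to absorb its error into the same bound) yields $\sup_{t\in[0,T]}\bigl\|\int_0^t(\tfrac1\delta\mathds{1}_{E_\delta}(s)-1)\phi(t,s)\,\dd s\bigr\|_X\le\delta$, as required.

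The main obstacle will be the requirement that a single $E_\delta$ work simultaneously for every $t\in[0,T]$: the averages cannot be matched for all $t$ at once, and the only available regularity is continuity in the outer variable $t$, not in the integration variable $s$. The resolution is precisely the two-scale device above — exact (or nearly exact) matching at the finitely many grid times through Lyapunov's theorem, knit together across grid times by the uniform modulus $\Omega$ — while the passage from finite-dimensional $X$ to an arbitrary Banach space is handled by approximation, since only the bound $\le\delta$, and not an exact identity, is needed.
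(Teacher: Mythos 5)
Your proposal addresses a statement the paper never actually proves: Lemma \ref{Lemma_D_1} is imported as a known result from \cite{Li_Yong_book} (Corollary 3.9, page 144) and \cite{Bourdin_arxiv_2016} (Lemma 3), so there is no internal argument to compare yours against, and a self-contained proof is genuine added content. Judged on its own merits, your proof is correct. The reduction of the $t$-dependence to a uniform modulus is sound: dominated convergence (pointwise convergence in $t$ from hypothesis (ii), domination by $2\overline{\phi}$ from (i), plus absolute continuity of $\int \overline{\phi}$ over the strip of $s$ lying between $t$ and $t'$) gives continuity of $t\mapsto\tilde\phi(t,\cdot)$ into $L^1([0,T];X)$, hence uniform continuity by compactness of $[0,T]$. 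The cell-wise selection is also sound: the vector measure $\nu(A):=\bigl(|A|,(\int_A\phi(t_l,s)\,\dd s)_{l\ge k}\bigr)$ on measurable $A\subset I_k$ is non-atomic (being absolutely continuous with respect to Lebesgue measure) and finite-dimensional valued when $X$ is, so Lyapunov's theorem makes its range convex; since the range contains $\nu(\emptyset)=0$ and $\nu(I_k)$, it contains $\delta\nu(I_k)$, which is exactly the required $E_k$. Your simple-function approximation correctly circumvents the failure of Lyapunov's theorem in infinite dimensions, and the off-grid estimate via $|\tfrac{1}{\delta}\mathds{1}_{E_\delta}(s)-1|\le\tfrac{1}{\delta}$ and $\tfrac{1}{\delta}\Omega(T/N)$ closes the argument.

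Two remarks. First, a bookkeeping point: as written you require $\tfrac{1}{\delta}\Omega(T/N)\le\delta$ and then ask the Lyapunov-approximation error to be ``absorbed into the same bound''; since that inequality may be saturated, you should budget explicitly, e.g.\ choose $N$ with $\tfrac{1}{\delta}\Omega(T/N)\le\delta/2$ and then the total approximation error at most $\delta/2$. This is cosmetic. Second, regarding the route itself: the standard arguments behind the citation share your first step (reduction to finitely many grid times via uniform $L^1$-continuity in the outer variable), but they typically select the set by an elementary explicit construction --- a $\delta$-proportion of each cell of a sufficiently fine partition, which matches cell averages only approximately but uniformly --- thereby avoiding both Lyapunov's convexity theorem and the finite-dimensional approximation layer needed for Banach-valued $\phi$. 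Your version buys exact cancellation at the grid times at the price of heavier machinery; both routes are valid, and yours has the advantage of making the ``error only comes from interpolation in $t$'' structure completely transparent.
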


\begin{lemma}[Lemma 4.2 of \cite{Lin_Yong_SICON_2020}]\label{Lemma_D_2}
Let $\mathcal{E}_{\delta}$ be the set in Lemma \ref{Lemma_D_1}. Assume $\psi:\Delta \rightarrow \mathbb{R}$ holds  the following property:
\begin{align}
\label{eq_d_1}
\begin{cases}
	|\psi(0,s)| \leq \overline{\psi}(s),~ s \in [0,T], \\
	|\psi(t,s) - \psi(t^\prime,s)| \leq \omega(|t-t^\prime|) \overline{\psi}(s),~ (t,s),(t^\prime,s) \in \Delta,
\end{cases}	
\end{align}
where $\overline{\psi} \in L^p([0,T];\mathbb{R})$ with $p > \frac{1}{\alpha}$ and $\omega:[0,\infty) \rightarrow [0,\infty)$ is some modulus of continuity. Then there is an $E_{\delta} \in \mathcal{E}_{\delta}$ such that
\begin{align*}
	\sup_{t \in [0,T]} \Biggl | \int_0^t \Bigl (\frac{1}{\delta} \mathds{1}_{E}(s) - 1 \Bigr ) \frac{\psi(t,s)}{(t-s)^{1-\alpha}} \dd s \Biggr | \leq \delta.
\end{align*}
\end{lemma}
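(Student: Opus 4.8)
The plan is to deduce this singular spike estimate from the non-singular version already available in Lemma \ref{Lemma_D_1} by isolating the diagonal singularity. Write $\Phi(t) := \int_0^t ( \frac{1}{\delta}\mathds{1}_{E_\delta}(s)-1 ) \frac{\psi(t,s)}{(t-s)^{1-\alpha}}\dd s$; the goal is to produce a set $E_\delta \in \mathcal{E}_\delta$ with $\sup_{t\in[0,T]}|\Phi(t)| \le \delta$. First I would fix a cutoff scale $\eta>0$ and a smooth function $\chi_\eta$ with $\chi_\eta(r)=0$ for $r\le\eta$ and $\chi_\eta(r)=1$ for $r\ge 2\eta$, and split the kernel as $\frac{\psi(t,s)}{(t-s)^{1-\alpha}} = \phi^\eta(t,s) + \rho^\eta(t,s)$, where $\phi^\eta(t,s) := \frac{\psi(t,s)}{(t-s)^{1-\alpha}}\chi_\eta(t-s)$ is the regular piece supported away from the diagonal and $\rho^\eta$ carries the singularity on the band $\{0<t-s<2\eta\}$.

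For the regular piece, the hypotheses \eqref{eq_d_1} give $|\phi^\eta(t,s)| \le \eta^{-(1-\alpha)}(1+\omega(T))\overline{\psi}(s)$, which is dominated by an $L^1$ function of $s$, and $\phi^\eta(\cdot,s)$ is continuous on $[s,T]$; hence Lemma \ref{Lemma_D_1} applies to $\phi^\eta$ and yields a set $E_\delta\in\mathcal{E}_\delta$ with $\sup_t|\int_0^t(\frac{1}{\delta}\mathds{1}_{E_\delta}(s)-1)\phi^\eta(t,s)\dd s| \le \delta/2$. It then remains to control the singular remainder $R^\eta(t) := \int_0^t(\frac{1}{\delta}\mathds{1}_{E_\delta}(s)-1)\rho^\eta(t,s)\dd s$. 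Its ``$-1$'' contribution is bounded, uniformly in $t$, by $\int_{(t-2\eta)^+}^t \frac{(1+\omega(T))\overline{\psi}(s)}{(t-s)^{1-\alpha}}\dd s \le C\|\overline{\psi}\|_{L^p([0,T];\mathbb{R})}\,\eta^{\alpha-1/p}$, where the Hölder estimate underlying Lemmas \ref{Lemma_A_2} and \ref{Lemma_A_3} converges precisely because $p>\frac{1}{\alpha}$ forces the exponent $\alpha-\frac{1}{p}>0$; this tends to $0$ as $\eta\downarrow 0$, uniformly in $t$.

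The genuinely delicate term is the spike part of the remainder, $\frac{1}{\delta}\int_{E_\delta} \mathds{1}_{(t-2\eta,t)}(s)\frac{|\psi(t,s)|}{(t-s)^{1-\alpha}}\dd s$, where the factor $\frac{1}{\delta}$ threatens to amplify the kernel near the diagonal; this is the main obstacle. I expect to overcome it by using the set $E_\delta$ in the equidistributed form coming from the Lyapunov/Li--Yong construction behind Lemma \ref{Lemma_D_1} --- a union of one subinterval of relative length $\delta$ in each cell of a partition of mesh $h\ll\eta$, chosen so that the finitely many weights relevant to the construction, including $\overline{\psi}$ itself, are equidistributed. Then on the band $(t-2\eta,t)$ the normalized restriction $\frac{1}{\delta}\mathds{1}_{E_\delta}$ integrates the nonnegative weight $\frac{\overline{\psi}(s)}{(t-s)^{1-\alpha}}$ to within a fixed multiple of its Lebesgue integral, so that the spike part is again $O(\eta^{\alpha-1/p})$ uniformly in $t$. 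Finally I would fix $\eta$ small enough that both singular contributions are $\le\delta/2$, apply Lemma \ref{Lemma_D_1} to $\phi^\eta$ at a mesh $h\ll\eta$ to obtain the matching $E_\delta$, and add the estimates to conclude $\sup_t|\Phi(t)|\le\delta$. The subtlety to watch throughout is the interlocking choice of scales $h\ll\eta$ and the verification that the equidistribution of $E_\delta$ survives against the merely $L^p$ weight $\overline{\psi}$, which is exactly what the inclusion of $\overline{\psi}$ in the finite equidistributed family secures.
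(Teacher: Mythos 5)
The paper itself offers no proof of this lemma: it is imported verbatim as Lemma 4.2 of \cite{Lin_Yong_SICON_2020}, so your attempt must stand on its own. Your architecture is reasonable: the truncation $\phi^\eta(t,s)=\frac{\psi(t,s)}{(t-s)^{1-\alpha}}\chi_\eta(t-s)$ does satisfy the hypotheses of Lemma \ref{Lemma_D_1} (dominated by $\eta^{-(1-\alpha)}(1+\omega(T))\overline{\psi}(\cdot)\in L^1$, continuous in $t$), and your H\"older bound $C\|\overline{\psi}\|_{L^p}\eta^{\alpha-1/p}$ for the ``$-1$'' part of the band is correct, with $p>\frac{1}{\alpha}$ entering exactly as you say. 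The genuine gap is the step you yourself flag as the main obstacle, the spike term $\frac{1}{\delta}\int_{E_\delta\cap(t-2\eta,t)}\frac{|\psi(t,s)|}{(t-s)^{1-\alpha}}\dd s$: the equidistribution you invoke is not available from Lemma \ref{Lemma_D_1} in the form you need. What Lemma \ref{Lemma_D_1} (applied to the extra weight $\overline{\psi}$) actually gives is an \emph{additive} bound, $\frac{1}{\delta}\int_{E_\delta\cap I}\overline{\psi}\,\dd s\leq\int_I\overline{\psi}\,\dd s+2\delta$ for every subinterval $I$, with an error $2\delta$ that does not shrink with $|I|$. Decompose the band into dyadic pieces at distance $2^{-k}\eta$ from $t$ and sum: the kernel multiplies the $k$-th error by $(2^{-k}\eta)^{-(1-\alpha)}$, and $\sum_k\delta(2^{-k}\eta)^{-(1-\alpha)}$ diverges; truncating the dyadic sum at the mesh scale $h$ leaves an error of order $\delta h^{-(1-\alpha)}$. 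Meanwhile the innermost cell, where only H\"older against the raw factor $\frac{1}{\delta}$ is available, contributes $\frac{C}{\delta}\|\overline{\psi}\|_{L^p}h^{\alpha-1/p}$, which forces $h^{\alpha-1/p}\lesssim\delta^2$. These two requirements pull in opposite directions: with $\alpha=\frac12$, $p=4$ one needs $h\sim\delta^8$, and then $\delta h^{-(1-\alpha)}\sim\delta^{-3}\to\infty$. So the claim that $\frac{1}{\delta}\mathds{1}_{E_\delta}$ integrates the singular weight ``to within a fixed multiple of its Lebesgue integral'' cannot be extracted from Lemma \ref{Lemma_D_1} used as a black box, and as written the proof does not close. (A second, smaller, flaw: one sliding subinterval per cell cannot in general equidistribute several weights simultaneously; Lyapunov's convexity theorem requires measurable subsets of each cell.)

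The gap is repairable, but only with an ingredient you do not state. Either (i) re-enter the Li--Yong/Lyapunov construction and impose \emph{exact} per-cell equidistribution, $|A_j|=\delta h$ and $\int_{A_j}\overline{\psi}\,\dd s=\delta\int_{\mathrm{cell}_j}\overline{\psi}\,\dd s$ simultaneously (possible by Lyapunov with measurable $A_j$), together with the checkpoint functions needed for $\phi^\eta$; then cells at distance $\geq h$ from $t$ are controlled by twice their Lebesgue integral, the inner cells by $\frac{C}{\delta}\|\overline{\psi}\|_{L^p}h^{\alpha-1/p}$, and everything closes --- but this is re-proving a strengthened Lemma \ref{Lemma_D_1}, not citing it. Or (ii) keep Lemma \ref{Lemma_D_1} as a black box but apply it to the $\mathbb{R}^2$-valued map $(t,s)\mapsto\bigl(2\phi^\eta(t,s),\,M\overline{\psi}(s)\bigr)$ with $M\gtrsim h^{-(1-\alpha)}$ chosen \emph{after} $h$: the conclusion then yields per-interval equidistribution error $2\delta/M$ for $\overline{\psi}$, which survives the dyadic summation ($C(\delta/M)h^{-(1-\alpha)}\leq\delta/8$), while the first component still gives the $\delta/2$ bound for the regular piece. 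With either repair, and your choices $\eta$ then $h$ then ($M$) then $E_\delta$, the remaining estimates in your sketch are correct and yield the lemma.
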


\end{appendices}

\bibliographystyle{IEEEtranS}
\bibliography{researches_1_NEW.bib}
\end{document}